\theoremstyle{plain}
\newtheorem{theorem}{Theorem}[section]
\newtheorem{lem}[theorem]{Lemma}
\newtheorem{definition}{Definition}[section]
\newtheorem{heuristic}{Heuristic}
\title{On multiclass spatial birth-and-death processes with wireless-type interactions}
\author{Pierre Popineau$^{1}$, François Baccelli$^{1,2}$ \\
Affiliation: $^{1}$ INRIA-ENS, Paris, France
$^{2}$ Telecom Paris, France}
\date{}
\newcommand{\Pk}{\mathcal{P}(K)}
\newcommand{\R}{\mathbb{R}}
\newcommand{\M}{\mathcal{M}}
\newcommand{\N}{\mathbb{N}}
\newcommand{\E}{\mathbb{E}}
\newcommand{\Proba}{\mathbb{P}}
\newcommand{\D}{\mathcal{D}}
\newcommand{\diff}{\mathrm{d}}
\DeclareMathOperator*{\argmax}{arg\,max}
\newcommand{\vertiii}[1]{{\left\vert\kern-0.25ex\left\vert\kern-0.25ex\left\vert #1 
    \right\vert\kern-0.25ex\right\vert\kern-0.25ex\right\vert}}
\begin{document}

\maketitle

\begin{abstract}
	This paper studies a multiclass spatial birth-and-death (SBD) processes on a compact region of the Euclidean plane modeling wireless interactions. In this model, users arrive at a constant rate and leave at a rate function of the interference created by other users in the network. The novelty of this work lies  in the addition of service differentiation, inspired by bandwidth partitioning present in 5G networks: users are allocated a fixed number of frequency bands and only interfere with transmissions on these bands.

	The first result of the paper is the determination of the critical user arrival rate below which the system is stochastically stable, and above which it is unstable. The analysis requires symmetry assumptions which are defined in the paper. The proof for this result uses stochastic monotonicity and fluid limit models. The monotonicity allows one to bound the dynamics from above and below by two adequate discrete-state Markov jump processes, for which we obtain stability and instability results using fluid limits. This leads to a closed form expression for the critical arrival rate. The second contribution consists in two heuristics to estimate the steady-state densities of all classes of users in the network: the first one relies on a Poisson approximation of the steady-state processes. The second one uses a cavity approximation leveraging second-order moment measures, which leads to more accurate estimates of the steady-state user densities. The Poisson heuristic also gives a good estimate for the critical arrival rate.
\end{abstract}

\section{Introduction}

A key feature of wireless networks is the presence of spatial interactions between users.
More precisely, the service rate of a wireless link at any given time is a function of the interference
created by other users transmitting at the same time. This follows from Shannon's formula
(\cite{shannon48}) for the transmission rate of a wireless link when treating interference as noise.

In~\cite{sanka16}, the authors introduced a framework to study such interactions using the theory of
spatial birth-and-death processes. The setting of~\cite{sanka16} is that of 
a device to device wireless network, where users arrive at random locations in time and space and 
where their transmission rate is determined by Shannon's formula.
They derived the stability region and studied the stationary regime of this class of dynamics.

An important novelty introduced in 5G networks is the partitioning of the available bandwidth in
frequency bands of equal width and the simultaneous use of multiple frequency bands (\cite{whitepaper,bk20})
by transmissions depending on their nature: transmissions requiring lower data rates (phone calls or text messages)
are allocated less bands than those requiring higher data rates (video streaming for instance). 
This more flexible and adaptive frequency bandwidth allocation is meant to increase the network capacity.

Service differentiation based on users needs has been extensively studied, e.g., in queueing network theory. 
For instance, BCMP type queueing networks \cite{bcmp75} extend the Jackson framework to the multi-class setting.
For the class of device-to-device wireless networks introduced in~\cite{sanka16},
the multiclass paradigm remains an open problem, which is of central importance in the bandwidth part setting.
As mentioned above, in this setting, 
transmissions can use several frequency bands simultaneously.
Those requiring a higher data rate use a larger number of bands. They hence transmit with a stronger signal,
which potentially increases their transmission rate. The main novelty and difficulty is that they then
also generate a higher level of interference to other users, hence slowing down other transmissions.
Note that several new phenomena arise in this setting. For instance, users which are allocated
frequency bands that do not overlap do not directly interact. In contrast, users which are
allocated the same large set of overlapping bands and are nearby
interfere a lot and slowdown each other quite significantly, which in turn slows down all other nearby
transmissions for a longer time. 

In view of all these complex interactions, several
important questions arise. The first one is that of the stability region of this class of dynamics.
Stochastic stability has always been of central importance in communication networks,
because it ensures that e.g. user latency converges to a stationary distribution.
A second important question is that of the analytical
characterization of this stationary distribution.
For instance, in queueing network theory, the stability region of Jackson networks is
well known as well as the stationary regime \cite{kell79}. 
Another important in this service differentiation setting is that of the best allocation of resources to users in the network.
An instance of allocation policy is that which attributes radio resources proportionally to the
needs of users (see~\cite{kelly97, masso99}). Our approach is oblivious of this type of assumption.
In the present paper, arriving users in the network have their needs categorized in terms of the mean
size of the file that they have to transmit. All monotonic policies (where larger file sizes lead to larger
sets of bands) can be represented and analyzed.

The first goal of the present paper is to extend the spatial birth-and-death wireless network
framework of~\cite{sanka16} to the multiclass model described above.
As in~\cite{sanka16}, we use a birth-and-death process
of wireless dipoles (see~\cite{bacce09,haenggi16}), where dipoles arrive to a compact according to a
Poisson rain of constant intensity, representing the arrival rate of users in the network,
and leave with a departure rate proportional to the their Shannon rate in the spatial configuration.
This problem is quite complex and has been an open question for several years. 
As we will see, there is for instance no way to guess the stability condition of the
multiclass case from that of the monoclass setting derived in \cite{sanka16}.

The present paper, which is a first step in the solution of the general model,  
makes the following simplifying assumptions: (i) the traffic satisfies certain natural symmetry assumptions
(ii) the compact is assumed to be a torus, and (iii) the wireless network operates
in the low SINR case. Here is the rationale for these assumptions.
(i) Assuming a compact region of space allows one to use fluid limit techniques to asses stochastic stability.
These techniques cannot be used when the phase space is the whole Euclidean plane
(see~\cite{bacce11,sanka19} for instance). Assuming a square torus allows one to
emulate the Euclidean plane and is the simplest possible compact state space  model since 
it is invariant by translations modulo the square side.
(ii) The symmetry assumptions require that bands are in a sense exchangeable and
that users with the same needs use the same number of bands and that bands are
chosen uniformly at random. These assumptions are rather natural and are at the same
time essential in the derivations for the stability region.
(iii) the use of the linearized version of the Shannon formula, which is justified by the
low SINR regime assumption, is probably the least
essential of the three assumptions and we explain in the paper how to relax this.

The first part of the paper is focused on the stability region of such a multiclass network.
This part relies on the use of the definition of certain interference queueing networks (see~\cite{sanka19})
to obtain bounds on the stability region. Due to the nature of these dynamics, fluid limits (\cite{rybko92})
provide a natural framework to obtain conditions for stability (\cite{dai95}).

The second part uses stochastic geometry tools (see~\cite{bacce09}) as well as Palm calculus
(see~\cite{brema03}) in order to obtain heuristics for user densities in the stationary regime.
A first heuristic relies on a mean-field approximation for the interacting point processes. 
Such approximations are known to provide a powerful framework to obtain both qualitative and
quantitative results. An instance of Poisson approximation is that arising in replica mean-field systems
(see~\cite{bacce21}), where the authors prove that as the number of replicas goes to infinity,
point processes behave like independently distributed Poisson processes. In another setup,
\cite{allem22} and~\cite{gast09} provide a quantification of the error of the mean-field approximation.

\subsection*{Organization of the paper}

As mentioned earlier, we model wireless interactions using a space-time Poisson dipole model (see~\cite{sanka16} and~\cite{haenggi16}). This model and the wireless interactions we consider are defined in Section~\ref{sec:framework}. \emph{Symmetric} networks are introduced in Section~\ref{ssec:symmetric}); in such networks, users allocated the same number of communication channels have similar properties in the network, e.g., have the same arrival rate and the same file size distribution. This hypothesis allows to use combinatorics in order to obtain a closed form for the critical arrival rate in the system (through Lemma~\ref{lem:symmetric}).

The irreducibility and monotonicity properties which allow one to assess the stability region for the dynamics are gathered in Section~\ref{sec:properties}. Whereas the initial dynamic is measure-valued, monotonicity allows one to reduce the study to that of two interference queueing networks (see~\cite{sanka19}) through an adequate tessellation of space (see Section~\ref{ssec:discretization}). These two queueing networks provide an inner and an outer bound for the stability region, which yield the main stability result of this paper, stated in Theorem~\ref{thm:stability}.

To obtain the upper bound for the critical arrival rate, we rely on a fluid limit model, presented in Section~\ref{sec:fluid} and on results linking fluid limits and positive recurrence (see~\cite{shneer20} and~\cite{dai95}). To obtain instability, we use monotonicity to tie the instability of the network with the instability of an appropriate M/M/1 queue.

In the second part of the paper, we give two heuristics allowing one to quantitatively estimate the first moment of the stationary regime. These heuristics rely on stochastic geometry and rate conservation arguments. Section~\ref{ssec:poisson} discusses a first-order heuristic for the stationary user density. In Section~\ref{ssec:cavity}, we improve the quality of this heuristic by using a cavity approximation. We also compare all these heuristics with the results obtained by discrete event simulations in Section~\ref{sec:numerical}.

\section{Mathematical framework}\label{sec:framework}

\subsection{Network setup}

We consider an infrastructureless wireless network, where arrivals of transmitters follow a Poisson rain of intensity $\lambda > 0$ on a compact subset $\D$ of $\R^2$. Throughout the paper, we assume that $\D$ is a square torus.

To each arriving transmitter, we associate a receiver located at a fixed distance $r \geq 0$ in a random direction, with $r$ small compared to the side of the square. This is a Poisson dipole model as introduced in~\cite{bacce09}. We describe the configuration of users present in the network at a given time as receiver-transmitter pairs, denoted by $\Phi_t = \{(x_1, y_1), \dots (x_{N_t}, y_{N_t})\}$, where $N_t$ is the number of pairs present in the network at time $t$, $(x_i)_{1 \leq i \leq N_t}$ denotes the location of receivers and $(y_i)_{1 \leq i \leq N_t}$ that of transmitters. Let $\Phi^T_t$ be the point process describing the location of transmitters in the network and $\Phi^R_t$ be that of the locations of receivers at a given time $t$. We will refer to receiver-transmitter pairs or dipoles as \emph{users} in the rest of the paper.

$K$ orthogonal transmission bands/channels of equal width are available to users arriving in the network. Channels $i \neq j$ do not interfere with one another. Let $\Pk$ denote the set of non-empty subsets of $[1, 2, \dots, K]$. An arriving user selects a set of channels $C \in \Pk$ on which to transmit according to a given distribution $\{p_C\}_{C\in\Pk}$. This selection is made independently of the state of the network.
In the rest of the paper, we will call $C$ the \emph{class} of a user.

A receiver-transmitter pair $(x, y) \in \D$ of class $C$ arrives with a file of random size $L_{x,y}$ attached. File sizes for users in the same class $C$ are i.i.d. random variables distributed with an exponential law of mean $L_C$. Once the file has been transmitted to the receiver, the pair or dipole $(x, y)$ leaves the network.

\subsection{Symmetric system}\label{ssec:symmetric}

We say that the network is \emph{symmetric}, if all users of classes with the same cardinality have the same statistical properties in the network. In particular, whenever $\lvert C \rvert = \lvert D \rvert$, we have $p_C = p_D$ and $L_C = L_D$. 

An instance of this symmetry property is that of a system where user needs are split in $K$ categories, depending on their needs (e.g. phone calls, text messages, web browsing or video streaming). Each arriving user has a probability $p_j$ of having a need of category $j$. Upon arrival, users with needs of category $j$ are given a set of $j$ bands to transmit, sampled uniformly at random among the $\binom{K}{j}$ possibilities. Thus, the probability that a user transmits on a given set $C$ of $j$ bands is equal to $p_C = \frac{p_j}{\binom{K}{j}}$. The quantity of information this arriving user has to transmit to the network is thus sampled from an exponential distributions with parameter $L_j$ with $1 \leq j \leq K$. Setting $L_C = L_j$ and $p_C = \frac{p_j}{\binom{K}{j}}$ gives an example of a symmetric system. The symmetry in the system will be important to simplify the combinatorics we will encounter in Section~\ref{sec:stab}, when proving Theorem~\ref{thm:stability}.

The question of non-symmetric networks, where the bandwidth allocation for users allows for a differentiated use of communication channels are beyond the scope of this paper, and will be the subject of future work.

\subsection{Mathematical definitions and notation}

In this paper, vectors will be denoted as bold-faced letters and their coordinates in regular script (for instance, $\mathbf{x} = (x_i)_{0\leq i \leq d-1} \in \R^d$), and $\leq$ will denote the coordinate-wise partial ordering when used for vectors. Fluid limits associated with a stochastic process will be denoted using the same notation as the stochastic process they are derived from, in lower script, e.g. $z$ is the fluid limit associated with the process $\mathbf{Z}$.

We denote by $\M(\D)$ the set of counting measures on $\D$, all defined on a probability space $(\Omega, \mathcal{F}, \Proba)$.

For $N, K \in \N$, we denote by $D([0, \infty), \R^{N \times 2^K-1})$ the set of càdlàg functions from $\R_+$ to $\R^{N\times 2^K-1}$ and $\mathcal{B}$ is its canonical Borel $\sigma$-algebra. We consider stochastic processes as measurable maps from $(\Omega, \mathcal{F})$ to
$(D([0, \infty), \R^{N\times 2^K-1}), \mathcal{B})$.

Let $d_\infty^0$ be the infinite norm in $\R^{N\times 2^K-1}$,  $\lvert x \rvert$ be the $L^1$ norm and $\lVert x \rVert$ denote the $L^2$ norm of $x$. For a given compact set $\D \subset \R^2$, we define $\langle\cdot_\D \rangle$ as follows:

\begin{equation}\label{eq:N_D}
	\langle f_\D \rangle =
	\begin{cases}
		\int_\D f(\lVert x \rVert) \diff x, \quad \forall f : \R \rightarrow \R \\
		\int_\D f(x, 0) \diff x, \quad \forall f : \R^2 \times \R^2 \rightarrow \R,
	\end{cases}
\end{equation}

\noindent whenever the integrals are defined. We use here the same notation for two different notions; this choice is motivated by results presented in Section~\ref{sec:stab}.

For a sequence $(x^n)_{n\geq 0}$ and $x$ in $D([0, \infty), \R^{N\times 2^K-1})$, we write $x^n \rightarrow x$ if, for all $T>0$ :

\begin{equation*}
	\lim_{n\rightarrow\infty}\sup_{0\leq t \leq T}\lVert x^n(t) - x(t) \rVert = 0.
\end{equation*}

We denote by $\mathcal{E}(\R^d)$ the set of probabilities on $\R^d$ (with $d \geq 1$), and we define the partial ordering of probability measures $\leq_i$. We define:

\begin{equation*}
	I = \{f \in \mathcal{E}(\R^d), f \text{ is coordinate-wise non-decreasing}\}.
\end{equation*}

For each $F, G \in I$, we write $F \leq_i G$ if and only if

\begin{equation*}
	\int_{\R^d} f(x)F(\diff x) \leq \int_{\R^d} f(x) G(\diff x),
\end{equation*}
for all $f \in I$. $\leq_i$ is a partial order on the set of random vectors of $\R^d$. Let $\Psi$ and $\Psi'$ be two point processes on $\D$. We write $\Psi \leq \Psi'$ if we have $\Psi(\D) \leq_i \Psi'(\D)$. In the rest of the paper, we will use this partial order when comparing stochastic processes.

Finally, for two random variables $X$ and $Y$, we write $X \overset{d}{\sim} Y$ if $X$ and $Y$ have the same probability distribution. Table~\ref{tab:params} summarizes the notation used throughout the paper.

\begin{center}
    \begin{table}
        \centering
        \begin{tabular}{|@{\hspace{5pt}}l@{\hspace{5pt}}||@{\hspace{5pt}}l@{\hspace{5pt}}|} \hline
            Notation & Description \\ \hline \hline
            $\Phi^R_t, \Phi^T_t$ & Receiver (resp. transmitter) locations at time $t$ \\ \hline
            $\Phi_{C,t}$ & Point process of users of class $C$ at time $t$ \\ \hline
            $\Pk$ & Set of subsets of $[1, \dots K]$ with the exception of $\emptyset$ \\ \hline
            $\ell$ & Path-loss function \\ \hline
            $r$ & Receiver-transmitter distance \\ \hline
            $\lambda$ & Intensity of the arrival process \\ \hline
            $(p_C)_{C \in \Pk}$ & Arrival distribution of users of each type \\ \hline
            $L_C$ & Average file size for users of class $C$ \\ \hline
            $R$ & Rate-of-transmission function \\ \hline
            $\mathcal{N}_0$ & Thermal noise density in the network \\ \hline
            $\langle \ell_\D \rangle$ & $\int_{\mathbf{x} \in \D} \ell(\lVert \mathbf{x} \rVert )\diff \mathbf{x}$ \\ \hline
            $\Phi_{0,C}$ & Stationary point process of users of type $C$ \\ \hline
            $\mu_C$ & Spatial intensity of point process $\Phi_{0,C}$ \\ \hline
            $\E^0_{\Phi_{0,C}}$ & Palm expectation with respect to $\Phi_{0,C}$ \\ \hline
        \end{tabular}
        \label{tab:params}
		\vspace{10pt}
        \caption{Table of notations}
    \end{table}
\end{center}

\subsection{Wireless interactions and service times}

Let $\ell$ be a non-negative, bounded and non-increasing function with $\langle \ell_\D \rangle < \infty$ be a path-loss function. Without loss of generality, we assume that $\ell(0) = 1$.

The interference experienced by a receiver located at $x \in \D$ of class $C_x$, whose transmitter is located at $y \in \Phi_t^T$, is equal to:

\begin{equation}\label{eq:interference}
	I(x, \Phi_t) = \sum_{z \in \Phi^T_t \backslash \{y\}} \lvert C_x \cap C_z \rvert \ell(\rVert x-z \lVert),
\end{equation}

\noindent where $C_z$ is the class of the transmitter located at $z$. Note that a receiver does not interfere with its own transmitter. As mentioned earlier, the interference is a shot-noise of the transmitter point process, depending on the number of overlapping channels used by transmitters and receivers. We thus define the rate-of-transmission function for this receiver as follows:

\begin{equation}\label{eq:rof_lin}	
	R(x, \Phi_t) = \frac{A\lvert C_x \rvert \ell(r)}{\mathcal{N}_0 + I(x, \Phi_t)},
\end{equation}

\noindent where $\mathcal{N}_0 > 0$ is the thermal noise power and $A$ is a multiplicative constant. 

\eqref{eq:rof_lin} can be seen as a linearization of the Shannon-Hartley formula in the low-SINR approximation: $\lvert C_x \rvert \ell(r)$ is the signal power received by the receiver located at $x \in \D$ from its transmitter, and $\mathcal{N}_0 + I(x, \Phi_t)$ is the noise and interference power seen by this receiver. 

Under these considerations, in the low-SINR case, i.e., if $\lvert C_x \rvert \ell(r) \ll \mathcal{N}_0 + I(x, \Phi_t)$, the Shannon-Hartley (see~\cite{shannon48}) formula gives a channel capacity equal to:

\begin{equation*}
	\mathcal{C}(x) = \log_2\left(1+\frac{\lvert C_x \rvert \ell(r)}{\mathcal{N}_0 + I(x, \Phi_t)}\right) \approx \frac{1}{\ln(2)}\frac{\lvert C_x \rvert \ell(r)}{\mathcal{N}_0 + I(x, \Phi_t)},
\end{equation*}

\noindent which is the expression from~\eqref{eq:rof_lin} with $A = \frac{1}{\ln(2)}$. In the rest of the paper, we assume that $A = 1$.

The service time for a pair $(x, y)$ with a file of size $L_{x,y}$, denoted by $t_{s,x}$ is defined by:

\begin{equation}\label{eq:service_time}
	t_{s,x} = \min\left\{w > t_x : \int_{t_x}^w R(x, \Phi_u) \diff u > L_{x,y} \right\}.
\end{equation}

The instantaneous departure rate of a receiver-transmitter pair of class $C$ with receiver located at $x \in \D$, at time $t$, is equal to:

\begin{equation}\label{eq:death_rate}
	d(x, t) = \frac{1}{L_C}R(x, \Phi_t).
\end{equation}

\section{Preliminary properties and definitions}\label{sec:properties}

We establish a stochastic recurrence for the process $\Phi_t$, which will help us asserting a central property for the network, is \emph{stochastic monotonicity}. 

A random variable $X$ with values in $\R^d$ \emph{dominates} $Y$ if the cumulative distribution functions of $X$ and $Y$, denoted as $F_X$ and $F_Y$, are such that $F_Y \leq_i F_X$. A consequence of stochastic domination is, for all $\mathbf{x} \in \R^d$ and for all probabilities $\Proba$ defined on $\R^d$, we have $\Proba[X > \mathbf{x}] \geq \Proba[Y > \mathbf{x}]$.

We can define a stochastic recurrence relation for each of the subprocesses $\Phi_{C,t}$: denote as $(T_{C,n})_{n \leq 0}$ the times at which events (either an arrival or a departure) happen in $\Phi_{C,t}$. The relation between $\Phi_{C,T_{C,n}}$ and $\Phi_{C, T_{C,n+1}}$ is given by:

\begin{equation}\label{eq:sto_rec}
	\begin{cases}		
		\Phi_{C, T_{C,n+1}} &= \Phi_{C, T_{C,n}} + \mathcal{B}_{C, n} - \mathcal{D}_{C, n}, \\
		T_{C,n+1} &= T_{C,n} + \delta_{C,n},
	\end{cases}
\end{equation}

\noindent where $\mathcal{B}_{C,n}$ and $\mathcal{D}_{C,n}$ are $\mathcal{M}(\D)$-valued random variables that are defined as follows: let $b_C$ be a real-valued random variable distributed according to an exponential distribution with rate $\lambda p_C$ and $\mathbf{d}_{C,n}$ be a $\R^{\Phi_{C,n}(\D)}$-valued random variable where each coordinate is distributed according an independent exponential distribution with mean $\frac{1}{L_C} R(x_i, \Phi_{T_{C,n}})$, for each $x_i \in \Phi_{C, T_{C,n}}$. Then, the values of $\mathcal{B}_{C,n}$ and $\mathcal{D}_{C,n}$ are given as follows :
\begin{itemize}
	\item[-] $\mathcal{B}_{C,n}$ is equal to the null measure if $b_C \geq \min \mathbf{d}_{C,n}$, or equal to $\delta_x$, where $x$ is sampled uniformly in $\D$ otherwise;
	\item[-] $\mathcal{D}_{C,n}$ is equal to the null measure if $b_C \leq \min \mathbf{d}_{C,n}$ or if $\Phi_{C,n}(\D) = 0$, or equal to $\delta_{x_i}$ where $i$ is the smallest coordinate of $\mathbf{d}_{C,n}$ otherwise.
\end{itemize}
Finally, the time between events is equal to $\delta_{C,n} = \min\{b_C, \min \mathbf{d}_{C,n}\}$. More results about partial ordering of stochastic recurrences can be found in Chapter 4 of~\cite{brema03}.

\subsection{Stochastic monotonicity}

Stochastic monotonicity will help us obtaining bounds for the dynamics of the network by defining queueing systems that dominate or are dominated by our dynamics. We need the following theorem:

\begin{theorem}\label{thm:domination}
	Let $\Delta : (\Phi_0, \lambda, L, R) \mapsto \Phi_t$ define the realization of the dynamics, where $\Phi_0$ is the initial condition of the network, $\lambda$ is the arrival rate, $\mathbf{L}$ is the vector of average file sizes and $R$ is the rate-of-transmission function. Let $\Phi = \Delta(\Phi_0, \lambda, d)$ and $\Phi' = \Delta(\Phi'_0, \lambda', d')$. The following conditions are sufficient for $\Phi'$ to dominate $\Phi$ (with all the other parameters taken equal) :
	\begin{itemize}
		\item[$i)$] $\lambda \leq \lambda'$;
		\item[$ii)$] $\mathbf{L} \leq \mathbf{L}'$;
		\item[$iii)$] for all point processes $\Psi \leq \Psi'$ on $\D$ and $x \in \D$, $R'(x, \Psi') \leq R(x, \Psi)$;
		\item[$iv)$] $\Phi_0 \subseteq \Phi'_0$.
	\end{itemize}
\end{theorem}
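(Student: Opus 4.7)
The plan is to prove dominance via a pathwise coupling of $\Phi$ and $\Phi'$ on a common probability space, constructed so that $\Phi_t \subseteq \Phi'_t$ almost surely for every $t \geq 0$. Such a set inclusion yields $\Phi_t(\D) \leq \Phi'_t(\D)$ pointwise, hence the stochastic order $\Phi_t(\D) \leq_i \Phi'_t(\D)$, which is precisely $\Phi \leq \Phi'$ in the sense defined just before the theorem.

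I would first build the driving randomness. Take a marked Poisson rain of intensity $\lambda'$ on $\R_+ \times \D$ whose marks give a class $C \in \Pk$ drawn i.i.d.\ from $(p_C)$; this is the arrival stream of $\Phi'$ by~(i). The arrival stream of $\Phi$ is then obtained by independent Bernoulli thinning of this rain with probability $\lambda/\lambda' \in [0,1]$, so every birth in $\Phi$ coincides with a birth in $\Phi'$ at the same location and class. For deaths, attach to every candidate user $(x,C)$ an independent Poisson process $N_{x,C}$ of rate $M$, where $M$ is a uniform upper bound on the death rates (which exists because the denominator in~\eqref{eq:rof_lin} is at least $\mathcal{N}_0$ and file sizes are bounded below in each class). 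Using the standard thinning representation, each point of $N_{x,C}$ carries an independent mark $U \sim \mathrm{Unif}[0,1]$ and is accepted as a departure from $\Phi$ when $U \leq d(x,t)/M$ and from $\Phi'$ when $U \leq d'(x,t)/M$, where $d$ and $d'$ denote the instantaneous departure rates in the two systems.

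I would then argue by induction on the successive event times $(\tau_n)$ of the superposed dynamics, starting from $\Phi_0 \subseteq \Phi'_0$ at $n=0$ by~(iv). If $\tau_{n+1}$ is an arrival epoch, the new point is either common to both systems or exclusive to $\Phi'$, so the inclusion persists. If $\tau_{n+1}$ is a firing of some $N_{x,C}$ with $x \in \Phi'_{\tau_n}$, then conditions~(ii) and~(iii) applied to the ordered pair $(\Phi_{\tau_n}, \Phi'_{\tau_n})$ give
\[
d'(x, \tau_n) \;=\; \frac{R'(x, \Phi'_{\tau_n})}{L'_C} \;\leq\; \frac{R(x, \Phi_{\tau_n})}{L_C} \;=\; d(x, \tau_n),
\]
so using the same mark $U$ in both systems makes acceptance in $\Phi'$ imply acceptance in $\Phi$. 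Hence whenever the firing removes $x$ from $\Phi'$ it also removes $x$ from $\Phi$ if present, and $\Phi_{\tau_{n+1}} \subseteq \Phi'_{\tau_{n+1}}$; right-continuity extends the inclusion to all $t \geq 0$.

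The main obstacle is that the death-rate inequality above is only valid under the inclusion $\Phi_{\tau_n} \subseteq \Phi'_{\tau_n}$, which is itself what the induction is maintaining, so the coupling has to be self-consistent. This is precisely where the four assumptions combine: (iv) initializes the inclusion, (i) preserves it at births through the thinning coupling, and (ii)--(iii) preserve it at deaths through the rate inequality above. Since $\D$ has finite measure and $\lambda'$ is finite, event times almost surely do not accumulate, so the inductive construction is well defined on $\R_+$ and the conclusion $\Phi \leq \Phi'$ follows.
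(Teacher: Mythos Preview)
Your proof is correct and follows essentially the same strategy as the paper's: a pathwise coupling maintaining $\Phi_t \subseteq \Phi'_t$ by induction over event times, with arrivals coupled by thinning and departures coupled so that a death in $\Phi'$ forces the corresponding death in $\Phi$. The only cosmetic difference is that the paper couples departures via the Poisson imbedding of~\cite{brema96}, whereas you use uniformization with a dominating rate $M$ and uniform marks; both devices achieve the same ordering of departure events, and your treatment of all four conditions simultaneously in the death-rate inequality is slightly more economical than the paper's, which spells out~(i) and leaves (ii)--(iv) to the reader.
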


It is to note that condition $iii)$ is met when we have two path-loss functions $\ell$ and $\ell'$ such that for all $r \geq 0$, $\ell(r) \leq \ell'(r)$.

\begin{proof}

	To obtain domination, we use a coupling argument between two instances of the dynamics to obtain the domination relation, similar as the ones used in Appendix B of~\cite{sanka19}.

	Let us take $0 < \lambda < \lambda'$ with the same fixed initial condition $\Phi_0$, a vector $\mathbf{L}$ and a rate-of-transmission function $R$, and let $\Phi_t = \Delta(\Phi_0, \lambda, d)$ and $\Phi'_t = \Delta(\Phi_0, \lambda', d)$. The quantities related to $\Phi'$ will be denoted with a prime. We want to prove that for all times $0 \leq t$, $\Phi_t \subseteq \Phi'_t$.
	
	We couple both the arrival and the departure processes in the network as follows: the arrival process of $\Phi$ is a Poisson rain $\mathcal{A}$ with parameter $\lambda$, and the arrival process for $\Phi'$ is $\mathcal{A} \cup \mathcal{A}'$, with $\mathcal{A}'$ being a Poisson rain with intensity $\lambda' - \lambda > 0$ independent from $\mathcal{A}$, so that common arrivals in $\Phi_t$ and $\Phi'_t$ happen at the same locations and times. Using this coupling, we will show that, at all times $t$, $\Phi_t \subseteq \Phi'_t$.
	
	At time $t = 0$, we have trivially $\Phi_0 \subseteq \Phi_0$. At time $t > 0$, we know that both $\Phi_t(\D)$ and $\Phi'_t(\D)$ are almost surely finite. Let us write $\Phi_t = \sum_{C \in \Pk} \sum_{i = 1}^{\Phi_{C,t}(\D)} \delta_{x_{C,i}}$ and $\Phi'_t = \sum_C \sum_{i = 1}^{\Phi'_{C,t}(\D)} \delta_{x'_{C,i}}$.
	
	Assume that up to time $t$, $\Phi_t \subseteq \Phi'_t$, the next event happening at a time $\hat{t} \geq t$ can be one of the following nature:

	\begin{itemize}
		\item[-] The arrival of a user of class $C$ in $\Phi_t$;
		\item[-] The arrival of a user of class $C$ in $\Phi'_t$;
		\item[-] The departure of a user of class $C$ in $\Phi_t$;
		\item[-] The departure of a user of class $C$ in $\Phi'_t$.  
	\end{itemize}
	
	Because arrivals are coupled, an arrival in either $\Phi_t$ or $\Phi'_t$ maintains the inclusion. The same holds if an element of $\Phi_t$ leaves. The last case to look at is the departure of an element $x$ of $\Phi'_t$.

	Let $\hat{t}^-$ be such that $\hat{t}^- = \hat{t}$ but the departure of $x$ has not happened yet. From our assumptions, we know that $\Phi_{D, \hat{t}^-} \subseteq \Phi'_{D, \hat{t}^-}$ for all classes $D \in \Pk$. Assume the next departure is for a user at $x$ of class $C \in \Pk$:
	
	\begin{align*}
		R(x, \Phi_{\hat{t}^-}) & = \frac{\lvert C_x \rvert \ell(r)}{\mathcal{N}_0 + \sum_{y \in \Phi_{\hat{t}^-}} \lvert C_x \cap C_y \rvert \ell(\lVert x - y \rVert)} \\
		& \overset{(a)}{\geq} \frac{\lvert C_x \rvert \ell(r)}{\mathcal{N}_0 + \sum_{y \in \Phi'_{\hat{t}^-}} \lvert C_x \cap C_y \ell(\lVert x - y \rVert)} \\
		& = R(x, \Phi'_{\hat{t}^-}),
	\end{align*}

	\noindent where $(a)$ uses the fact that $\lvert \Phi_t(\D) \rvert \leq \lvert \Phi'_t(\D) \rvert$. In other words, the departure rates in $\Phi_{\hat{t}^-}$ are larger than in $\Phi'_{\hat{t}^-}$. 
	
	We take the \emph{Poisson imbedding} of the departure processes (see~\cite{brema96}): let $\mathcal{D}_{C,t}$ and $\mathcal{D}'_{C,t}$ be the point processes of users of class $C$ that left each system up to time $t$. These processes have respective stochastic intensities $\frac{1}{L_C}R(x, \Phi_t)$ and $\frac{1}{L_C}R(x, \Phi'_t)$. Using Lemma 3 from~\cite{brema96}, we can imbed them on the same Poisson point process $\mathcal{N}$ of intensity 1 on $\R^2$. 
	
	Using this Poisson imbedding, any point $x$ leaving $\Phi'_{C, \hat{t}^-}$ has already left $\Phi_{C, \hat{t}^-}$, which proves that the inclusion is maintained if the next event to come is a departure in $\Phi'_t$ and this concludes the proof of Condition $i)$.

	To obtain the other conditions, we use the same argument and we compare the rate-of-transmission functions in each case to get the required inclusion.
\end{proof}

Theorem~\ref{thm:domination} is central to our study, because it will allow us to bound from above and below the dynamics of the network we are studying in order to obtain bounds for the limits of the stability region in the network.

\subsection{Irreducibility}

The second important property about the process $\Phi_t$ is the following: 

\begin{theorem}\label{thm:phi_irreducible}
	$\Phi_t$ is a $\phi$-irreducible Markov jump process on $\M(\D)$. 
\end{theorem}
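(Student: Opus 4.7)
The plan is to decompose the statement into two parts: first that $\Phi_t$ is a jump Markov process, then that it is $\phi$-irreducible for a well-chosen measure $\phi$. The natural choice is $\phi = \delta_\emptyset$, the Dirac mass at the null counting measure on $\D$; with this choice, $\phi$-irreducibility reduces to showing that from any initial state with finitely many users, the empty configuration is reached in finite time with positive probability.

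For the Markov jump structure, the stochastic recurrence~\eqref{eq:sto_rec} directly yields the Markov property at jump times: the waiting time $\delta_{C,n}$ and the nature of the next event depend only on $\Phi_{T_{C,n}}$ and on independent exponential clocks. To check that this defines a genuine jump process on $\mathcal{M}(\D)$, I would observe that the total event rate out of any finite configuration $\Phi$ is finite: arrivals happen at rate $\lambda$ and, since $I(x,\Phi) \geq 0$, each per-user departure rate is at most $\lvert C_x \rvert \ell(r)/(L_{C_x}\mathcal{N}_0)$, so only finitely many jumps occur in any bounded time interval almost surely.

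For $\phi$-irreducibility, I would exploit a uniform lower bound on death rates valid on configurations of bounded size: since $\ell(\cdot) \leq \ell(0) = 1$ and $\lvert C_x \cap C_z \rvert \leq K$, a configuration of size $k$ yields $I(x,\Phi) \leq K(k-1)$, hence
\begin{equation*}
    d(x,\Phi) \geq \alpha_k := \frac{\ell(r)}{\bar L\,(\mathcal{N}_0 + K(k-1))} > 0,
\end{equation*}
with $\bar L = \max_{C \in \Pk} L_C$. I would then consider the event that, starting from $\Phi_0$ with $\Phi_0(\D) = n$, the next $n$ jumps are all departures with no intervening arrival. When $k$ users are present, the probability that the next event is a departure rather than an arrival is at least $k\alpha_k/(\lambda + k\alpha_k)$, so this event has probability at least $\prod_{k=1}^n k\alpha_k/(\lambda + k\alpha_k) > 0$. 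On it, $\Phi$ reaches $\emptyset$ in exactly $n$ jumps, and in particular in finite time, which establishes $\delta_\emptyset$-irreducibility.

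The main subtlety lies in the choice of $\phi$. Taking $\phi = \delta_\emptyset$ works because the empty configuration is a distinguished point easily reached from any state by means of the finite-configuration lower bound on death rates; a richer irreducibility measure, for instance the law of a Poisson configuration, could be built by first emptying the system and then generating the desired arrivals during a short arrival-dominated interval, but $\delta_\emptyset$ is already enough for the downstream use of Theorem~\ref{thm:phi_irreducible} in the stability analysis.
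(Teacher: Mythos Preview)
Your argument is correct. Both you and the paper reduce the problem to showing that the empty configuration is reachable from any finite configuration via a string of consecutive departures, and your lower bound on the death rate (using $\ell\le 1$ and $|C_x\cap C_z|\le K$) is a clean way to make that step explicit; the paper simply writes the product of departure probabilities and observes it is positive.

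The genuine difference is in the choice of $\phi$. You take the minimal option $\phi=\delta_\emptyset$, so only the ``any state $\to$ empty'' direction is needed. The paper instead constructs a richer measure on $\mathcal M(\D)$, essentially putting $\phi(\{\emptyset\})=1$ and assigning to each cylinder event $A_n=\{\Phi(\D)=n,\ \Phi\in B_1\times\cdots\times B_n\}$ the mass $2^{-n}H(B_1)\cdots H(B_n)$ with $H$ the Haar measure on the torus; it then proves \emph{two} steps: any state $\to$ empty (as you do), and empty $\to A_n$ via $n$ consecutive arrivals landing in the prescribed Borel sets. Your approach is shorter and entirely sufficient for the downstream use (Theorem~\ref{thm:cutoff} only needs some nontrivial irreducibility measure to invoke the recurrent/transient dichotomy from \cite{meyn12}); the paper's approach directly exhibits something close to a maximal irreducibility measure, which is more informative but not required here. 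Your final paragraph already anticipates exactly this trade-off. One minor cosmetic point: the total arrival rate on the compact torus is $\lambda\,|\D|$ rather than $\lambda$, so the competing-clocks probability should read $k\alpha_k/(\lambda|\D|+k\alpha_k)$; this does not affect the argument.
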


It is to note that the similarity in notation between $\phi$-irreducibility and the point process $\Phi_t$ is coincidental, the two notions are independent.

\begin{proof}
	Our goal is to define a measure on the set of counting measures $\M(\D)$. We start by letting $\phi$ be the set function defined as follows:

	\begin{itemize}
		\item $\phi(\{0_{\M(\D)}\}) = 1$, where $0_{\M(\D)}$ is the measure associated with the null counting measure,
		\item For $k \in \N$ and $B_1, B_2, \dots, B_N$ disjoint Borel subsets of $\D$, we define the event $A_n = \{\Phi \in \M(\D) : \Phi(\D) = n, \Phi = (y_1, y_2, \dots, y_n) \in B_1 \times \dots \times B_n, \Phi(B_1)>0, \dots \Phi(B_n) > 0\}$. We then set:
		\begin{equation*}
			\phi(A_n) = \frac{1}{2^n}H(B_1)H(B_2) \dots H(B_n),
		\end{equation*}
		where $H$ is a Haar measure on the square torus $\D$.
	\end{itemize}

	Here, $\phi$ is a set function on the \emph{semiring} of sets $(A_n)_{n \in \N, (B_i)\in \mathcal{F}^n}$. Using Theorem 11.3 from~\cite{billi99}, we can extend $\phi$ in a unique way to a measure on the $\sigma$-field generated by the set of events $\{A_n : n \in \N, (B_i)\in \mathcal{F}^n \}$, which is equal to $\mathcal{F}$. For ease of notation, we will denote this measure by $\phi$ as well. This way, $\phi$ is a measure defined on the set of counting measures $\mathcal{M}(\D)$.

	To obtain $\phi$-irreducibility, we will proceed in two steps: in the first step, we prove that, from each state $\Phi$ with $N$ points, we can reach the empty state with positive probability, and in the second step, we prove that, from the empty state, we can reach any state $A$ with $\Phi(A) > 0$ with positive probability in a finite number of steps, which allows us to conclude.

	Let us assume that at a given time $t$, $N$ receiver-transmitter pairs are present in the system, with locations $(x_i, y_i)_{1 \leq i \leq N}$, $x_i \in \Phi_t^R$ and $y_i \in \Phi_t^T$, and let $(C_i)_{1\leq i \leq N}$ be the classes of these users. The probability that we have the departure of $(x_1, y_1), \dots, (x_n, y_n)$ in the next $N$ steps is equal to:

	\begin{equation*}
		\Proba[N \text{ departures in a row} \left\lvert \Phi_t \right.] = \prod_{i = 1}^N \frac{d_{C_i}}{d_{C_i} + b_{C_i}},
	\end{equation*}

	\noindent where $b_{C_i} = \lambda p_{C_i} \lvert \D \rvert$ and $d_{C_i} = \frac{1}{L_{C_i}}R(x_i, \Phi_{t_i})$, with $\Phi_{t_i}$ being the network configuration after the departure of the user $(x_i, y_i)$. This probability is non-null, i.e., we can reach the null measure with nonzero probability from any other state. 
	
	Similarly, assume we start from the empty state, let $n$ be an integer, $B_1, \dots, B_n$ be $n$ disjoint Borel subsets of $\D$, and let us define $A_n$ as previously. By definition, $\Phi(A_n) > 0$. The probability to reach $A_n$ from the empty measure $\emptyset$ can be expressed as:

	% \begin{multline*}
	% 		\Proba^n \left(\emptyset, A_n\right) = \hfill\\ 
	% 		\int_{y_1 \in B_1, \dots, y_n \in B_n} \int_{t_1 = 0}^\infty \dots \int_{t_n = t_{n-1}}^\infty \mspace{-12mu} H(\diff y_1)\dots H(\diff y_n) \\ 
	% 		e^{-\lambda p_{C_1} t_1} \dots e^{-\lambda p_{C_n}(t_n-t_{n-1})} \prod_i \frac{b_{C_i}}{b_{C_i} + d_{C_i}}\\
	% 		\hfill \lambda p_{C_1}\diff t_1 \dots \lambda p_{C_n}\diff t_n,
	% \end{multline*}
	\begin{multline*}
		\Proba^n \left(\emptyset, A_n\right) = \int_{y_1 \in B_1, \dots, y_n \in B_n} \int_{t_1 = 0}^\infty \dots \int_{t_n = t_{n-1}}^\infty \mspace{-12mu} H(\diff y_1)\dots H(\diff y_n) e^{-\lambda p_{C_1} t_1} \dots e^{-\lambda p_{C_n}(t_n-t_{n-1})} \prod_i \frac{b_{C_i}}{b_{C_i} + d_{C_i}} \lambda p_{C_1}\diff t_1 \dots \lambda p_{C_n}\diff t_n,
\end{multline*}

	\noindent where $C_i$ denotes the class of the $i$-th user. This probability is positive, which means that we can reach $A_n$ from the empty measure with positive probability in $n$ steps, which gives us $\phi$-irreducibility for the process $\Phi_t$.
\end{proof}

\subsection{Stability}

A first question of interest is that of the stability region. In the rest of the paper, we use the following definition for stability:

\begin{definition}
	An SBD process is said to be \emph{stable} if the Markov chain $\Phi_t$ is positive recurrent, and \emph{unstable} if it is null recurrent or transient.
\end{definition}

We can now state the following theorem:

\begin{theorem}\label{thm:cutoff}
	Under the foregoing assumptions, there exist two values $0 \leq \lambda_c^- \leq \lambda_c^+$ such that: 
	\begin{itemize}
		\item $\forall \lambda < \lambda_c^-$, $\Phi_t$ is stable;
		\item $\forall \lambda > \lambda_c^+$, $\Phi_t$ is unstable.
	\end{itemize}
\end{theorem}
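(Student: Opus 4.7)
The theorem asserts the existence of two thresholds separating the stable from the unstable regime. The natural strategy is to define them canonically as
\[
\lambda_c^- := \sup\{\lambda \geq 0 : \Phi_t \text{ is positive recurrent at rate } \lambda\},
\qquad
\lambda_c^+ := \inf\{\lambda \geq 0 : \Phi_t \text{ is not positive recurrent at rate } \lambda\},
\]
with the conventions $\sup\emptyset = 0$ and $\inf\emptyset = +\infty$, and then to deduce both $\lambda_c^- \leq \lambda_c^+$ and the two displayed claims from the monotonicity of $\Phi_t$ in $\lambda$ provided by Theorem~\ref{thm:domination}(i), together with the $\phi$-irreducibility of Theorem~\ref{thm:phi_irreducible}.

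\textbf{Monotonicity of stability.} The core step is to show that if $\lambda_1 \leq \lambda_2$ and the $\lambda_2$-process is positive recurrent, then so is the $\lambda_1$-process. The coupling used in the proof of Theorem~\ref{thm:domination} delivers, on a common probability space, realizations $\Phi^{(1)}$ and $\Phi^{(2)}$ started from the empty configuration $\emptyset$ such that $\Phi^{(1)}_t \subseteq \Phi^{(2)}_t$ for all $t \geq 0$. In particular, whenever $\Phi^{(2)}_t = \emptyset$ one has $\Phi^{(1)}_t = \emptyset$, so the first return times to the empty state satisfy $\tau^{(1)}_\emptyset \leq \tau^{(2)}_\emptyset$ almost surely. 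The measure $\phi$ constructed in Theorem~\ref{thm:phi_irreducible} places positive mass on $\{0_{\M(\D)}\}$, which is an accessible atom of the jump process, so by standard regenerative theory positive recurrence is equivalent to finiteness of $\E_\emptyset[\tau_\emptyset]$. The coupling therefore transfers positive recurrence from the $\lambda_2$-process to the $\lambda_1$-process, and its contrapositive says that instability at some rate $\lambda_1$ forces instability at every larger rate $\lambda_2 \geq \lambda_1$.

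\textbf{Conclusion.} From this monotonicity, any $\lambda < \lambda_c^-$ lies strictly below some rate at which the process is stable, and is therefore itself stable, yielding the first bullet. Symmetrically, any $\lambda > \lambda_c^+$ strictly dominates some unstable rate, hence is unstable, yielding the second bullet. The inequality $\lambda_c^- \leq \lambda_c^+$ is then immediate, since a value of $\lambda$ in a hypothetical gap $\lambda_c^+ < \lambda < \lambda_c^-$ would have to be simultaneously stable and unstable.

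\textbf{Main obstacle.} The one point that genuinely requires care is the reduction of the abstract notion of positive recurrence on the non-denumerable measure-valued state space $\M(\D)$ to finiteness of the return time to $\emptyset$. This is exactly where Theorem~\ref{thm:phi_irreducible} is essential: once it is established that $\emptyset$ is a reachable atom of positive $\phi$-measure, the sample-path coupling provided by Theorem~\ref{thm:domination} controls stability directly and not merely the marginal number of points. Once this regenerative interpretation is in place the remainder of the argument is entirely routine.
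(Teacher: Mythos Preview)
Your proposal is correct and follows the same overall route as the paper: define $\lambda_c^-$ and $\lambda_c^+$ as the supremum of the stable rates and the infimum of the unstable ones, then invoke the monotonicity in $\lambda$ from Theorem~\ref{thm:domination} together with the $\phi$-irreducibility of Theorem~\ref{thm:phi_irreducible} to conclude. The paper is somewhat terser here, simply citing the recurrence/transience dichotomy from Meyn--Tweedie and the monotonicity principle for SBD processes, whereas you spell out explicitly \emph{how} the coupling transfers positive recurrence by comparing return times to the accessible atom $\emptyset$; this added detail is a genuine clarification rather than a different argument. One small point the paper makes explicit and you leave implicit in your convention $\sup\emptyset=0$ is that $\lambda=0$ is trivially stable, so the set over which the supremum is taken is non-empty.
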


\begin{proof}

	This theorem is a consequence of Theorem~\ref{thm:domination}: if the system is stable for a given $\lambda_0$, then it is stable for each $\lambda < \lambda_0$ and if the system is unstable for a given $\lambda_1$, then it is unstable for all $\lambda > \lambda_1$. The existence of cutoff (or critical) arrival rates is well-established in monotonic queueing networks, and was adapted to spatial birth-and-death processes (see Theorems 1 and 2 of~\cite{sanka16} for instance). Let $\lambda_c^-$ and $\lambda_c^+$ be defined as:

	\begin{align*}
		\lambda_c^- & = \sup_{\lambda > 0}\{\lambda \text{ such that } \Phi_t \text{ is stable}\}, \\
		\lambda_c^+ & = \inf_{\lambda > 0}\{\lambda \text{ such that } \Phi_t \text{ is unstable}\}.
	\end{align*}

	$\Phi_t$ is an irreducible Markov chain on $\mathcal{M}(\D)$. We know that the Markov chain is either recurrent or transient (see Theorem 8.2.5 from~\cite{meyn12}). Using stochastic monotonicity in the network from Theorem~\ref{thm:domination}, we know that the chain $\Phi_t$ is positive recurrent for all $\lambda \leq \lambda_c^-$ and transient for all $\lambda \geq \lambda_c^+$, which gives $\lambda_c^- \leq \lambda_c^+$. Since the system is trivially stable for $\lambda = 0$ (in which case $\Phi_t$ is constantly equal to the null measure), $\lambda_c^-$ and $\lambda_c^+$ are well defined and $\lambda_c^- \geq 0$, which concludes the proof.

\end{proof}

We can refine this result by obtaining an upper and a lower bound bounds for $\lambda_c$:

\begin{lem}\label{lem:bounds_1}
	$\lambda_c^-$ and $\lambda_c^+$ satisfy:
\begin{equation*}
	\frac{\ell(r)}{K \bar{L} \langle \ell_\D \rangle} \leq \lambda_c^- \leq \lambda_c^+ \leq \frac{K\ell(r)}{\underline{L} \langle \ell_\D \rangle},
\end{equation*}

\noindent where $\bar{L} = \max_C L_C$, $\underline{L} = \min_C L_C$ and $\langle \ell_\D \rangle = \int_{x\in \D} \ell(\lVert x \rVert) \diff x$.
\end{lem}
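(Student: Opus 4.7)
The plan is to apply Theorem~\ref{thm:domination} twice, sandwiching $\Phi_t$ between two monoclass spatial birth-and-death processes of the type studied in~\cite{sanka16}, for which the critical arrival rate is explicit, and then transfer stability (respectively, instability) through the partial order.

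For the lower bound $\lambda_c^- \geq \ell(r)/(K\bar L \langle \ell_\D \rangle)$, I would construct an upper-dominating monoclass SBD process $\Phi^{(u)}$ on $\D$ with arrival rate $\lambda$, mean file size $\bar L$, and rate
\[
R^{(u)}(x, \Psi) = \frac{\ell(r)}{\mathcal{N}_0 + K\sum_{z \in \Psi} \ell(\|x-z\|)}.
\]
Conditions~(i), (ii), (iv) of Theorem~\ref{thm:domination} are immediate, and for~(iii) I note that whenever $\Psi \subseteq \Psi'$, the bounds $|C_x| \geq 1$ and $|C_x \cap C_z| \leq K$ applied to~\eqref{eq:rof_lin} yield
\[
R(x, \Psi) \geq \frac{\ell(r)}{\mathcal{N}_0 + K\sum_{z \in \Psi}\ell(\|x-z\|)} \geq R^{(u)}(x, \Psi'),
\]
where the second step is monotonicity of $R^{(u)}$ in the configuration. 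Thus $\Phi_t \leq \Phi^{(u)}_t$. The process $\Phi^{(u)}$ is the classical monoclass SBD of~\cite{sanka16} with signal $\ell(r)$ and effective interference path-loss $K\ell(\cdot)$, whose stability region is $\lambda < \ell(r)/(\bar L \cdot K\langle \ell_\D \rangle)$, and the lower bound on $\lambda_c^-$ follows by domination.

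For the upper bound $\lambda_c^+ \leq K\ell(r)/(\underline L \langle \ell_\D \rangle)$, I would follow the sketch given in the introduction and use monotonicity to compare the total count $N_t = \Phi_t(\D)$ with an appropriate M/M/1 queue. Concretely, the plan is to couple $N_t$ from below with an M/M/1 queue of arrival rate $\lambda|\D|$ and service rate $\mu = K\ell(r)|\D|/(\underline L\langle \ell_\D \rangle)$, so that transience of the queue whenever $\lambda|\D| > \mu$ forces $N_t$, and hence $\Phi_t$, to be unstable.

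The main obstacle is bounding the aggregate departure rate $\sum_{(x,y)\in \Phi_t} R(x,\Phi_t)/L_{C_x}$ uniformly by $\mu$. A pointwise bound on $R$ is too loose, because the rate can be of order $|C_x|\ell(r)/\mathcal{N}_0$ on configurations whose classes happen to be pairwise disjoint; this is precisely why a naive monoclass comparison via condition~(iii) fails in this direction. The fix is a channel-by-channel rewriting of the shot noise,
\[
\sum_{z \in \Phi_t} |C_x \cap C_z|\,\ell(\|x-z\|) = \sum_{c \in C_x}\sum_{z \in \Phi_t^T : c \in C_z}\ell(\|x-z\|),
\]
combined with the symmetry of $(p_C, L_C)$ of Section~\ref{ssec:symmetric} and a Cauchy--Schwarz (or harmonic–arithmetic mean) inequality. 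These together absorb the disjoint-class pathology, produce the saturation of the total service rate at $K\ell(r)|\D|/(\underline L \langle \ell_\D \rangle)$, and close the comparison with the M/M/1 queue; the remaining ingredients (the standard transience criterion for M/M/1 and Theorem~\ref{thm:domination}) are routine.
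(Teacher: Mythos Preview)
Your lower-bound argument is exactly the paper's: bound $R$ below by the monoclass rate $R^{(u)}$ with signal $\ell(r)$ and interference kernel $K\ell$, take file size $\bar L$, and invoke~\cite{sanka16} via Theorem~\ref{thm:domination}.

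For the upper bound the situation is more interesting. The paper's proof asserts the pointwise inequality
\[
R(x,\Phi_t)\;\le\;R_d(x,\Phi_t)\;\triangleq\;\frac{K\ell(r)}{\mathcal{N}_0+\sum_{y\in\Phi_t\setminus\{x\}}\ell(\lVert x-y\rVert)}
\]
and then applies Theorem~\ref{thm:domination} to a dominated monoclass process with file size $\underline L$. You correctly point out that this pointwise bound \emph{fails}: if $|C_x|=1$ and every other user's class is disjoint from $C_x$, then $R(x,\Phi_t)=\ell(r)/\mathcal{N}_0$, which exceeds $R_d$ as soon as $\sum_y\ell(\lVert x-y\rVert)>(K-1)\mathcal{N}_0$. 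So you have identified a genuine gap in the paper's own argument; condition~(iii) of Theorem~\ref{thm:domination} cannot be verified for the pair $(R,R_d)$ in this direction.

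That said, your proposed repair also has a gap. A coupling of $N_t=\Phi_t(\D)$ below an M/M/1 queue with service rate $\mu=K\ell(r)\lvert\D\rvert/(\underline L\langle\ell_\D\rangle)$ requires the \emph{aggregate} departure rate to be bounded by $\mu$ in every state, and this is false for sparse configurations: with a single user present the total departure rate is $|C_x|\ell(r)/(L_{C_x}\mathcal{N}_0)$, which has no relation to $\langle\ell_\D\rangle$. The ``saturation'' you allude to only holds asymptotically for dense states, so a direct M/M/1 domination cannot close. Your channel-by-channel rewriting is a good idea (it gives, for any fixed $c\in C_x$, the lower bound $I(x,\Phi_t)\ge\sum_{z:\,c\in C_z}\ell(\lVert x-z\rVert)$), but the subsequent appeal to symmetry and Cauchy--Schwarz is not spelled out enough to see how it produces a \emph{uniform} bound on the total service rate; and in any case Lemma~\ref{lem:bounds_1} is stated without the symmetry hypothesis. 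Note finally that the upper bound of Lemma~\ref{lem:bounds_1} is superseded by Theorem~\ref{thm:stability}, whose proof (via Theorem~\ref{thm:reciprocal}) does not use the pointwise comparison at all, so this gap does not affect the paper's main results.
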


\begin{proof}
The rate-of-transmission is bounded from below by that where all interfering users transmit on all channels (i.e., are of class $[1,\dots, K]$) and the transmitting user uses only one channel, i.e., for all $x, t$:

 \begin{equation*}
 	R(x,\Phi_t) \geq R_u(x, \Phi_t) \triangleq \frac{\ell(r)}{\mathcal{N}_0 + \sum_{y \in \Phi_t \backslash \{x\}}K\ell(\lVert x-y \rVert)}.
 \end{equation*}

Moreover, by definition, $\mathbf{L} < \bar{L}$ component-wise.

Conversely, the rate-of-transmission is bounded from above by that where interfering users use a single channel and the transmitting users use all $K$ channels, i.e., for all $x, t$:

\begin{equation*}
R(x,\Phi_t) \leq R_d(x, \Phi_t) \triangleq \frac{K\ell(r)}{\mathcal{N}_0 + \sum_{y \in \Phi_t \backslash \{x\}}\ell(\lVert x-y \rVert)},
\end{equation*}

\noindent and $\underline{L} \leq \mathbf{L}$ component-wise. For any given initial condition $\Phi_0$ and arrival rate $\lambda$, let $\Phi_{u,t} = \Delta(\Phi_0, \lambda, \underline{L}, R_u)$ and $\Phi_{d,t} = \Delta(\Phi_0, \lambda, \bar{L}, R_d)$. $\Phi_u$ and $\Phi_d$ two monotype dipolar Poisson networks, as defined in~\cite{sanka16}. Using the main result of~\cite{sanka16}, we know that the cut-off arrival rate is equal to $\frac{\ell(r)}{K \bar{L} \langle \ell_\D \rangle}$ for $\Phi_{d,t}$ and to $\frac{K\ell(r)}{\underline{L} \langle \ell_\D \rangle}$ for $\Phi_{u,t}$. 

We apply Theorem~\ref{thm:domination}, which states that $\Phi_{u,t}$ dominates $\Phi_t$ and $\Phi_{d,t}$ is dominated by $\Phi_t$ to obtain the intended inequality. Since $\frac{\ell(r)}{K \bar{L} \langle \ell_\D \rangle}> 0$ and $\frac{K\ell(r)}{\underline{L} \langle \ell_\D \rangle} < \infty$, we can conclude that $0 < \lambda_c^-$ and  $\lambda_c^+ < \infty$.
\end{proof}

\section{Main results}\label{sec:stab}

In this section, we study the stability of the system through stochastic domination and the use of a discretization of the dynamics of the SBD process. We state the following theorem:

\begin{theorem}\label{thm:stability}

	In the symmetric setup, $\lambda_c^- = \lambda_c^+ \triangleq \lambda_c$, where $\lambda_c$ is the \emph{critical} arrival rate, equal to:

	\begin{equation}
		\lambda_c = \frac{K \ell(r)}{\langle \ell_\D \rangle \mathfrak{L}},
	\end{equation}

	\noindent where $\mathfrak{L} \triangleq \sum_C p_C \lvert C \rvert L_C$.
	
\end{theorem}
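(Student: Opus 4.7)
The plan is to combine the stochastic monotonicity of Theorem~\ref{thm:domination} with a spatial discretization of the torus $\D$ into small square cells, constructing two multiclass interference queueing networks that sandwich $\Phi_t$ in the partial order $\leq$ (this is the program announced in Section~\ref{ssec:discretization}). The outer network replaces the positions of users inside each cell by the intra-cell location minimizing the path-loss to the reference cell, so that its interference is larger and its departure rate $R$ is smaller than that of $\Phi_t$; by Theorem~\ref{thm:domination} it dominates $\Phi_t$. The inner network uses the intra-cell location maximizing the path-loss, and is dominated by $\Phi_t$. Both are finite-dimensional Markov jump processes whose queues are indexed by (cell, class) pairs, and their rates are explicit piecewise-linear functions of the queue-length vector.

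For the upper bound on $\lambda_c^+$, I would use the fluid-limit methodology of Section~\ref{sec:fluid} together with Dai's criterion (\cite{dai95}): establish that every fluid solution of the outer network hits the origin in finite time when $\lambda < \lambda_c$, yielding positive recurrence of the outer network and hence of $\Phi_t$. The symmetry assumption of Section~\ref{ssec:symmetric} (invoked through Lemma~\ref{lem:symmetric}) is critical at this step: it guarantees that classes $C$ with the same $|C|$ share identical $p_C$ and $L_C$, so that the fluid equations collapse into variables indexed by $j = |C|$, and the $\binom{K}{j}$ combinatorial factors combine neatly. A direct candidate Lyapunov function of the form $\sum_{\text{cells}}\sum_C p_C |C| L_C\, z_C$ should have drift
\begin{equation*}
	\lambda \mathfrak{L} - \frac{K\ell(r)}{\langle \ell_\D \rangle},
\end{equation*}
negative precisely when $\lambda < \lambda_c$, using the exchangeability of bands to evaluate $\E[|C_x \cap C_z|]$ in terms of $|C_x||C_z|/K$.

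For the lower bound on $\lambda_c^-$, I would exploit the domination of the inner network by $\Phi_t$ together with a second coupling reducing the inner network to a single M/M/1 queue tracking a symmetric weighted mass $\sum_C p_C |C| L_C\, \Phi_{C,t}(\D)$. Using Theorem~\ref{thm:domination} in its direction $iii)$ (replacing the interference coming from distant cells by an upper bound that saturates at one fixed cell), one obtains an M/M/1 whose arrival rate is $\lambda \mathfrak{L}$ and whose service rate is $K\ell(r)/\langle \ell_\D \rangle$; this queue is unstable as soon as $\lambda > \lambda_c$, and instability propagates back to the inner network and thus to $\Phi_t$.

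The main obstacle is the tightness of the sandwich: the outer and inner discretizations a priori produce different critical rates at every fixed cell size, so one must let the mesh tend to $0$ and check that both rates converge to the same $\lambda_c$. This hinges on two estimates that the symmetry hypothesis makes tractable: first, that the worst- and best-case intra-cell path-losses both converge to $\ell(\lVert x \rVert)$ uniformly on $\D$, so that the shot-noise sums converge to $\langle \ell_\D \rangle$ times the spatial density; second, that after the combinatorial collapse forced by symmetry, the drift of the Lyapunov function is the \emph{same} affine function of $\lambda$ for both bounding networks, equal to the expression displayed above. Without the symmetry assumption of Section~\ref{ssec:symmetric}, the two bounds would retain asymmetric combinatorial weights and the closed form $\lambda_c = K\ell(r)/(\langle \ell_\D\rangle \mathfrak{L})$ would not emerge.
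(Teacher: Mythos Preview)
Your overall architecture --- discretize $\D$ into $\varepsilon$-cells, sandwich $\Phi_t$ between an outer network $\bar{\mathbf X}_\varepsilon$ (cell-wise maximal path-loss) and an inner network $\underline{\mathbf X}_\varepsilon$ (cell-wise minimal path-loss), analyse each with finite-dimensional tools, and let $\varepsilon\to 0$ --- is exactly the paper's program, and you correctly locate the role of symmetry via Lemma~\ref{lem:symmetric}. (A cosmetic point: your labels are reversed. Stability of the dominating network yields a \emph{lower} bound on $\lambda_c^-$; instability of the dominated network yields an \emph{upper} bound on $\lambda_c^+$.) There are, however, two genuine technical gaps.

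For stability, the linear functional $V=\sum_i\sum_C p_C|C|L_C\, x_{i,C}$ does \emph{not} have the constant drift you claim along the fluid ODE of Theorem~\ref{thm:FSP}: the departure contribution is a sum of ratios $\frac{|C|x_{i,C}}{\sum_{k,U}|C\cap U|\ell^\varepsilon(a_k,a_i)x_{k,U}}$ and remains state-dependent unless the fluid state is simultaneously class-symmetric and spatially constant, which Dai's criterion does not let you assume. The paper avoids this by invoking the Shneer--Stolyar criterion (\cite{shneer20}, restated as Lemma~\ref{lem:shneer}): one only needs a single vector $\mathbf z$ with $\lambda p_C\varepsilon^2\le \frac{|C|z_{i,C}}{L_C\sum_{k,U}|C\cap U|\ell^\varepsilon(a_k,a_i)z_{k,U}}$ for all $i,C$, and the choice $z_{i,C}=p_CL_C$ together with Lemma~\ref{lem:symmetric} gives exactly $\lambda\le K/(\mathfrak L\langle\ell^\varepsilon_\D\rangle)$. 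For instability, the weighted mass $\sum_C p_C|C|L_C\,\Phi_{C,t}(\D)$ is not itself Markov, and replacing interference by an \emph{upper} bound decreases $R$, producing a process that dominates (not is dominated by) $\underline{\mathbf X}_\varepsilon$ --- the wrong direction for transience. The paper instead argues queue by queue: it shows (Lemma~\ref{lem:system}) that $z_{i,C}=p_CL_C$ also realises $\max_{\mathbf z}\min_{i,C}\frac{|C|z_{i,C}}{p_CL_C\varepsilon^2\sum_{k,U}|C\cap U|\ell_\varepsilon z_{k,U}}$, and then (Lemma~\ref{lem:instability}) couples the single queue $(i^\star,C^\star)$ of $\underline{\mathbf X}_\varepsilon$ \emph{below} an M/M/1 whose constant service rate is this max-min value; when $\lambda>\underline\lambda_\varepsilon$ that M/M/1 is transient, hence so is $\underline{\mathbf X}_\varepsilon$.
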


The proof for this result happens in three steps. In a first step, we define a discretization of the dynamics to reduce the study of the jump process $\Phi_t$ to the study of two adequate queueing networks, one that bound the dynamics from above and another on that bounds the dynamics from below. We then move on to proving the stability of the former network and the instability of the latter to obtain two bounds for the critical arrival rate of the network, which coincide to the value of $\lambda_c$.

\subsection{Discretization of the dynamics}\label{ssec:discretization}

To study our dynamics, we introduce the following discretization: for $\varepsilon > 0$, we tessellate $\D$ in $N_\varepsilon$ square cells of side length $\varepsilon$, such that the origin is at the center of its cell. We denote by $A_i$ the cell centered at $a_i \in \D$. Finally, we introduce the stochastic process $\mathbf{X}_\varepsilon(t) = (X_{i,C}(t))_{0\leq i \leq N_\varepsilon -1, C \in \Pk}$, where for all $i, C$, $X_{i,C}(t) = \Phi_{t,C}(A_i)$ is the number of receiver-transmitter pairs of type $C$ in cell $i$. 

We now use this discretization of the dynamics to define two \emph{interference queueing networks} (as defined in~\cite{sanka19}) $\bar{\mathbf{X}}_{\varepsilon}$ and $\underline{\mathbf{X}}_{\varepsilon}$ with state space $\N^{N_\varepsilon \times 2^K-1}$ such that:

\begin{itemize}
	\item $\bar{\mathbf{X}}_{\varepsilon}$ dominates $\mathbf{X}_\varepsilon$
	\item $\underline{\mathbf{X}}_{\varepsilon}$ is dominated by $\mathbf{X}_\varepsilon$.
\end{itemize}

Let $\bar{X}_{i,C}(t)$ and $\underline{X}_{i,C}(t)$ denote the respective number of users of type $C$ in cell $i$ in each of the two processes. For a given configuration of each process, we define the dynamics as follows:

\begin{itemize}
	\item The birth process of $\bar{\mathbf{X}}_\varepsilon$ is a Poisson rain of intensity $\lambda \lvert \D \rvert$ and the death rate for users in cell $i$ of class $C$ at time $t$ is $\frac{\bar{X}_{i,C}(t)}{L_C}\bar{R}_{i,C}(t)$
	\item The birth process of $\underline{\mathbf{X}}_\varepsilon$ is a Poisson rain of intensity $\lambda \lvert \D \rvert$ and the death rate for users in cell $i$ and class $C$ at time $t$ is $\frac{\underline{X}_{i,C}(t)}{L_C}\underline{R}_{i,C}(t)$.
\end{itemize}

To define the functions $\bar{R}_{i,C}$ and $\underline{R}_{i,C}$, we start by defining two path-loss functions $\ell^\varepsilon$ and $\ell_\varepsilon$ as follows:

\begin{equation*}
	\begin{cases}
		\ell^\varepsilon(x, y) = \ell^\varepsilon(a_i, a_j) \hfill \text{ if } x \in A_i, y \in A_j\\
		\ell^\varepsilon(a_i, a_j) = \max\left\{\ell(\lVert b_i - b_j\rVert), b_j \in \mathcal{V}_j, b_i \in \mathcal{V}_i \right\},&
	\end{cases}	
\end{equation*}

\noindent and

\begin{equation*}
	\begin{cases}
		\ell_\varepsilon(x, y) = \ell_\varepsilon(a_i, a_j) \hfill \text{ if } x \in A_i, y \in A_j\\
		\ell_\varepsilon(a_i, a_j) = \min\left\{\ell(\lVert b_i - b_j\rVert), b_j \in \mathcal{V}_j, b_i \in \mathcal{V}_i \right\},
	\end{cases}
\end{equation*}

\noindent where, for $0 \leq i \leq N_\varepsilon - 1$, we define $\mathcal{V}_i = \left\{b_i, \lVert b_i - a_i \rVert \in \{0,\varepsilon\}\right\}$. Using the dominated convergence theorem, we get:

\begin{equation}\label{eq:integral}
	\lim_{\varepsilon \rightarrow 0^+} \langle \ell^\varepsilon_\D \rangle = \lim_{\varepsilon \rightarrow 0^+} \langle \ell_{\varepsilon,\D} \rangle = \langle \ell_\D \rangle,
\end{equation}

\noindent where $\langle\cdot_\D \rangle$ is defined in~\eqref{eq:N_D}. Because of the square torus topology of $\D$, we have, for all $i$:

\begin{equation}\label{eq:ell_epsilon}
\sum_{k=0}^{N_\varepsilon -1} \ell^\varepsilon(a_i, a_k) = \sum_{k=0}^{N_\varepsilon -1} \ell^\varepsilon(0, a_k) = \frac{1}{\varepsilon^2} \langle \ell^\varepsilon_\D \rangle.
\end{equation}

The same result holds for $\ell_\varepsilon$.

The interferences $\bar{I}_{i,C}$ and $\underline{I}_{i,C}$ experienced by users in cell $i$ of class $C$ and with path-loss functions $\ell^\varepsilon$ and $\ell_\varepsilon$ are respectively equal to:

\begin{align*}
\bar{I}_{i,C}(t) &= \sum_{k, U}\lvert C \cap U \rvert \ell^\varepsilon(a_k, a_i) \left(\bar{X}_{k,U}(t) - \mathbf{1}_{\{U = C, i = k\}}\right) \\
\underline{I}_{i,C}(t) &= \sum_{k, U}\lvert C \cap U \rvert \ell^\varepsilon(a_k, a_i) \left(\underline{X}_{k,U}(t) - \mathbf{1}_{\{U = C, i = k\}}\right).
\end{align*}

Finally, the rate-of-transmission functions of users in cell $i$ of class $C$ are defined by:

\begin{align*}
\bar{R}_{i,C}(t) &= \frac{\lvert C \rvert}{\mathcal{N}_0 + \bar{I}_{i,C}(t)} \\
\underline{R}_{i,C}(t) &= \frac{\lvert C \rvert}{\mathcal{N}_0 + \underline{I}_{i,C}(t)}.
\end{align*}

\begin{theorem}\label{thm:up_down}
	$\bar{\mathbf{X}}_\varepsilon$ and $\underline{\mathbf{X}}_\varepsilon$ are two irreducible Markov jump processes with state space $\N^{N_\varepsilon \times 2^K-1}$. Moreover, $\bar{\mathbf{X}}_\varepsilon$ stochastically dominates $\mathbf{X}_\varepsilon$, and $\underline{\mathbf{X}}_\varepsilon$ is stochastically dominated by $\mathbf{X}_\varepsilon$.
\end{theorem}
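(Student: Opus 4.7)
The plan is to prove Theorem~\ref{thm:up_down} in two parts: first the Markov and irreducibility statements, then the stochastic domination, by adapting the coupling/Poisson embedding argument already used for Theorem~\ref{thm:domination}.

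For the Markov/irreducibility part, I would first observe that both $\bar{\mathbf{X}}_\varepsilon$ and $\underline{\mathbf{X}}_\varepsilon$ live on the countable set $\N^{N_\varepsilon \times (2^K-1)}$, with transition rates that depend only on the current configuration: per-cell, per-class birth rates $\lambda p_C \varepsilon^2$ (since the spatial Poisson arrivals are thinned by the class distribution and the uniform draw of the cell) and per-cell, per-class death rates $\tfrac{\bar{X}_{i,C}}{L_C} \bar{R}_{i,C}$ (resp.\ with underscore). These rates are bounded on any bounded set of configurations, hence define a proper continuous-time Markov jump process. For $\phi$-irreducibility, the same two-step argument as in Theorem~\ref{thm:phi_irreducible} applies but is simpler because the state space is discrete: from any state, the probability that a fixed sequence of deaths empties the network before the next arrival is strictly positive (the death rates are strictly positive whenever the configuration is nonempty, thanks to $\mathcal{N}_0>0$ and $\ell^\varepsilon<\infty$); and from the empty configuration, any target state in $\N^{N_\varepsilon \times (2^K-1)}$ is reached with positive probability by a finite sequence of arrivals in the prescribed cells and classes before any death can occur.

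For the domination, I would couple the three processes on a single probability space. All arrivals are driven by one common class-marked spatial Poisson rain on $\D$ of intensity $\lambda$; an arrival at $(x,C)$ with $x \in A_i$ is interpreted as an arrival at $x$ in $\Phi_t$ and as a unit increment of the coordinate $(i,C)$ in both $\bar{\mathbf{X}}_\varepsilon$ and $\underline{\mathbf{X}}_\varepsilon$. For each triple (cell $i$, class $C$, existing user) the potential death times are embedded, via Lemma~3 of~\cite{brema96}, into a single Poisson process $\mathcal{N}_{i,C}$ of unit intensity on $\R_+$, with only the appropriate stochastic rate thinning distinguishing the three processes. The proof then proceeds by induction on successive event times: assuming the coordinate-wise inequalities
\begin{equation*}
\underline{X}_{i,C}(t^-) \;\leq\; X_{i,C}(t^-) \;\leq\; \bar{X}_{i,C}(t^-),\qquad i,C,
\end{equation*}
hold at the moment just before the next event, I check that each of the four possible event types (arrival, death in $\Phi_t$, death in $\bar{\mathbf{X}}_\varepsilon$, death in $\underline{\mathbf{X}}_\varepsilon$) preserves them. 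Arrivals preserve the inequalities trivially since they increment all three processes simultaneously at the same $(i,C)$.

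The core computation lies in comparing the death rates. By construction of $\ell^\varepsilon$ and $\ell_\varepsilon$, for any $x \in A_i$ and $y \in A_k$ one has $\ell_\varepsilon(a_k,a_i) \leq \ell(\lVert x-y \rVert) \leq \ell^\varepsilon(a_k,a_i)$. Combined with the induction hypothesis, this yields, for any user of class $C$ with receiver in cell $i$,
\begin{equation*}
\underline{I}_{i,C}(t^-) \;\leq\; I(x,\Phi_{t^-}) \;\leq\; \bar{I}_{i,C}(t^-),
\end{equation*}
up to the benign self-interference correction $\mathbf{1}_{\{U=C,\,i=k\}}$, which is accounted for in the definition of the discretized interferences. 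Dividing $|C|$ (with $\ell(r)$ absorbed in the normalization) by these quantities flips the inequality, so the aggregated class-$C$ cell-$i$ death rate in $\bar{\mathbf{X}}_\varepsilon$ is at most that in $\mathbf{X}_\varepsilon$ restricted to users with receiver in $A_i$ of class $C$, which is itself at most that in $\underline{\mathbf{X}}_\varepsilon$. Under the Poisson embedding, this means that any jump of $\mathcal{N}_{i,C}$ triggering a death in $\bar{\mathbf{X}}_\varepsilon$ has already triggered one in $\mathbf{X}_\varepsilon$ (picking for removal any user with receiver in $A_i$, class $C$), and similarly any death in $\mathbf{X}_\varepsilon$ of a user in $(A_i,C)$ has already been matched by a death in $\underline{\mathbf{X}}_\varepsilon$. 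The coordinate-wise inequalities are therefore preserved across every event. The main obstacle I anticipate is the bookkeeping at this step, in particular ensuring that the Poisson embedding is performed \emph{per cell and per class} rather than globally, since the stochastic domination has to hold coordinate by coordinate in $\N^{N_\varepsilon\times(2^K-1)}$ and not merely for the total count; once the coupling is set up at that granularity, however, the induction closes and the theorem follows.
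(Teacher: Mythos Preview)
Your approach is the same as the paper's: irreducibility via the two-step argument of Theorem~\ref{thm:phi_irreducible}, and domination via the coupling/Poisson-embedding argument of Theorem~\ref{thm:domination}, which is exactly what the paper invokes. The setup and the interference comparison $\underline{I}_{i,C}\le I(x,\Phi)\le \bar{I}_{i,C}$ under the induction hypothesis are correct.

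There is, however, a genuine gap in the step where you pass to \emph{aggregated} death rates. You assert that the total class-$C$, cell-$i$ death rate satisfies
\[
\frac{\bar X_{i,C}(t^-)}{L_C}\,\bar R_{i,C}(t^-)\;\le\;\frac{1}{L_C}\sum_{x\in\Phi_{C,t^-}\cap A_i}R(x,\Phi_{t^-}),
\]
but this does not follow from $\bar X\ge X$ and $\bar R_{i,C}\le R(x,\Phi)$: you are multiplying the \emph{smaller} per-user rate by a \emph{larger} count, and the product can overshoot (e.g.\ $\bar X_{i,C}=2$, $X_{i,C}=1$, $\bar R_{i,C}=1$, $R(x)=3/2$). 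With a single Poisson clock $\mathcal N_{i,C}$ per cell--class, as you explicitly propose, the ordering can then break at the first event. The fix is either (i) to split cases---when $\bar X_{i,C}(t^-)=X_{i,C}(t^-)$ the aggregate inequality \emph{does} hold and a death in $\bar{\mathbf X}_\varepsilon$ is matched in $\mathbf X_\varepsilon$, while when $\bar X_{i,C}(t^-)>X_{i,C}(t^-)$ an unmatched death in $\bar{\mathbf X}_\varepsilon$ is harmless---or, more cleanly, (ii) to perform the embedding \emph{per user} rather than per cell--class, viewing $\bar{\mathbf X}_\varepsilon$ as the cell-count of a spatial process with rate function $\bar R$. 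This is what the paper does implicitly by invoking Theorem~\ref{thm:domination}: with per-user clocks, the per-user inequality $\bar R_{i,C}\le R(x,\Phi)$ directly yields that each user dies in $\Phi$ no later than in the spatial $\bar{\mathbf X}_\varepsilon$-process, so pointwise inclusion, and hence coordinate-wise domination of the counts, is preserved across every event.
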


\begin{proof}
	From the definition of the function $\ell^\varepsilon$ and $\ell_\varepsilon$, we know that for all $x, y \in \D$, we have:

	\begin{equation*}
		\ell_\varepsilon(x,y) \leq \ell(\lVert x - y \rVert) \leq \ell^\varepsilon(x,y).
	\end{equation*}

	From this definition, we can obtain that for all $C \in \Pk$, $0 \leq i \leq N_\varepsilon - 1$, $x \in A_{i,C}$ and a given network configuration $\Phi_t$, we have:

	\begin{equation*}
		\bar{R}_{i,C}(t) \leq R(x, \Phi_t) \leq \underline{R}_{i,C}(t).
	\end{equation*}

	To obtain the domination relation, we can now apply Theorem~\ref{thm:domination}. Irreducibility is obtained by developing the same argument as for Theorem~\ref{thm:phi_irreducible}, which concludes the proof.
\end{proof}

Using Theorem~\ref{thm:up_down}, we can now reduce the study of the SBD process $\Phi_t$ to the study of the stability of the two processes $\underline{\mathbf{X}}_\varepsilon$ and $\bar{\mathbf{X}}_\varepsilon$. In the next section, we introduce a framework to obtain a condition on the arrival rate such that the former is unstable, and the latter is stable.

\subsection{Fluid limits and fluid model}\label{sec:fluid}

From the definitions of the dynamics in our system, we can establish an equation for the dynamics of $\bar{\mathbf{X}}_\varepsilon$, which will allow us to obtain a condition on $\lambda_c^-$ for the stability of the system.

Let $(\mathcal{A}_{i,C})$ and $(N_{i,C})$ be two families of independent Poisson processes with intensity 1. By definition of the dynamics of the discretized systems, arrivals of users in cell $i$ of class $C$ happen with rate $\lambda p_C \varepsilon^2$. If the system is in state $x = (x_{i,C}) \in \N^{N_\varepsilon \times 2^K-1}$ at time $t$, users in cell $i$ and class $C$ have a departure rate equal to $\frac{1}{L_C}x_{i,C}(t)\bar{R}_{i,C}(t)$. We thus obtain the following equation ruling the evolution of the population in the chain $\bar{\mathbf{X}}_\varepsilon$:

\begin{align}
	\bar{X}_{i,C}(t) = &\bar{X}_{i,C}(0) + \mathcal{A}_{i,C}(\lambda p_C \varepsilon^2t) \nonumber \\
	&- N_{i,C}\left( \frac{1}{L_C}\int_0^t \bar{X}_{i,C}(u)\bar{R}_{i,C}(u)\diff u\right).\label{eq:dynamic}
\end{align}

The goal of this section is to find a condition on $\lambda$ so that the Markov chain $\bar{\mathbf{X}}_\varepsilon(t)$ is positive recurrent. Using~\eqref{eq:dynamic}, we get the following theorem for fluid limits in our discretized network:

\begin{theorem}\label{thm:FSP}

	The fluid limits for the chain $\bar{\mathbf{X}}_\varepsilon$ exist and are solutions to the following system of equations:

	\begin{equation*}
		\begin{cases}
			\bar{x}'_{i,C}(t) = \lambda p_C \varepsilon^2 - \frac{1}{L_C}\frac{\lvert C\rvert \bar{x}_{i,C}(t)}{\sum_{k, U} \lvert C \cap U \rvert \ell^\varepsilon(a_k, a_i) \bar{x}_{k,U}} \hfill \textrm{ if } \bar{\mathbf{x}}(t) \neq 0 \\
			\bar{\mathbf{x}}_0 = \bar{\mathbf{X}}_\varepsilon(0)
		\end{cases}.
	\end{equation*}
	
\end{theorem}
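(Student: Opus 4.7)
The plan is to follow the standard fluid limit program of Rybko–Stolyar and Dai: take a sequence of initial configurations whose total mass $|\bar{\mathbf{X}}_\varepsilon^n(0)|=n$ grows to infinity, define the rescaled processes $\bar{\mathbf{x}}^n(t)=\frac{1}{n}\bar{\mathbf{X}}_\varepsilon^n(nt)$, and show that (i) the sequence $(\bar{\mathbf{x}}^n)_n$ is almost surely relatively compact in the sup-norm on compact intervals, and (ii) every subsequential limit satisfies the claimed ODE. The starting point is the representation \eqref{eq:dynamic}, which I would rescale to obtain
\begin{equation*}
\bar{x}^n_{i,C}(t)=\bar{x}^n_{i,C}(0)+\tfrac{1}{n}\mathcal{A}_{i,C}(n\lambda p_C\varepsilon^2 t)-\tfrac{1}{n}N_{i,C}\!\left(\tfrac{n}{L_C}\int_0^{t}\bar{x}^n_{i,C}(u)\,\bar{R}^n_{i,C}(u)\,\mathrm{d}u\right),
\end{equation*}
where $\bar{R}^n_{i,C}(u)=|C|\big/\bigl(\mathcal{N}_0+n\sum_{k,U}|C\cap U|\ell^\varepsilon(a_k,a_i)\bar{x}^n_{k,U}(u)-|C|\ell^\varepsilon(a_i,a_i)\bigr)$.

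For tightness, I would first invoke the functional strong law of large numbers for the unit-rate Poisson processes $\mathcal{A}_{i,C}$ and $N_{i,C}$: almost surely, $\frac{1}{n}\mathcal{A}_{i,C}(ns)\to s$ and $\frac{1}{n}N_{i,C}(ns)\to s$ uniformly on compact sets. Since departure rates are bounded above by $\frac{|C|}{L_C\mathcal{N}_0}$ per user and the per-cell arrival rate is constant, one obtains a deterministic linear upper bound $\bar{x}^n_{i,C}(t)\le \bar{x}^n_{i,C}(0)+\lambda p_C\varepsilon^2 t+o(1)$, which together with the FSLLN yields uniform Lipschitz bounds on $\bar{\mathbf{x}}^n$ off a null set. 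Arzelà–Ascoli then delivers relative compactness, and by Skorokhod representation we can assume a.s.\ uniform convergence on compacts along a subsequence to some absolutely continuous $\bar{\mathbf{x}}$.

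The main obstacle is passing to the limit in the nonlinear death term when $\bar{\mathbf{x}}(t)\neq 0$. The idea is that on any subinterval where $\bar{\mathbf{x}}(t)$ stays in a compact set away from $0$, the interference denominator $n\sum_{k,U}|C\cap U|\ell^\varepsilon(a_k,a_i)\bar{x}^n_{k,U}(u)$ is of order $n$, so the thermal noise $\mathcal{N}_0$ becomes negligible and
\begin{equation*}
\bar{x}^n_{i,C}(u)\,\bar{R}^n_{i,C}(u)\longrightarrow \frac{|C|\,\bar{x}_{i,C}(u)}{\sum_{k,U}|C\cap U|\ell^\varepsilon(a_k,a_i)\bar{x}_{k,U}(u)}
\end{equation*}
uniformly on that subinterval. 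Combined with the FSLLN for $N_{i,C}$ and dominated convergence, this yields the integral form of the target ODE on every such interval; differentiating almost everywhere (using absolute continuity) gives the stated equation wherever $\bar{\mathbf{x}}(t)\neq 0$.

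A secondary technical point is handling the set $\{t:\bar{\mathbf{x}}(t)=0\}$, where the denominator vanishes and the ODE is not claimed to hold; here I would simply note that the statement restricts the ODE to $\bar{\mathbf{x}}(t)\ne 0$, and that elsewhere the limit trajectory is still described by the integral equation with a death term that can be controlled by the trivial bound $\bar{x}_{i,C}(u)\bar{R}^n_{i,C}(u)\le |C|\bar{x}_{i,C}(u)/\mathcal{N}_0$, so no pathology arises. Once the ODE is verified on the open set where $\bar{\mathbf{x}}\ne 0$, local existence of the fluid limit follows by construction, and uniqueness of the subsequential limit (hence convergence of the whole sequence) will come from standard Cauchy–Lipschitz arguments applied to the locally Lipschitz right-hand side away from the origin.
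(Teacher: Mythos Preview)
Your outline is correct and follows the same standard fluid-limit scheme as the paper (rescale via \eqref{eq:dynamic}, apply the functional SLLN to the driving Poisson processes, establish C-tightness, use Skorokhod representation, pass to the limit with dominated convergence, and identify the integral/ODE form). The only substantive difference is in how you control the departure term. The paper observes that the denominator of $\bar R^n_{i,C}$ always contains the self-interference contribution $|C|\,\ell^\varepsilon(a_i,a_i)\,\bar x^n_{i,C}$ with $\ell^\varepsilon(a_i,a_i)=\ell(0)=1$, which yields the clean, state-independent bound
\[
\bar x^n_{i,C}(u)\,R^n_{i,C}\!\bigl(\bar{\mathbf x}^n(u)\bigr)\le 1,
\]
so that the time-changed argument of $N_{i,C}$ is at most $nt/L_C$. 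This single inequality gives both the Lipschitz modulus for C-tightness and a dominating function for the limit, without needing any a priori bound on $\bar{\mathbf x}^n$ or recourse to $\mathcal N_0$. Your route---bounding $\bar R^n\le |C|/\mathcal N_0$ and then feeding in the linear upper envelope on $\bar x^n$---also works, but it ties all constants to the thermal noise and requires an extra step; the paper's self-interference trick is the cleaner device and is worth noting since it is exactly what makes the argument go through uniformly, including near the set $\{\bar{\mathbf x}=0\}$.
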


The proof for this result is detailed in Appendix A. It is to note that these dynamics are not well-defined in the case $\bar{\mathbf{x}}(t) \neq 0$, but we do not need such a definition: the results about stability and fluid limits we use involve fluid limits with a strictly positive initial condition. The structure of this proof is adapted from~\cite{reed14}, where the authors propose a method to prove the existence and uniqueness of fluid limits for a given range of dynamics. Fluid limits prove to be powerful tools in order to study the stability and instability of queueing network.

\subsection{Stability for symmetric multiclass wireless networks}

We start by studying the stability of the chain $\bar{\mathbf{X}}_\varepsilon$, which gives us an lower bound for $\lambda_c^-$. We have the following theorem:

\begin{theorem}\label{thm:forward}
	Let $\varepsilon > 0$ and $\bar{\lambda}_\varepsilon = \frac{K}{\mathfrak{L}\langle \ell^\varepsilon_\D \rangle}$. If $\lambda < \bar{\lambda}_\varepsilon$, then the chain $\bar{\mathbf{X}}_\varepsilon$ is stable.
\end{theorem}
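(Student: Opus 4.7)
The plan is to invoke the fluid-limit stability criterion of Dai~\cite{dai95}: since the chain $\bar{\mathbf{X}}_\varepsilon$ is already known to be irreducible (Theorem~\ref{thm:up_down}), it suffices to exhibit a deterministic $T>0$ such that every fluid-limit trajectory $\bar{\mathbf{x}}(\cdot)$ constructed in Theorem~\ref{thm:FSP} with $|\bar{\mathbf{x}}(0)|=1$ satisfies $\bar{\mathbf{x}}(T)=0$. The natural Lyapunov candidate is the weighted workload
\[
V(\bar{\mathbf{x}}) \;=\; \sum_{i,C} |C|\,L_C\,\bar{x}_{i,C},
\]
whose weights $|C|L_C$ are tuned exactly so that the birth contribution to $\dot V$ equals $\lambda\,|\D|\,\mathfrak{L}$, matching the constant that defines $\bar{\lambda}_\varepsilon$.

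Differentiating $V$ along the ODE of Theorem~\ref{thm:FSP} and using $N_\varepsilon \varepsilon^2 = |\D|$ gives
\[
\dot V(\bar{\mathbf{x}}(t)) \;=\; \lambda\,|\D|\,\mathfrak{L} \;-\; \sum_{i,C}\frac{|C|^2\,\bar{x}_{i,C}(t)}{I_{i,C}(t)}.
\]
At this point the symmetry hypothesis of Section~\ref{ssec:symmetric} enters decisively. Since classes of equal cardinality are exchangeable and bands are drawn uniformly at random, $\sum_{U:|U|=u}|C\cap U|=|C|\binom{K-1}{u-1}$, so averaging inside $I_{i,C}$ collapses it to $I_{i,C}=(|C|/K)\,T_i$, where $T_i=\sum_k \ell^\varepsilon(a_k,a_i)\,M_k$ and $M_k=\sum_C |C|\,\bar{x}_{k,C}$ is the total band-usage in cell $k$. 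This reduces the drift to the much cleaner expression
\[
\dot V(\bar{\mathbf{x}}(t)) \;=\; \lambda\,|\D|\,\mathfrak{L} \;-\; K\sum_i \frac{M_i(t)}{T_i(t)}.
\]
The remaining task is the inequality $\sum_i M_i/T_i \geq |\D|/\langle\ell^\varepsilon_\D\rangle$, which would give $\dot V \leq -\delta$ for some $\delta=\delta(\bar{\lambda}_\varepsilon-\lambda)>0$ whenever $\lambda<\bar{\lambda}_\varepsilon$, so that $V(\bar{\mathbf{x}}(t))=0$ for $t\geq V(\bar{\mathbf{x}}(0))/\delta$ and Dai's criterion applies. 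The key algebraic input is the torus identity~\eqref{eq:ell_epsilon}, $\sum_i T_i=(\langle\ell^\varepsilon_\D\rangle/\varepsilon^2)\sum_i M_i$, which also says the all-ones vector is the Perron eigenvector of the kernel $(\ell^\varepsilon(a_i,a_k))_{i,k}$ with eigenvalue $\langle\ell^\varepsilon_\D\rangle/\varepsilon^2$. Combined with a Cauchy--Schwarz/weighted AM--HM estimate of the quadratic form $\mathbf{M}^\top L\,\mathbf{M}$, this should deliver the required bound.

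\textbf{Main obstacle.} The technical heart sits in the sharp bound on $\sum_i M_i/T_i$. A direct Cauchy--Schwarz only yields $\sum_i M_i/T_i \geq M^2/\sum_{i,k}\ell^\varepsilon(a_k,a_i)M_iM_k$, and the right-hand side dips strictly below $|\D|/\langle\ell^\varepsilon_\D\rangle$ for highly concentrated $\mathbf{M}$ (e.g.\ all mass in one cell, where the ratio degenerates to $1$). Closing this gap will likely require a spectral argument exploiting the toroidal translation-invariance of $\ell^\varepsilon$ to separate the uniform Perron mode from the oscillatory ones, or an enhancement of $V$ by a term penalising spatial non-uniformity, leveraging the fact that the uniform Poisson rain of arrivals homogenises the fluid state in bounded time. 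Handling non-symmetric initial conditions, where the simplification $I_{i,C}=(|C|/K)T_i$ is not automatic and one must argue through the exchangeability of classes of equal cardinality (possibly via a symmetrisation/coupling of the fluid trajectory), is a secondary subtlety that I expect to be manageable but not cost-free.
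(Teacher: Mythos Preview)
Your linear Lyapunov function $V(\bar{\mathbf x})=\sum_{i,C}|C|L_C\,\bar x_{i,C}$ cannot close the argument, and the obstacle you flag is not a technicality but a genuine failure of the method. The inequality $\sum_i M_i/T_i\ge |\D|/\langle\ell^\varepsilon_\D\rangle$ is simply false: your own one-cell counterexample gives $\sum_i M_i/T_i=1$ while the right-hand side equals $N_\varepsilon\varepsilon^2/\langle\ell^\varepsilon_\D\rangle$, which is arbitrarily large for small $\varepsilon$. No spectral decomposition of the kernel $\ell^\varepsilon$ rescues this, because the defect is structural: a sum-type Lyapunov function averages the drift over cells, and that average death rate drops well below the birth rate when the mass is spatially concentrated. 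The second gap you mention---that the reduction $I_{i,C}=(|C|/K)\,T_i$ requires the fluid state $\bar{\mathbf x}(t)$ itself to be class-symmetric, which it need not be for a generic initial condition with $|\bar{\mathbf x}(0)|=1$---is also real and compounds the problem.

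The paper sidesteps both difficulties by invoking a criterion of Shneer and Stolyar~\cite{shneer20}: for service rates $\psi_{i,C}(\mathbf x)=\frac{1}{L_C}\frac{|C|\,x_{i,C}}{\sum_{k,U}\ell^\varepsilon(a_k,a_i)|C\cap U|\,x_{k,U}}$ that are $0$-homogeneous and non-increasing in every off-coordinate, positive recurrence follows once one exhibits a \emph{single} strictly positive vector $\mathbf z$ with $\lambda p_C\varepsilon^2<\psi_{i,C}(\mathbf z)$ for every $(i,C)$. The paper takes $z_{i,C}=p_CL_C$, spatially uniform and class-symmetric; then the combinatorial identity of Lemma~\ref{lem:symmetric} applies to the \emph{fixed} vector $\mathbf z$ rather than to the running fluid state, and together with the torus identity~\eqref{eq:ell_epsilon} every inequality collapses to the single condition $\lambda<K/(\mathfrak L\,\langle\ell^\varepsilon_\D\rangle)$. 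The implicit Lyapunov function behind the Shneer--Stolyar argument is max-type, $\max_{i,C}x_{i,C}/z_{i,C}$: at the coordinate attaining the maximum, homogeneity plus monotonicity force $\psi_{i,C}(\mathbf x)\ge\psi_{i,C}(\mathbf z)>\lambda p_C\varepsilon^2$, so that coordinate has strictly negative drift regardless of how the remaining mass is distributed. This is precisely the mechanism that survives spatial concentration where your sum-type $V$ does not.
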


The proof for this result, relying on~\cite{shneer20}, is given in Appendix B.

We know from Theorem~\ref{thm:up_down} that the process $\bar{\mathbf{X}}_\varepsilon$ dominates the original dynamic. From Theorem~\ref{thm:domination}, we obtain that, for all $\varepsilon > 0$, $\lambda_c \geq \frac{K}{\mathfrak{L}\langle \ell^\varepsilon_\D \rangle}$. Taking the limit as $\varepsilon$ goes to 0 gives us:

\begin{equation*}
	\lambda_c^- \geq \frac{K}{\mathfrak{L}\langle \ell_\D \rangle}.
\end{equation*}

The stability of the fluid scaled model is only a necessary condition for the stability of the Markov chain and not a sufficient one: in~\cite{bramso99}, we have an example of a stable queueing system with an unstable fluid model. An idea to prove instability in the network would be to rely on the concept of \emph{weak instability}, (see Definition 4.1 from~\cite{dai96}). Theorem 4.2 from the same paper provides a result linking weak instability of a fluid limit model and the transience of the associated Markov chain.

Unfortunately, to use this result, we have to study fluid limits starting from 0, and the dynamics of our fluid limit model are not defined when $\bar{\mathbf{x}} = 0$. To obtain instability for the system, we will use stochastic domination and an adequate Markov chain to bound $\underline{\mathbf{X}}$ from below. We have the following theorem:

\begin{theorem}\label{thm:reciprocal}
	Let $\underline{\lambda}_\varepsilon = \frac{K}{\langle \ell_{\varepsilon,\D} \rangle \mathfrak{L}}$. In the symmetric case, if $\lambda > \underline{\lambda}_\varepsilon$, then $\underline{\mathbf{X}}_\varepsilon$ is unstable.
\end{theorem}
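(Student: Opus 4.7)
The plan is to exhibit a linear Lyapunov function whose drift is uniformly positive whenever $\lambda>\underline{\lambda}_\varepsilon$, and then invoke a Foster-type instability criterion (equivalently, stochastic domination at the level of the cumulative mass by an unstable M/M/1 queue). I take
\begin{equation*}
V(X)=\sum_{i,C}|C|L_{|C|}X_{i,C},
\end{equation*}
whose expected gain per arrival is exactly $\mathfrak{L}=\sum_C p_C|C|L_C$. Reading off the jump rates of $\underline{\mathbf{X}}_\varepsilon$ given before Theorem~\ref{thm:up_down}, the extended generator yields
\begin{equation*}
\mathcal{L}V(X)=\lambda|\D|\mathfrak{L}-\sum_{i,C}\frac{|C|^2 X_{i,C}}{\mathcal{N}_0+\underline{I}_{i,C}(X)}.
\end{equation*}
Everything therefore hinges on the key upper bound
\begin{equation*}
F(X):=\sum_{i,C}\frac{|C|^2 X_{i,C}}{\underline{I}_{i,C}(X)}\leq\frac{K|\D|}{\langle\ell_{\varepsilon,\D}\rangle}\qquad\text{for every }X\geq 0.
\end{equation*}

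To prove this inequality I would exploit two symmetries of the setup. First, $F$ is degree-$0$ homogeneous in $X$ and invariant under every permutation $\sigma\in\mathfrak{S}_K$ acting on class labels, since $|C|$, $|C\cap U|$, $p_C$ and $L_C$ all depend only on $|C|$ in the symmetric setup. Second, by~\eqref{eq:ell_epsilon} the row sums $L^*:=\sum_k\ell_\varepsilon(a_k,a_i)=\langle\ell_{\varepsilon,\D}\rangle/\varepsilon^2$ are independent of $i$, so the normalized kernel $\ell_\varepsilon/L^*$ is doubly stochastic and $F$ is invariant under cell translations on the torus. Combining these invariances with a Schur-type reduction, the supremum of $F$ is attained on configurations of the form $X_{i,C}=x_{|C|}$, uniform in $i$ and depending only on $|C|$. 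On such configurations, the combinatorial identities $\sum_{|U|=j}|C\cap U|=|C|\binom{K-1}{j-1}$ and $j\binom{K}{j}=K\binom{K-1}{j-1}$ give $\underline{I}_{i,C}=L^*|C|\sum_j\binom{K-1}{j-1}x_j$ and hence $F(X)=KN_\varepsilon/L^*=K|\D|/\langle\ell_{\varepsilon,\D}\rangle$ identically, which also confirms that the bound is sharp.

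With the bound at hand, $\mathcal{L}V(X)\geq|\D|\mathfrak{L}(\lambda-\underline{\lambda}_\varepsilon)>0$ uniformly in $X$ for $\lambda>\underline{\lambda}_\varepsilon$ (the $\mathcal{N}_0$ term only makes the drift larger). The chain $\underline{\mathbf{X}}_\varepsilon$ is irreducible on $\N^{N_\varepsilon\times(2^K-1)}$ by the argument of Theorem~\ref{thm:phi_irreducible}, and $V$ is nonnegative with bounded jump sizes, so a standard Foster--Lyapunov instability criterion yields transience. Since $\underline{\mathbf{X}}_\varepsilon$ is dominated by $\mathbf{X}_\varepsilon$ (Theorem~\ref{thm:up_down}), this also gives $\lambda_c^+\leq\underline{\lambda}_\varepsilon$, as needed to complete the program outlined after Theorem~\ref{thm:stability}.

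The main obstacle is the rigorous symmetrization step for $F$: individual summands $X_{i,C}/\underline{I}_{i,C}(X)$ are not jointly concave in $X$ (the Hessian of $x/(x+y)$ already has negative determinant), so Jensen's inequality is not available term by term. The reduction should nevertheless go through by combining the invariance of $F$ under class permutations and cell translations with the monotonic structure of the kernel $\ell_\varepsilon$ inherited from the monotonicity of $\ell$. Alternatively, one can verify via KKT that every uniform symmetric configuration is a critical point at which $F$ attains the claimed value, and then use degree-$0$ homogeneity together with the behaviour of $F$ on the boundary of the normalized simplex to rule out larger values elsewhere.
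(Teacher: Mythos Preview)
Your route via a linear Lyapunov function with uniformly positive drift is genuinely different from what the paper does. In Appendix~C the paper never bounds an aggregate quantity like your $F$; instead it studies the max--min problem
\[
\max_{\mathbf z>0}\ \min_{i,C}\ r_{i,C}(\mathbf z),\qquad
r_{i,C}(\mathbf z)=\frac{1}{p_CL_C\varepsilon^2}\,\frac{|C|\,z_{i,C}}{\sum_{k,U}\ell_\varepsilon(a_k,a_i)\,|C\cap U|\,z_{k,U}},
\]
shows by a coordinate-monotonicity argument (Lemma~\ref{lem:system}) that at any argmax all the $r_{i,C}$ coincide, checks that $z_{i,C}=p_CL_C$ is such a point (so the common value is $K/(\langle\ell_{\varepsilon,\D}\rangle\mathfrak L)$ via Lemma~\ref{lem:symmetric}), and then couples a \emph{single} coordinate of $\underline{\mathbf X}_\varepsilon$ from below by an unstable M/M/1 queue with that constant service rate (Lemma~\ref{lem:instability}). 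The two approaches are dual: the paper upper-bounds the death rate of one queue, you try to upper-bound the total weighted death rate.

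Your argument, however, has a real gap, and it is exactly the one you flag as ``the main obstacle'': the inequality $F(X)\le K|\D|/\langle\ell_{\varepsilon,\D}\rangle$ is the entire content of the proof, and none of the ideas you sketch actually establishes it. Invariance of $F$ under class permutations and torus translations does \emph{not} force the supremum onto the fixed-point set of those symmetries; a $G$-invariant, $0$-homogeneous function may very well attain its extremum on a nontrivial orbit (for a trivial illustration, $(x-y)^2/(x+y)^2$ is symmetric in $x,y$ and $0$-homogeneous but is \emph{minimised}, not maximised, at $x=y$). The KKT sketch has the same defect: exhibiting the symmetric configurations as critical points with value $KN_\varepsilon/L^*$ does not exclude other critical points with larger value, and the appeal to ``behaviour on the boundary of the normalised simplex'' is never carried out. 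Since you yourself note the summands are not jointly concave, no standard convexity or Schur-majorisation argument is available either. As it stands, the proposal is a plausible programme but not a proof; the paper sidesteps this global optimisation entirely by working with a single coordinate and a direct M/M/1 comparison.
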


The proof of this theorem is presented in Appendix C. We know that the network is unstable if $\lambda > \frac{K}{\langle \ell_{\varepsilon,\D} \rangle \mathfrak{L}}$ for each $\varepsilon > 0$. Taking the limit as $\varepsilon$ goes to 0, we conclude:

\begin{equation*}
	\lambda_c^+ \leq \frac{K}{\langle \ell_\D \rangle \mathfrak{L}}.
\end{equation*}

When combining the results of Theorem~\ref{thm:forward} and Theorem~\ref{thm:reciprocal}, we obtain that:

\begin{equation*}
	\lambda_c^- = \lambda_c^+ = \frac{K}{\langle \ell_\D \rangle \mathfrak{L}},
\end{equation*}

\noindent which concludes the proof of Theorem~\ref{thm:stability} in the case $r = 0$.

\subsection{Generalization to $r>0$}

To generalize the result of Theorem~\ref{thm:stability} to an arbitrary link length $r > 0$, we have to consider both the location of the transmitters and of the receivers in the system, because a transmitter will not necessarily located in the same cell as its receiver.

Let $\bar{\mathbf{Z}}_\varepsilon(t) = (Z_{i,C}(t))$ be the $N_\varepsilon \times 2^K-1$ matrix of transmitter locations in the dominating system and $\bar{\mathbf{M}}_\varepsilon(t)$ be such that $\bar{M}_{i,C}(t)$ is a $N_\varepsilon \times 2^K-1$ matrix whose coordinate $(j, D)$ denotes how many transmitters in cell $j$ of class $D$ have a receiver in cell $i$ of class $C$ at time $t$ in the network. 

The process $\bar{\mathbf{S}}(t) = (\bar{\mathbf{X}}_\varepsilon(t), \bar{\mathbf{Z}}_\varepsilon(t), \bar{\mathbf{M}}_\varepsilon(t))$ is a Markov chain with countable state space. The evolution of $\bar{\mathbf{S}}(t)$ is the following: to each receiver of class $C$ arriving in cell $i$ with rate $\lambda p_C \varepsilon^2$, we pick uniformly a point $x$ in the cell $A_i$. We then draw a circle with radius $r$ centered at $x$ and we pick a point $y$ uniformly at random on the circle, which decides the cell in which the transmitter is located. To obtain the interference seen in the network, we sum over the locations of the transmitters, i.e. the interference from~\eqref{eq:interference} involves the vector $\bar{\mathbf{Z}}_\varepsilon(t)$. When a receiver leaves the system, we delete uniformly at random a transmitter such that its receiver is located in the cell where the departure happened.

Using the definitions of the process $\bar{\mathbf{S}}(t)$ and the same steps as for Theorem~\ref{thm:fluid_equation}, we can obtain fluid limits for the process $\bar{\mathbf{X}}_\varepsilon(t)$ in this setup (which is symmetric):

\begin{equation*}
	\bar{x}'_{i,C}(t) = \lambda p_C \varepsilon^2 - \frac{1}{L_C}\frac{\lvert C \rvert \ell(r) \bar{x}_{i,C}(t)}{\sum_{k,U}\ell^\varepsilon(a_k, a_i) \lvert C \cap U \rvert \bar{z}_{k,U}(t)}.
\end{equation*}

For all $i, C$ and $t \geq 0$, let $i^\star_C(t)$ be such that $z_{i^\star_C(t), C} = \max_i z_{i,C}(t)$. The process $(\bar{z}_{i^\star_C(t), C})$ dominates $\bar{\mathbf{Z}}_\varepsilon$, and we have:

\begin{equation*}
	\bar{z}'_{i,C}(t) \leq \lambda p_C \varepsilon^2 - \frac{1}{L_C}\frac{\lvert C \rvert \ell(r) \bar{z}_{i,C}(t)}{\sum_{k,U}\ell^\varepsilon(a_k, a_i) \lvert C \cap U \rvert \bar{z}_{k^\star_U(t),U}(t)}.
\end{equation*}

We can now use similar arguments as for Theorem~\ref{thm:forward} to obtain the stability of the Markov process $\bar{\mathbf{S}}(t)$ under the condition:

\begin{equation*}
	\lambda_c \leq \frac{K\ell(r)}{\mathfrak{L}\langle \ell^\varepsilon_\D \rangle}.
\end{equation*}

To obtain the reciprocal, we define $i_C^\dagger(t)$ such that $\underline{Z}_{i_C^\dagger(t),C}(t) = \min_i \underline{Z}_{i,C}(t)$. We can then use the same arguments as developed for Theorem~\ref{thm:reciprocal} to obtain that the system is unstable if $\lambda > \frac{K \ell(r)}{\mathfrak{L}\langle \ell_{\varepsilon,\D} \rangle}$. This way, we get the reciprocal inequality and we can conclude that:

\begin{equation*}
	\lambda_c = \frac{K\ell(r)}{\mathfrak{L}\langle \ell_\D \rangle},
\end{equation*}

\noindent which concludes the proof of Theorem~\ref{thm:stability} in the case $r > 0$.

\section{Poisson heuristic for the critical arrival rate}

In this section, we provide a heuristic the user densities in the stationary regime in the system that will provide with a heuristic for the critical arrival rate in the system as the largest possible solution of a given system of equations.

\subsection{Poisson heuristic for stationary user densities}\label{ssec:poisson}

Let $\mu_C = \frac{1}{\lvert \D \rvert}\E\left[ \Phi_{C,0}(\D) \right]$ denote the user density of the point process $\Phi_{C,0}$ of users of class $C \in \Pk$ in the stationary regime. The first result we can state about the user densities is the following :

\begin{theorem}\label{thm:intensity_symm}
	In the symmetric system, the stationary user densities verify:

	\begin{equation*}
		\forall C, D \in \Pk, \quad \lvert C \rvert = \lvert D \rvert \Rightarrow \mu_C = \mu_D.
	\end{equation*}
\end{theorem}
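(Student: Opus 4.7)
The plan is to exploit the permutation invariance built into the symmetric network. Let $S_K$ denote the symmetric group on the set $[1,\dots,K]$ of frequency bands, and let $\sigma \in S_K$ act on $\Pk$ by $\sigma \cdot C = \{\sigma(i) : i \in C\}$. Note that this action preserves cardinality, i.e., $|\sigma \cdot C| = |C|$, and that it partitions $\Pk$ into orbits consisting exactly of the subsets of a fixed cardinality. Thus, given any two classes $C, D \in \Pk$ with $|C| = |D|$, one can choose $\sigma \in S_K$ with $\sigma \cdot C = D$, and it suffices to show that $\mu_C = \mu_{\sigma \cdot C}$ for every $\sigma \in S_K$ and every $C \in \Pk$.

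First, I would verify that the action of $S_K$ leaves the joint dynamics $(\Phi_{C,t})_{C \in \Pk}$ invariant in distribution. Under the symmetry assumption of Section~\ref{ssec:symmetric}, the arrival rates and mean file sizes satisfy $p_{\sigma \cdot C} = p_C$ and $L_{\sigma \cdot C} = L_C$, since these depend only on $|C|$. For the rate-of-transmission function, the key observation is that $|\sigma \cdot C_x \cap \sigma \cdot C_z| = |C_x \cap C_z|$ and $|\sigma \cdot C_x| = |C_x|$, so that the interference expression~\eqref{eq:interference} and the rate expression~\eqref{eq:rof_lin} are unchanged when every class $C$ is replaced by $\sigma \cdot C$. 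Hence, if $(\Phi_{C,t})_{C \in \Pk}$ is a realization of the dynamics, then $(\Phi_{\sigma^{-1} \cdot C, t})_{C \in \Pk}$ is another realization with the same infinitesimal generator.

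Next, by Theorem~\ref{thm:phi_irreducible}, the process $\Phi_t$ is $\phi$-irreducible, and in the stable regime (assuming $\lambda < \lambda_c$, which is the only regime in which $\mu_C$ is well defined) the stationary distribution of $\Phi_t$ is unique. Therefore, the law of the stationary measure-valued point process $(\Phi_{0,C})_{C \in \Pk}$ is invariant under the permutation action: $(\Phi_{0,C})_{C \in \Pk} \overset{d}{\sim} (\Phi_{0, \sigma \cdot C})_{C \in \Pk}$ for all $\sigma \in S_K$. Taking expectations of $\Phi_{0,C}(\D)$ and dividing by $|\D|$ yields $\mu_C = \mu_{\sigma \cdot C}$.

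The only real subtlety I expect is the uniqueness of the stationary regime needed to transfer the distributional symmetry to the mean measures; this follows from combining Theorem~\ref{thm:phi_irreducible} with the standard recurrence theory of $\phi$-irreducible Markov jump processes (as invoked in the proof of Theorem~\ref{thm:cutoff}). Everything else is a bookkeeping check that the class-dependent parameters ($p_C$, $L_C$) and the rate function ($\lvert C_x \cap C_z \rvert$, $\lvert C_x \rvert$) depend on $C$ only through quantities that are invariant under permutation of the underlying band labels.
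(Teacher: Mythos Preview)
Your argument is correct and is precisely the rigorous version of what the paper has in mind: the paper does not give a detailed proof but simply states that the result ``is a logical consequence of the symmetry assumption, applied to the stationary regime.'' Your use of the $S_K$-action on classes, the invariance of $(p_C,L_C)$ and of the interference/rate functions under relabeling of bands, together with uniqueness of the stationary law from $\phi$-irreducibility, is exactly how one makes that sentence precise.
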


The result from Theorem~\ref{thm:intensity_symm} is a logical consequence of the symmetry assumption, applied to the stationary regime.

A first heuristic to estimate $\mu_C$ is the Poisson heuristic: we assume that, in the stationary regime, all the point processes $\Phi_{C,0}$ for $C \in \Pk$ are independent Poisson point processes with intensity $\mu_C^f$. Poisson approximations are tied to replica mean-field methods (see~\cite{bacce21} for instance), where we consider multiple realizations of our stochastic process and interactions between users are picked uniformly at random among replicas of the dynamics. We know that (see~\cite{bacce21}) as the number of replicas go to infinity, the processes behave like independent Poisson point processes, which allow to make this approximation. We define the following heuristic:

\begin{heuristic}\label{heur:poisson}
	We define the \emph{Poisson heuristic} $\mu_C^f$ for $\mu_C$ as the smallest solution of the following equation:
	
	\begin{equation}\label{eq:poisson}
	\mu_C^f\int_0^\infty e^{-z\mathcal{N}_0} e^{-\sum_{U \in \Pk}\mu_U^f \mathcal{I}(z,\lvert C \cap U \rvert)}\diff z = \frac{\lambda p_C L_C}{\lvert C \rvert \ell(r)}.
	\end{equation}
	
	where $\mathcal{I}(z,k) = \int_\D(1-e^{-z k \ell(\lVert x \rVert)})\diff x$.
\end{heuristic}

We numerically observe that the values of the intensities $\mu_C^f$ obtained by solving~\eqref{eq:poisson} do not depend on the value of $\mathcal{N}_0 > 0$. This observation is consistent with the fluid system and the stability condition in the network being independent of the value of $\mathcal{N}_0$.

\begin{proof}

	Let us apply Miyazawa's Rate Conservation Principle (cf~\cite{miyaza94}) to the  process $\Phi_{0,C}$ of users of class $C \in \Pk$: during a time interval $dt$, in the stationary regime, a $\lambda p_C L_C \lvert D \rvert dt$ users arrive on average in the network, and in a given network configuration $\Phi_{0,C}$, $\E \left[\frac{1}{L_C}\sum_{x\in \Phi_{C,0}}R(x, \Phi_0) dt\right]$ users leave, which gives:
	
	\begin{equation*}
		\lambda p_C L_C \lvert \D \rvert = \E\left[\sum_{x\in \Phi_{C,0}}R(x, \Phi_0)\right].
	\end{equation*}
	
	We use the definition of the Palm probability measure to rewrite this as:
	
	\begin{equation}\label{eq:RCL}
	\lambda p_C L_C \lvert \D \rvert= \E^0_{\Phi_{C,0}}\left[R(0^C,\Phi_0)\right] \E\left[\Phi_0^C(\D)\right].
	\end{equation}
	
	Using this, we get that, in the stationary regime:
	
	\begin{align*}
	\lambda p_C L_C &= \mu_C \E^0_{\Phi_{C,0}}\left[R(0^C,\Phi_0)\right]\\
	&= \lvert C \rvert \mu_C \ell(r)\mathbb{E}_{\Phi_{C,0}}^0\left[\frac{1}{N_0 + I_C} \right].
	\end{align*}
	
	This is equivalent to:
	
	\begin{equation*}
	\mu_C = \frac{\lambda p_C L_C}{\lvert C \rvert \ell(r)}\left(\mathbb{E}_{\Phi_{C,0}}^0\left[\frac{1}{N_0 + I_C} \right]\right)^{-1}.
	\label{eq:intensity}
	\end{equation*}

	We use the result of Lemma 1 from~\cite{sanka16}:
	
	\begin{lem}\label{lem:expectation}
		Let $Y$ be a positive random variable with finite expectation and $c>0$ be a real number. Then:	
		\begin{equation*}
		\mathbb{E}\left[\frac{1}{c+Y} \right] = \int_0^\infty e^{-zc} \mathbb{E}[e^{-zY}] \diff z.
		\end{equation*}
	\end{lem}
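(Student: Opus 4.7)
The plan is to prove the identity by representing $\tfrac{1}{c+Y}$ as a Laplace-type integral and then swapping the order of integration and expectation via Tonelli's theorem. The key elementary fact is that for any $a > 0$ one has
\begin{equation*}
\frac{1}{a} = \int_0^\infty e^{-za}\,\diff z,
\end{equation*}
which follows from a direct computation of the integral. Applying this with $a = c + Y$, which is strictly positive almost surely since $c > 0$ and $Y \geq 0$, yields the pointwise representation $\tfrac{1}{c+Y} = \int_0^\infty e^{-z(c+Y)}\,\diff z$.

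Taking expectations on both sides gives
\begin{equation*}
\E\!\left[\frac{1}{c+Y}\right] = \E\!\left[\int_0^\infty e^{-zc}e^{-zY}\,\diff z\right].
\end{equation*}
Since the integrand $(z,\omega) \mapsto e^{-zc}e^{-zY(\omega)}$ is non-negative and jointly measurable, Tonelli's theorem permits exchanging $\E$ and $\int_0^\infty$, giving
\begin{equation*}
\E\!\left[\frac{1}{c+Y}\right] = \int_0^\infty e^{-zc}\E\!\left[e^{-zY}\right]\diff z,
\end{equation*}
which is the claimed identity.

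There is essentially no obstacle: the finiteness of $\E[Y]$ is not used in the derivation itself but only guarantees that the left-hand side is finite and well-defined (and in particular bounded by $1/c$), so the right-hand side is automatically finite as well. The whole argument is a one-line application of the Laplace representation of $1/a$ combined with Tonelli, and no further structure on the distribution of $Y$ is required beyond non-negativity.
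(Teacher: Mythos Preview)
Your proof is correct and is the standard argument: represent $1/(c+Y)$ via the Laplace integral $\int_0^\infty e^{-z(c+Y)}\,\diff z$ and interchange expectation and integral by Tonelli. The paper does not actually prove this lemma; it simply quotes it as Lemma~1 from~\cite{sanka16}, so there is no proof in the paper to compare against. Your derivation is exactly the natural one and would serve as the missing proof.
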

	
	Using Lemma~\ref{lem:expectation} with $Y\equiv I(0^C, \Phi_0)$, which has a finite first moment and with $c \equiv \mathcal{N}_0$, we get:
	
	\begin{multline*}
		\mathbb{E}_{\Phi_{C,0}}^0\left[\frac{1}{\mathcal{N}_0 + I_C} \right] = \int_0^\infty e^{-z\mathcal{N}_0} \mathbb{E}_{\Phi_{C,0}}^0\left[e^{-zI_C}\right] \diff z.
	\end{multline*}
	
	Let us assume that the processes $\Phi_{0,U}$ are independent Poisson point processes denoted by $\Psi^U$. Using this, combined with Slivnyak's theorem, we get:
	
	% \begin{multline*}
	% \mathbb{E}_{\Phi_{C,0}}^0\left[\frac{1}{N_0 + I_C} \right] \simeq \hfill \\
	% \int_0^\infty e^{-z\mathcal{N}_0} \prod_{U \in \Pk} \mathbb{E}\left[e^{-z\sum_{x\in \Phi_0^{U}\backslash\{0^C\}} \lvert C \cap U \rvert \ell(\lVert x \rVert)}\right] \diff z.
	% \end{multline*}

	\begin{multline*}
		\mathbb{E}_{\Phi_{C,0}}^0\left[\frac{1}{N_0 + I_C} \right] \simeq \int_0^\infty e^{-z\mathcal{N}_0} \prod_{U \in \Pk} \mathbb{E}\left[e^{-z\sum_{x\in \Phi_0^{U}\backslash\{0^C\}} \lvert C \cap U \rvert \ell(\lVert x \rVert)}\right] \diff z.
		\end{multline*}
	
	Using the formula for the Laplace transform of a Poisson point process, we get:
	
	% \begin{multline*}
	% 		\mathbb{E}\left[e^{-z \sum_{x\in \Phi_0^{U}\backslash\{0^C\}}\lvert C \cap U \rvert \ell(\lVert x \rVert)}\right] = \hfill \\
	% 		\exp \left[ - \mu_U^f \int_\D (1-e^{-z\lvert C \cap U \rvert \ell(\lVert x \rVert)})\diff x \right].
	% \end{multline*}

	\begin{multline*}
		\mathbb{E}\left[e^{-z \sum_{x\in \Phi_0^{U}\backslash\{0^C\}}\lvert C \cap U \rvert \ell(\lVert x \rVert)}\right] =	\exp \left[ - \mu_U^f \int_\D (1-e^{-z\lvert C \cap U \rvert \ell(\lVert x \rVert)})\diff x \right].
	\end{multline*}

	To conclude the proof, we combine these equations:
	
	\begin{equation*}
	\mu_C^f\int_0^\infty e^{-z\mathcal{N}_0} e^{- \sum_{U \in \Pk} \mu_U^f \mathcal{I}(z,\lvert C \cap U \rvert)}\diff z =  \frac{\lambda p_C L_C}{\lvert C \rvert \ell(r)},
	\end{equation*}
	
	\noindent which is the announced result.
\end{proof}

We can use the result from Theorem~\ref{thm:intensity_symm} to obtain another version of the system of equations in~\eqref{eq:poisson}: if we define $\mu_j = \frac{1}{\lvert \D \rvert} \E\left[\sum_{U : \lvert U \rvert = j}\Phi_{U,0}(\D)\right] = \sum_{U: \lvert U \rvert = j} \mu_U$ as the intensity of users communicating on $j$ channels, for $1 \leq j \leq K$, with $L_j = L_C$ and $p_j = \binom{K}{j}p_C$ for all $C$ such that $\lvert C \rvert = j$, we obtain the following system of equations from Heuristic~\ref{heur:estimate}:

\begin{align}\label{eq:poisson_sym}
	&\mu_j^f\int_0^\infty e^{-z\mathcal{N}_0 -\sum_{l = 1}^K \mu_l^f \sum_{m = 1}^{ j\wedge l} \alpha_{m,j,l} \mathcal{I}(z, m)}\diff z =	\frac{\lambda p_j L_j}{j \ell(r)},
\end{align}

\noindent where $\alpha_{m, j, l} = \frac{\binom{j}{m}\binom{j-l}{l-m}}{\binom{K}{j}}$. This result if a consequence of Lemma~\ref{lem:symmetric} applied to the vector of user densities.

This result can be interesting computationally, in the case where we want to quickly estimate the density of users transmitting on a given class in the symmetric system: in contrast to the system proposed in Heuristic~\ref{heur:poisson}, which needs to solve $2^K-1$ equations, this symmetric system possesses $K$ equations. As $K$ grows larger, the gain in performance becomes non-negligible.

\subsection{A heuristic for $\lambda_c$}

An interesting result arising from Heuristic~\ref{heur:poisson} is an estimate for the value of $\lambda_c$: a necessary condition for the fixed point equation~\eqref{eq:poisson} to admit admit solutions is that the system is in it stationary regime, i.e., if $\lambda < \lambda_c$. We define the following estimate for $\lambda_C$:

\begin{heuristic}\label{heur:estimate}
	Let $\lambda_\textrm{P}$ be the largest value of $\lambda$ such that~\eqref{eq:poisson} admits a solution (or~\eqref{eq:poisson_sym} in the symmetric network). Then, $\lambda_\textrm{P}$ is an estimate for $\lambda_c$, which we call the \emph{Poisson heuristic} estimate.
\end{heuristic}

This estimate is interesting because it comes from a completely different approach and is only relying on the study of the stationary regime of Poisson approximates of the network. We will discuss in Section~\ref{sec:numerical} the performance of this heuristic.

The reason why this heuristic captures the stability region of our dynamics is not yet understood. We do not know if the value of $\lambda_\textrm{P}$ is equal to the value of $\lambda_c$ presented in Theorem~\ref{thm:stability} or if this value is a precise numerical approximation of the critical arrival rate.

\subsection{Second order heuristic}\label{ssec:cavity}

In an effort to refine the first order heuristic for user densities from Heuristic~\ref{heur:poisson}, we introduce a \emph{second order} heuristic, which uses an approximation for the pair-wise correlation function in the stationary regime.

\begin{heuristic}\label{heur:second_order}
	The \emph{second-order heuristic} for the intensity of $\Phi_{0,C}$ is $\mu^s_C$ defined as:
	\begin{equation*}
		\mu^s_C = \frac{\lambda p_C L_C}{\lvert C \rvert \ell(R)}\left(\mathcal{N}_0 + I_C\right),
	\end{equation*}

	\noindent where $I_C \triangleq \E^0_{\Phi_{C,0}}\left[I(0^C, \Phi_0)\right]$ is the interference experienced by the typical user of class $C$ in the stationary regime. The vector $\mathbf{I} = (I_C)_{C\in\Pk}$ is the solution to the following equation:

	\begin{equation*}
		I_C = \sum_{U \in \Pk} \frac{\lvert C \cap  U \rvert}{\mu^s_C}\int_{x \in \D} \ell(\lVert x \rVert) \rho^{(2)}_{C,U}(x, 0) \diff x,
	\end{equation*}

	\noindent with the second order moment measure $\rho^{(2)}_{C,U}$ is defined as a function of $\mathbf{I}$: 
	
	\begin{align}\label{eq:inter_fp}
		&\rho^{(2)}_{C,D}(x,y) = \frac{\lambda(\mu^s_D p_C + \mu^s_C p_D)}{d_{C,D}(x, y)},
	\end{align}

	\noindent and: 

	\begin{align*}
		d_{C,D}(x, y) = &\frac{1}{L_C}\frac{\lvert C\rvert \ell(R)}{\mathcal{N}_0 + \lvert C \cap D \rvert \ell\left(\lVert x - y \rVert\right) + I_C} \\
		&+ \frac{1}{L_D}\frac{\lvert D\rvert \ell(R)}{\mathcal{N}_0 + \lvert C \cap D \rvert \ell\left(\lVert x - y \rVert\right) + I_D}.
	\end{align*}

\end{heuristic}

\begin{proof}
	To obtain this heuristic, we make the following cavity approximation in the system: let us consider a pair of points $(x,y)$ in $\D$, where $x$ is of class $C$ and $y$ is of class $D$.

	The arrival rate of the pair is $\lambda p_C \mu_D + \lambda p_D \mu_C$, taking into account the contributions of the two processes. To obtain the departure rate, let us fix the locations of the two points. The interference experienced by a point located at $x$ of class $C$ in the stationary regime conditioned on a user of class $D$ being present at $y$ is equal to:

	\begin{equation}\label{eq:stat_inter}
		I(x) = \lvert C \cap D \rvert \ell(\lVert x - y \rVert) + I_C,
	\end{equation}

	\noindent where $I_C$ is the interference experienced by the typical user of class $C$ in the network, i.e. $I_C = \E^0_{\Phi_0}\left[I(0^C, \Phi_0)\right]$. Using the definition of the second order moment measure relative to processes $\Phi_{C,0}$ and $\Phi_{U,0}$, $\rho^{(2)}_{C,U}$, we have:

	\begin{align*}
		I_C &= \E^0_{\Phi_0}\left[I(0^C, \Phi_0)\right] \\
		&= \sum_U \E^0_{\Phi_0}\left[I(0^C, \Phi_0^U)\right] \\
		&= \frac{1}{\mu_C} \sum_U \lvert C \cap U \rvert \int_{x \in \D} \ell(\lVert x \rVert) \rho^{(2)}_{C,U}(x,0) \diff x.
	\end{align*}
	
	We obtain the departure rate for the pair $(x,y)$ with $x$ being of type $C$ and $y$ being of type $D$:

	\begin{align*}
		d_{C,D}(x, y) = &\frac{1}{L_C}\frac{\lvert C\rvert \ell(R)}{\mathcal{N}_0 + \lvert C \cap D \rvert \ell\left(\lVert x - y \rVert\right) + I_C} \\
		&+ \frac{1}{L_D}\frac{\lvert D\rvert \ell(R)}{\mathcal{N}_0 + \lvert C \cap D \rvert \ell\left(\lVert x - y \rVert\right) + I_D}.
	\end{align*}

	In the stationary regime, on average, the umber of arrivals compensate the departures, which gives us:

	\begin{equation*}
		\lambda(\mu_D p_C + \mu_C p_D) = \rho^{(2)}_{C,D}(x,y) d_{C,D}(x, y).
	\end{equation*}

	Finally, to obtain the second-order heuristic for $\mu_C$, we use Miyazawa's Rate Conservation principle for the process $\Phi_{C,0}(\D)$, assuming that all the quantities are distributed according to their stationary distribution:

	\begin{equation*}
		\lambda p_C \lvert \D \rvert = \E\left[\Phi_{C,0}(\D)\right] \frac{\lvert C \rvert \ell(r)}{\mathcal{N}_0 + I_C},
	\end{equation*}

	\noindent which gives the intended result.

\end{proof}

\section{Numerical simulations and performance of the heuristics}\label{sec:numerical}

\subsection{Stability region}

In this section, we discuss the performance of the two heuristics developed in Heuristics~\ref{heur:poisson} and~\ref{heur:second_order}. In~\ref{fig:heuristic}, a symmetric setup with 2 channels and three classes denoted as $\{1\}$, $\{2\}$ and $\{1,2\}$ is considered. The average file sizes  are $L_{\{1\}} = L_{\{2\}} = 1$ and $L_{\{1,2\}} = 2$ and the probability distribution is $p_{\{1\}} = p_{\{2\}} = 0.4$ and and $p_{\{1,2\}} = 0.2$. The path-loss function is $\ell(x) = (1+x)^{-4}$, and the domain $\D$ is the square torus centered at the origin with side length 10.

To simulate the dynamics of the system, we simulate the embedded Markov chain in the process $\Phi_t$: at each iteration, we compute the time to the next birth by drawing a time $t_b$ from an exponential distribution with mean $\lambda \lvert \D \rvert$ and a random vector $\mathbf{t}_D$ where each coordinate corresponds to a user located at $x_i$ of class $C_i$ and is drawn from an exponential distribution with mean $\frac{1}{L_{C_i}} R(x_i, \Phi_t)$. 

We then compare the values of $t_b$ and the minimum of $\mathbf{t}_D$: if the former is lower, a user arrives at a location $x$ taken uniformly at random in $\D$; if the latter is lower, the corresponding user leaves the system. We then update the value of the interference in the system and move forward in time to the next event. If $(T_0 = 0, T_1, \dots, T_n)$ are the event times in the network and $X$ is the embedded Markov chain, we obtain that $\Phi_{T_n} = X_n$, which gives us the representation of process $\Phi_t$. Figure~\ref{fig:pop} shows the behavior of the system with $K=2$ for two values of $\lambda$: one that shows stability, and the other, instability.

\begin{figure}[h]
	\centering
	\begin{minipage}[b]{0.45\textwidth}
		\centering
		\includegraphics[width=\linewidth]{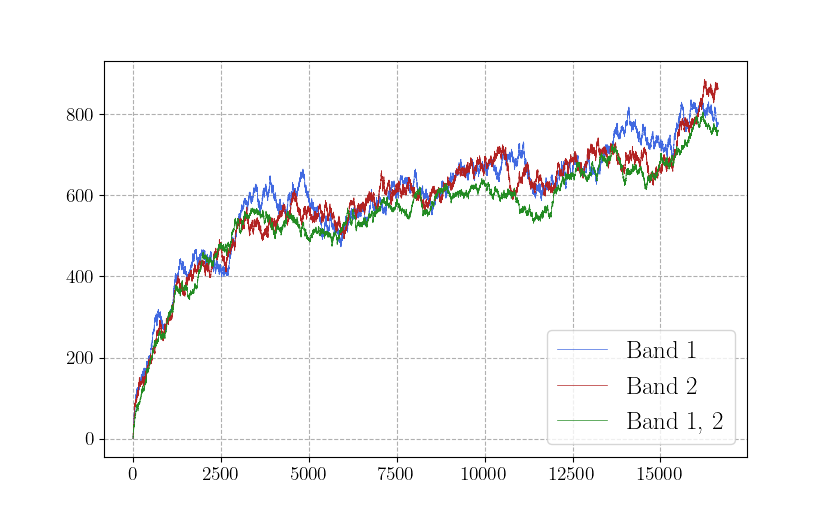}
	\end{minipage}
	\hfill
	\begin{minipage}[b]{.45\textwidth}
		\centering
		\includegraphics[width=\linewidth]{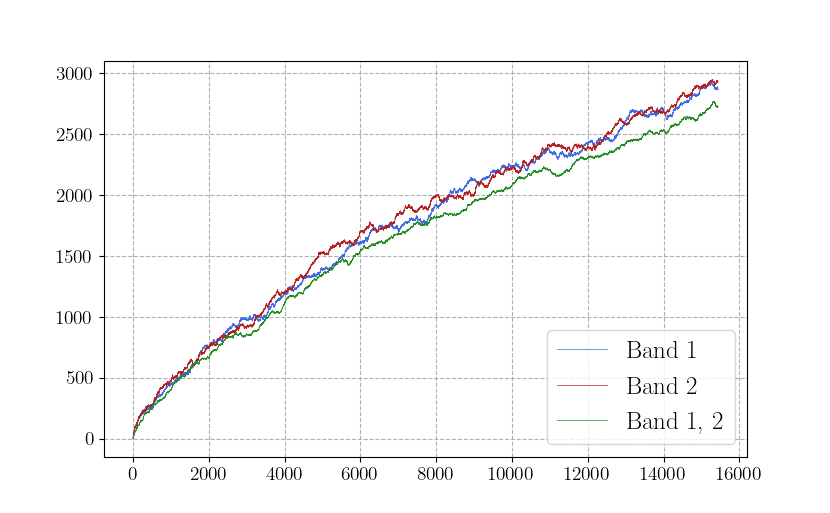}
	\end{minipage}
	\caption{Number of active users in the network over time with $K = 2$ bands for two values of $\lambda$. On the left, $\lambda = 0.95 \lambda_c$ and we can see that the number of users in the network stays bounded. On the right, $\lambda = 1.05 \lambda_c$. In this case, population grows linearly over time in the network, showing the instability of the system.}
	\label{fig:pop}
\end{figure}

A reliable criterion to study the stability of the system is Little's law (see~\cite{little61}), that links the average staying time $W$ of a user in the system, the arrival rate $\lambda$ and the number of users $L$ in the system in the stationary regime:

\begin{equation}\label{eq:little}
	L = \lambda W.
\end{equation}

Using the parameters for our system, \eqref{eq:little} becomes:

\begin{equation}\label{eq:little_2}
	\E \left[\Phi_{t,C} (\D)\right] = \lambda p_C W_C,
\end{equation}

\noindent where $W_C$ is the staying time of a user of class $C$ and $\E \left[\Phi_{t,C} (\D)\right]$ is the average number of users of class $C$ in the stationary regime. When simulating the dynamics of the network, we can compute the average time spent by users in the network in each class. If the value converges to a fixed, finite value, then the network is stable and the number of users follow~\eqref{eq:little_2}. If the time lived in the network grows linearly and diverges to infinity, then the network is not stable.

Figure~\ref{fig:delay} shows the average staying time of users in the network for two different values of $\lambda$. The phase transition in the network happens as expected: for $\lambda = 0.9\lambda_c$, the system appears to be stable and for $\lambda = 1.1 \lambda_c$, the average staying time grows linearly, meaning that no stationary regime exists for in the network, and that our dynamics are unstable.

\begin{figure}[h]
	\centering
	\begin{minipage}[b]{0.45\textwidth}
		\centering
		\includegraphics[width=\linewidth]{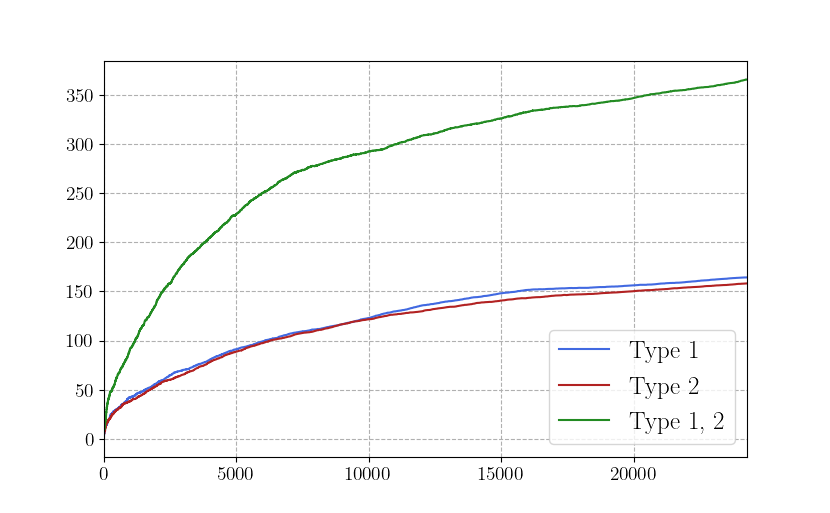}
	\end{minipage}
	\hfill
	\begin{minipage}[b]{.45\textwidth}
		\centering
		\includegraphics[width=\linewidth]{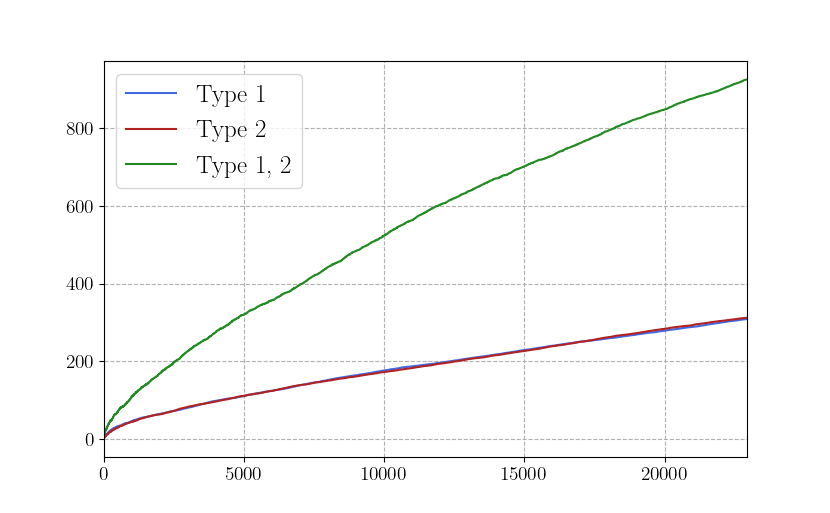}
	\end{minipage}
	\caption{Average staying time of users in the network with $K=2$ channels for two different values of $\lambda$. In blue and red, users transmitting only on one channel, in green, users transmitting on the two channels. On the left, $\lambda = 0.9\lambda_c$, on the right $\lambda = 1.1\lambda_c$. The left network is stable, and the average staying time converges to a finite limit. The right network is unstable and the staying time diverges.}
	\label{fig:delay}
\end{figure}

\subsection{Poisson heuristics}

We can compare the heuristics from Heuristic~\ref{heur:poisson} and~\ref{heur:estimate} to the value we obtain through simulation. To compute this intensity, we use an ergodic approximation of the form:

\begin{equation}\label{eq:ergodic_mean}
	\mu_C \approx \frac{1.28}{t} \int_{u}^{u+t} \frac{\Phi_{w,C}(\D)}{\lvert \D \rvert} \diff w,
\end{equation}

\noindent where $u$ is taken sufficiently large to ensure we are in the stationary regime of the system and $t$ is large enough so that the approximation is correct. The 1.28 factor arising in~\eqref{eq:ergodic_mean} comes from a Palm bias: in the simulation, when we compute the average number of users in $\D$, we introduce an observation bias when estimating the stationary user density, which we correct by using this multiplicative factor.

Figure~\ref{fig:heuristic} displays the results of both the Poisson (in red, dashed) against the numerical estimation of the user density (in blue, plain) for the process $\Phi_{\{1\}}$ using~\eqref{eq:ergodic_mean}.

\begin{figure}[h]
	\centering
	\includegraphics[width = \linewidth]{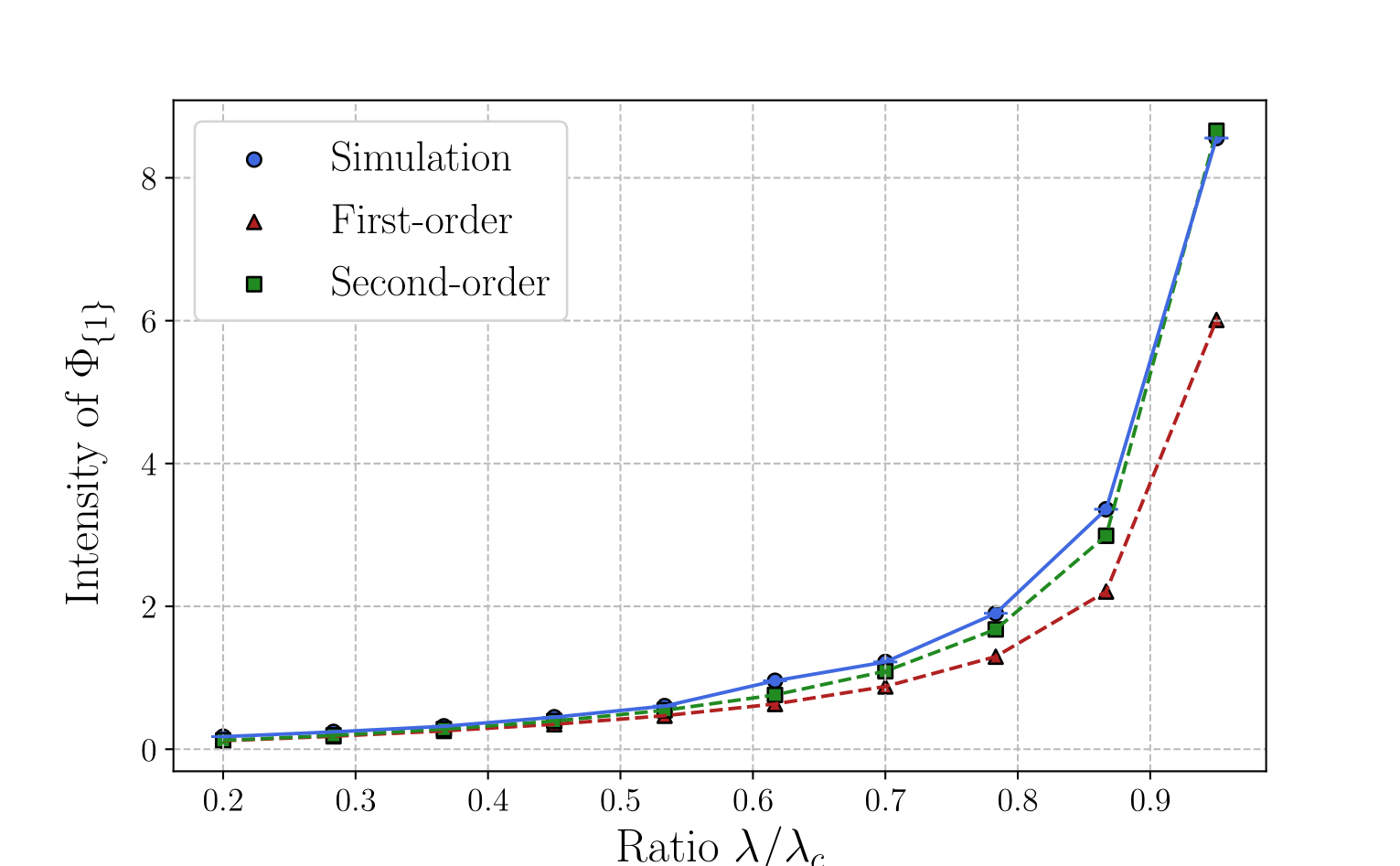}
	\caption{Value of $\mu_{\{1\}}$ as a function of $\lambda/\lambda_c$ in a symmetric configuration.}
	\label{fig:heuristic}
\end{figure}

As we expected from previous results (see Section IV from~\cite{sanka16}), the Poisson heuristic for spatial wireless networks does not estimate precisely the stationary user densities. This result can be explained by the existence of \emph{clustering}, or attraction, in the stationary system. Clustering implies that the spatial positions of users in the stationary regime are not independent, and that connecting users tend to attract other users by slowing communications of users that stay too close to them. A sensible conjecture to make is that our spatial multiclass wireless network displays the same behavior in the stationary regime, which would provide an explanation for this observation.

The Poisson estimate gives us an interesting result: Figure~\ref{fig:l_star} shows the respective values of $\lambda_c$ and $\lambda_\textrm{P}$ in the symmetric case for different values of $p_{\{1,2\}}$, with $p_{\{1\}} = p_{\{2\}} = \frac{1-p_{\{1,2\}}}{2}$. The two values are very close, which means that the Poisson hypothesis we make in the stationary regime is a reasonable approximation to obtain a condition for the stability of the system. In other words, Heuristic~\ref{heur:estimate} provides an accurate numerical heuristic for the result of Theorem~\ref{thm:stability}.

\begin{figure}[h]
	\centering
	\includegraphics[width = \linewidth]{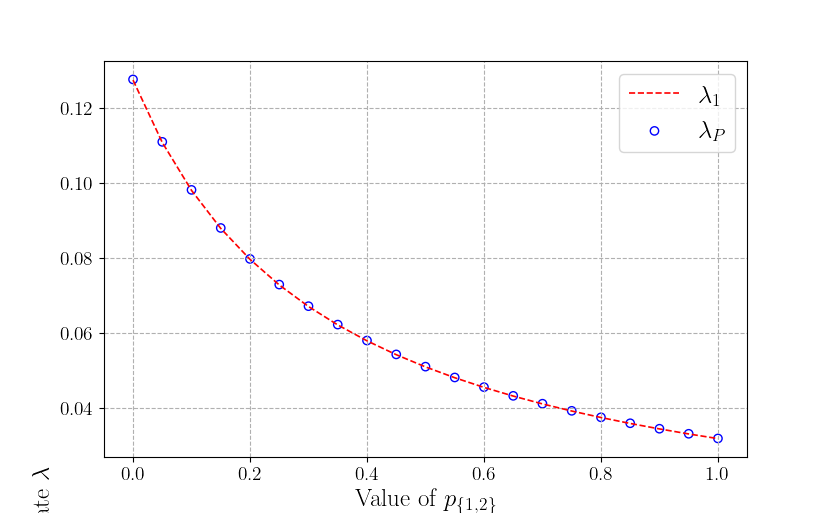}
	\caption{Comparison between $\lambda_c$ and its Poisson heuristic as a function of $p_{\{1,2\}}$ in the network in the symmetric case.}
	\label{fig:l_star}
\end{figure}

\section{Discussion and extensions}

This paper proposed a first step in the analysis of a multiclass interacting point process dynamics
representing important new features of wireless networks, and which extends the single-class model of~\cite{sanka16}
by adding service differentiation. Service differentiation changes quite significantly the interactions between
users, and new ideas were needed to assess the monotonicity, the $\phi$-irreducibility,
the stochastic stability, and the steady state properties for the associated dynamics.

Here are, to conclude, a few interpretation comments and natural extensions of this first step.

\subsection{Interpretation of the results}

Theorem~\ref{thm:stability} gives the critical arrival rate below which the dynamics is stable and above which it is not. 
The form of the critical value in question is somewhat unexpected. The quantity
$\mathfrak{L} = \sum_U \lvert U \rvert p_U L_U$ which shows up in this critical value can be given
the following interpretation: on average, a user has $L_U$ bits to transmit over $\lvert U \rvert$ bands.
Hence, a user of type $U$ brings a time-space {\em load}  equal to $\lvert U \rvert L_U$ to the network.
Taking the average over the distribution of user types, we obtain $\mathfrak{L}$.
We can rewrite the stability condition $\lambda < \frac{K\ell(R)}{\langle \ell_\D \rangle \mathfrak{L}}$ as:

\begin{equation*}
    \lambda \frac{\mathfrak{L}}{K} < \frac{\ell(R)}{\langle \ell_\D \rangle},
\end{equation*}

We can interpret the left hand side term as the average {\em load} arriving per unit of time and per channel.
The right-hand side is the throughput capacity of one channel in heavy load
(the signal power divided by the fluid interference power).

\subsection{Possible extensions}

In this paper, we made the assumption that the system was in a low-SINR regime, 
which justified the linearization of the Shannon formula in~\eqref{eq:rof_lin}. 
This is not an essential limitation and it is not difficult to show that the stability results
we present here remain valid, up to a multiplicative factor, when using the non-linearized 
version of the Shannon formula.

The methodology can also be extended and applied to more complex network settings.

A first natural extension concerns the relaxation of the symmetry hypothesis we made.
Relaxing this assumption prevents the use of Lemma~\ref{lem:symmetric} and the stability condition
of Theorem~\ref{thm:stability} does not work anymore. For a non-symmetric multiclass network,
it is not clear whether we will get a closed-form for the critical arrival rate.

A second question comes from the Poisson heuristic we defined in Heuristic~\ref{heur:estimate}.
This heuristic gives a precise estimate for the value of $\lambda_c$ in the symmetric system.
So far, we cannot say whether this estimate is a precise numerical approximation or an 
exact result for $\lambda_c$. A first step to answer this question would be to study the
spatial correlation of points when reaching the limit of the stability region:
if this spatial correlation vanishes, the Poisson approximation becomes relevant and
this could explain why this heuristic performs well. A result supporting this 
interpretation comes from Section V of~\cite{sanka16}, where the authors showed that
clustering becomes less prominent when $\lambda$ becomes closer to $\lambda_c$. 
The question whether this extends to the multiclass setting is open.
It is also worth noting that in simulations not presented in this paper,
the Poisson heuristic close to $\lambda_c$ still performs well when removing
the symmetry hypothesis, and in different numerical setups as the one presented in Section~\ref{sec:numerical}.

Among the other interesting properties we can expect to extend from single to multiclass,
let us quote the existence and quantification of clustering (see Definition 1 of~\cite{sanka16}),
the definition of more precise stationary user density heuristics and the study of higher order moment measures.
We already know that, for single class interference queueing networks, the system has exponential
moments in the stationary regime (see~\cite{sanka19}). Extending the result to our multiclass
dynamics willl help quantifying the tail distribution of the service time, which is critical for 5G networks.

Lastly, another question would be the extension of our results to the whole Euclidean plane. This will however require 
completely different proof techniques as there is no theory for assessing the stability of infinite dimensional
systems using fluid limits.

\section*{Acknowledgements}
This work was supported by the ERC NEMO grant, under the European Union's Horizon 2020 research
and innovation programme, grant agreement number 788851 to INRIA.
The authors thank S. Foss and S. Shneer of Heriot Watt University for their comments and suggestions on this work.

\bibliographystyle{ieeetr}
\bibliography{BWP}

%\newpage

% \appendices
\section{Appendix}

\section*{A: Proof of Theorem~\ref{thm:FSP}}\label{sec:AppendixA}

Let us take a sequence of initial conditions $\bar{\mathbf{X}}_{\varepsilon}^n(0) = (\bar{x}^n_{i,C})$ for our system, with $\lim_{n \rightarrow \infty} \bar{x}^n_{i,C} = \infty$. The goal in this section is to study the limit of the sequence of fluid-scaled processes $\left(\frac{1}{n}\bar{\mathbf{X}}_\varepsilon(nt)\right)$ and to obtain a system of ODEs for which the limit of this sequence is a solution.

Using~\eqref{eq:dynamic}, we get, for all $i, C$:

% \begin{multline}\label{eq:scaled}
% 	\frac{1}{n}\bar{X}_{i,C}(nt) = \frac{1}{n}\bar{x}^n_{i,C} + \frac{1}{n}\mathcal{A}_{i,C}(\lambda p_C \varepsilon^2 n t) \hfill \\
% 	- \frac{1}{n}N_{i,C}\left( \frac{1}{L_C}\int_0^{nt} \bar{X}_{i,C}(u)R_{i,C}(\bar{\mathbf{X}}_\varepsilon(u)) \diff u\right).
% \end{multline}

\begin{align}\label{eq:scaled}
	&\frac{1}{n}\bar{X}_{i,C}(nt) = \frac{\bar{x}^n_{i,C}}{n} + \frac{1}{n}\mathcal{A}_{i,C}(\lambda p_C \varepsilon^2 n t)
	- \frac{1}{n}N_{i,C}\left( \frac{1}{L_C}\int_0^{nt} \bar{X}_{i,C}(u)R_{i,C}(\bar{\mathbf{X}}_\varepsilon(u)) \diff u\right).
\end{align}

We use the variable change $u = ns$ in the integral to get:

% \small
% \begin{align*}
% 	&\int_0^{nt} \bar{X}_{i,C}(u)R_{i,C}(\bar{\mathbf{X}}_\varepsilon(u)) \diff u = \\
% 	&\int_0^{t} \frac{n\lvert C \rvert \frac{1}{n}\bar{X}_{i,C}(ns)\diff s}{\frac{\mathcal{N}_0}{n} + \sum_{k,U}\ell^\varepsilon(a_k, a_i) \lvert C \cap U \rvert \left(\frac{1}{n}\bar{X}_{k,U}(ns) - \frac{1}{n}\mathbf{1}_{\{U = C, i = k\}}\right)}.
% \end{align*}
% \normalsize

\small
\begin{align*}
	&\int_0^{nt} \bar{X}_{i,C}(u)R_{i,C}(\bar{\mathbf{X}}_\varepsilon(u)) \diff u = \int_0^{t} \frac{n\lvert C \rvert \frac{1}{n}\bar{X}_{i,C}(ns)\diff s}{\frac{\mathcal{N}_0}{n} + \sum_{k,U}\ell^\varepsilon(a_k, a_i) \lvert C \cap U \rvert \left(\frac{1}{n}\bar{X}_{k,U}(ns) - \frac{1}{n}\mathbf{1}_{\{U = C, i = k\}}\right)}.
\end{align*}
\normalsize

We define:

% \begin{multline*}
% 	R_{i,C}^n(x) = \hfill \\
% 	\frac{\lvert C \rvert}{\frac{\mathcal{N}_0}{n} + \sum_{k,U}\ell^\varepsilon(a_k, a_i) \lvert C \cap U \rvert \left(x_{k,U} - \frac{1}{n}\mathbf{1}_{\{U = C, i = k\}}\right)}.
% \end{multline*}

\begin{multline*}
	R_{i,C}^n(x) = \frac{\lvert C \rvert}{\frac{\mathcal{N}_0}{n} + \sum_{k,U}\ell^\varepsilon(a_k, a_i) \lvert C \cap U \rvert \left(x_{k,U} - \frac{1}{n}\mathbf{1}_{\{U = C, i = k\}}\right)}.
\end{multline*}

Using this, \eqref{eq:scaled} becomes:

\begin{multline*}
	\frac{1}{n}\bar{X}_{i,C}(nt) = \frac{1}{n}\bar{x}^n_{i,C} + \frac{1}{n}\mathcal{A}_{i,C}(\lambda p_C \varepsilon^2 nt) \hfill \\
	- \frac{1}{n} N_{i,C}\left( \frac{n}{L_C}\int_0^t \frac{1}{n}\bar{X}_{i,C}(ns) R_{i,C}^n\left(\frac{1}{n} \bar{\mathbf{X}}_{\varepsilon}(ns)\right)\diff s\right),
\end{multline*}

\noindent Let $M^n_{i,C}(z) = \frac{1}{n}N_{i,C}(nz) - z$ for $z \in \R$, and let us define:

% \begin{multline}\label{eq:fluid_dynamic}
% 	\bar{Y}^n_{i,C}(t) = \frac{1}{n}\bar{x}^n_{i,C} + \frac{1}{n}\mathcal{A}_{i,C}(\lambda p_C \varepsilon^2 nt) \hfill \\
% 	- M^n_{i,C}\left( \frac{1}{L_C}\int_0^t \frac{1}{n}\bar{X}_{i,C}(ns)R_{i,C}^n\left(\frac{1}{n}\bar{\mathbf{X}}_{\varepsilon}(ns)\right) \diff s\right).
% \end{multline}

\begin{multline*}
	\bar{Y}^n_{i,C}(t) = \frac{1}{n}\bar{x}^n_{i,C} + \frac{1}{n}\mathcal{A}_{i,C}(\lambda p_C \varepsilon^2 nt) - M^n_{i,C}\left( \frac{1}{L_C}\int_0^t \frac{1}{n}\bar{X}_{i,C}(ns)R_{i,C}^n\left(\frac{1}{n}\bar{\mathbf{X}}_{\varepsilon}(ns)\right) \diff s\right).
\end{multline*}

From~\eqref{eq:scaled}, we have:

% \begin{multline*}
% 	\frac{1}{n}\bar{X}_{i,C}(nt) = \hfill \\
% 	\bar{Y}^n_{i,C}(t) - \frac{1}{L_C}\int_{0}^t \frac{1}{n}\bar{X}_{i,C}(ns)R_{i,C}^n\left(\frac{1}{n}\bar{\mathbf{X}}_\varepsilon(ns)\right)\diff s.
% \end{multline*}
\begin{multline*}
	\frac{1}{n}\bar{X}_{i,C}(nt) = \bar{Y}^n_{i,C}(t) - \frac{1}{L_C}\int_{0}^t \frac{1}{n}\bar{X}_{i,C}(ns)R_{i,C}^n\left(\frac{1}{n}\bar{\mathbf{X}}_\varepsilon(ns)\right)\diff s.
\end{multline*}

Let us denote $\bar{\mathbf{x}}_0 = \lim_{n \rightarrow \infty} \frac{1}{n}\bar{x}^n_{i,C}$ when it exists. We say that $\bar{\mathbf{x}}_0$ is finite if and only if all its coordinates are finite.

\begin{lem}\label{lem:convergence}
	If $\bar{\mathbf{x}}_0$ exists and is finite, then:
	\begin{equation}
		\forall t \geq 0, \forall i, C, \quad \lim_{n \rightarrow \infty}\bar{Y}^n_{i,C}(t) = \bar{X}_{i,C}(0) + \lambda p_C \varepsilon^2 t. \label{eq:cv_y}
	\end{equation}
\end{lem}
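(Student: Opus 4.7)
The plan is to prove convergence of each of the three terms in the definition of $\bar{Y}^n_{i,C}(t)$ separately. The first term, $\frac{1}{n}\bar{x}^n_{i,C}$, converges to $\bar{X}_{i,C}(0)$ (i.e.\ the $(i,C)$-coordinate of $\bar{\mathbf{x}}_0$) by assumption, since $\bar{\mathbf{x}}_0$ exists and is finite. The second term, $\frac{1}{n}\mathcal{A}_{i,C}(\lambda p_C \varepsilon^2 n t)$, converges almost surely (and uniformly on compact time intervals) to $\lambda p_C \varepsilon^2 t$ by the functional strong law of large numbers for unit-rate Poisson processes. Both of these are standard.

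The substantive step is showing that the third, compensating term,
\[
M^n_{i,C}\!\left( \frac{1}{L_C}\int_0^t \tfrac{1}{n}\bar{X}_{i,C}(ns)\,R_{i,C}^n\!\big(\tfrac{1}{n}\bar{\mathbf{X}}_{\varepsilon}(ns)\big)\,\diff s\right),
\]
converges to $0$. The functional SLLN for $N_{i,C}$ gives $\sup_{0 \le z \le T}|M^n_{i,C}(z)| \to 0$ a.s.\ for every finite $T$; so what must be shown is that the random argument of $M^n_{i,C}$ stays in a \emph{deterministic} compact interval $[0,T]$ uniformly in $n$ and in $s \in [0,t]$.

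This is the main technical obstacle, and it hinges on a pointwise bound for the quantity $x_{i,C} R^n_{i,C}(x)$ when $x = \frac{1}{n}\bar{\mathbf{X}}_\varepsilon(ns)$. Keeping only the self-cell self-class contribution $\ell^\varepsilon(a_i,a_i)\lvert C \rvert (x_{i,C}-\tfrac{1}{n})$ in the denominator of $R^n_{i,C}(x)$, and using $\ell^\varepsilon(a_i,a_i)=\ell(0)=1$, one obtains
\[
x_{i,C}\,R^n_{i,C}(x) \le \frac{\lvert C \rvert\, x_{i,C}}{\tfrac{\mathcal{N}_0}{n} + \lvert C \rvert\big(x_{i,C}-\tfrac{1}{n}\big)}.
\]
Since $\bar{X}_{i,C}(ns)\in\N$, the rescaled value $x_{i,C}$ is either $0$ (in which case the product is $0$) or at least $1/n$. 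Splitting cases $\bar{X}_{i,C}(ns)=1$ and $\bar{X}_{i,C}(ns)\ge 2$ gives a bound by $\max(\lvert C \rvert/\mathcal{N}_0,2)$, independent of $n$, $s$ and the state. The constant can be taken uniform over $C$ by replacing $\lvert C\rvert$ with $K$. Consequently the argument of $M^n_{i,C}$ lies in $[0,K^\star t/L_C]$ for an explicit constant $K^\star$, and
\[
\Big| M^n_{i,C}\!\left(\tfrac{1}{L_C}\!\int_0^t \!\tfrac{1}{n}\bar{X}_{i,C}(ns) R^n_{i,C}\!\big(\tfrac{1}{n}\bar{\mathbf{X}}_\varepsilon(ns)\big)\diff s\right)\!\Big| \le \sup_{0\le z \le K^\star t/L_C}|M^n_{i,C}(z)| \xrightarrow[n\to\infty]{} 0 \text{ a.s.}
\]

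Combining the three limits yields~\eqref{eq:cv_y}. The only delicate point in the whole argument is the a priori bound on $x_{i,C} R^n_{i,C}(x)$; the rest is an application of the functional SLLN applied termwise. Since everything on the right-hand side of the definition of $\bar{Y}^n_{i,C}$ converges deterministically and the argument of the stochastic term stays in a fixed compact set, the convergence is pathwise on an almost sure set, not only in distribution, which is what will be needed in the subsequent tightness/equicontinuity arguments for extracting fluid limits.
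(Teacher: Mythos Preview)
Your proof is correct and follows essentially the same approach as the paper's: use the SLLN for the arrival term, bound the integrand $x_{i,C}R^n_{i,C}(x)$ by keeping only the self-cell self-class contribution in the denominator, and then use the functional SLLN for $M^n_{i,C}$ on the resulting bounded interval. Your treatment is in fact slightly cleaner: you handle the $-1/n$ correction in the denominator explicitly (via the case split on $\bar X_{i,C}(ns)\in\{0,1,\ge 2\}$), whereas the paper's displayed bound $\le t/L_C$ silently drops it; and because you obtain a deterministic bound on the argument of $M^n_{i,C}$, you avoid the paper's detour through Prohorov's theorem for tightness of the integrand sequence.
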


\begin{proof}
	Using the strong law of large numbers yields, $\Proba$-almost surely:
	
	\begin{equation*}
		\lim_{n \rightarrow \infty}\frac{1}{n}\mathcal{A}_{i,C}(\lambda p_C \varepsilon^2 nt) = \lambda p_C \varepsilon^2 t.
	\end{equation*}
	
	Moreover, we have:
	
	\begin{align*}
		& \frac{1}{L_C}\int_{0}^t\frac{1}{n} \bar{X}_{i,C}(ns) R_{i,C}^n\left(\frac{1}{n} \bar{\mathbf{X}}_{\varepsilon}(ns)\right) \diff s\\
		& = \frac{1}{L_C}\int_{0}^t \frac{\lvert C \rvert \frac{1}{n} \bar{X}_{i,C}(ns)}{\frac{N_0}{n} + \sum_{k,U} \lvert C \cap U \rvert \ell^\varepsilon(a_k, a_i) \frac{1}{n}\bar{X}_{k,U}(ns)} \diff s \\
		& \leq \frac{1}{L_C} \int_{0}^t \frac{\lvert C \rvert \frac{1}{n} \bar{X}_{i,C}(ns)}{\lvert C \rvert \ell^\varepsilon(a_i, a_i) \frac{1}{n}\bar{X}_{i,C}(ns)} \diff s \\
		& = \frac{t}{L_C},
	\end{align*} 
	
	\noindent where we use the fact that for all $0 \leq i \leq N_\varepsilon - 1$, $\ell_\varepsilon(a_i, a_i) = 1$.

	We use Prohorov's Theorem (see Theorems 5.1 and 5.2 of~\cite{billi99}) to obtain that the sequence of processes $\left\{\frac{1}{n}\bar{X}_{i,C}(n\cdot) \right.$ $\left.R^n_{i,C}(\frac{1}{n}\bar{\mathbf{X}}_\varepsilon(n\cdot))\right\}$ is tight. Finally, because $M^n_{i,C}(z) \rightarrow 0$ as $n$ goes to infinity for all $z$ (using the strong law of large numbers), we can conclude that:
	
	\begin{equation*}
		\lim_{n \rightarrow \infty} M^n_{i,C}\left( \frac{1}{L_C}\int_{0}^t \frac{1}{n} \bar{X}_{i,C}(ns) R_{i,C}^n\left(\frac{1}{n} \bar{\mathbf{X}}_\varepsilon(ns)\right)\diff s\right) = 0,
	\end{equation*}
	
	\noindent which leads to the intended result.
\end{proof}

For $v : [0,T) \rightarrow \R$ and $0 \leq t_1 \leq t_2 \leq T$, we define:

\begin{equation*}	
	w(v, [t_1, t_2]) = \sup \left\{ \lvert v(u_1)-v(u_2) \rvert, u_1, u_2 \in [t_1, t_2] \right\}. 
\end{equation*}

Let us define the modulus of continuity $\omega(v, \delta, T) = \sup \{w(v,[s, t]), 0 \leq s,t \leq T, \lvert s-t \rvert < \delta\}$. We introduce the concept of \emph{C-tightness} and a useful characterization (see Definition VI.3.25 and Proposition VI.3.26 from~\cite{jacod87}):

\begin{definition}[C-tightness]
	A sequence $\{V^n, n\geq 1\}$ of functions is \emph{C-tight} if and only if, for all $t_0> 0$, $\eta >0$, $T > 0$, there exists $K^0_\eta$, $n^0_\eta$ and $\delta^0_\eta$ such that, for all $n \geq n^0_\eta$:
	
	\begin{itemize}
		\item[i)]  $\Proba\left[ \sup_{0\leq t \leq T}\lvert V^n(t)\rvert \geq K^0_\eta \right] < \eta$.
		\item[ii)] $\Proba\left[\omega(V^n, \delta^0_\eta, t_0) > \eta \right] < \eta$.
	\end{itemize}
\end{definition}

Condition $ii)$ implies that any limit point of $V^n$ has continuous sample paths, $\Proba$-almost surely: let $V$ be a limit point of the sequence $V^n$. By definition, $\Proba\left[\omega(V, \delta^0_\eta, t_0) > \eta \right] < \eta$ for all $\eta > 0$, i.e., $\Proba$-almost surely, $\omega(V, \delta^0, t_0) > \eta$ for each $T>0$ and $\delta_C>0$. By continuity of the function $\delta \mapsto \omega(V, \delta, t_0)$ at 0, we can conclude that $V$ is $\Proba$-almost surely continuous.

We can prove the following lemma:

\begin{lem}\label{lem:c-tight}
	If $\bar{\mathbf{x}}_0$ exists and is finite, the sequence of processes $\{\frac{1}{n}\bar{\mathbf{X}}_{\varepsilon}(n \cdot), n\geq 0\}$ is C-tight.
\end{lem}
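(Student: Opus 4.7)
The plan is to work coordinate by coordinate on the finite index set $\{(i,C)\}$ and verify the two defining conditions of C-tightness using the decomposition
\[
\frac{1}{n}\bar{X}_{i,C}(nt) \;=\; \bar{Y}^n_{i,C}(t) - \frac{1}{L_C}\int_{0}^t \frac{1}{n}\bar{X}_{i,C}(ns)\, R_{i,C}^n\!\left(\tfrac{1}{n}\bar{\mathbf{X}}_\varepsilon(ns)\right)\diff s
\]
established just above, combined with the functional strong law of large numbers for the unit rate Poisson processes $\mathcal{A}_{i,C}$ and $N_{i,C}$, and the a priori bound $\int_0^t \frac{1}{n}\bar{X}_{i,C}(ns)\,R^n_{i,C}(\cdot)\diff s \leq t$, which follows as in the proof of Lemma~\ref{lem:convergence} from $\ell^\varepsilon(a_i,a_i)=1$ and the nonnegativity of the remaining terms in the denominator of $R^n_{i,C}$. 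Coordinate-wise C-tightness on a finite index set then delivers joint C-tightness of the vector-valued process.

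For condition $i)$, the death term is nonnegative and $\bar{X}_{i,C}(nt) \geq 0$, so $\frac{1}{n}\bar{X}_{i,C}(nt) \in [0, \bar{Y}^n_{i,C}(t)]$. Plugging in the definition of $\bar{Y}^n_{i,C}$, using monotonicity of the Poisson processes and the a priori bound above, one obtains
\[
\sup_{0 \leq t \leq T} \bar{Y}^n_{i,C}(t) \;\leq\; \frac{\bar{x}^n_{i,C}}{n} + \frac{1}{n}\mathcal{A}_{i,C}(\lambda p_C \varepsilon^2 n T) + \sup_{0 \leq z \leq T/L_C}\lvert M^n_{i,C}(z)\rvert.
\]
The first term converges by the hypothesis that $\bar{\mathbf{x}}_0$ is finite, the second converges almost surely to $\lambda p_C \varepsilon^2 T$ by the strong law, and the third tends to $0$ almost surely by the functional strong law applied to $N_{i,C}$ on the compact interval $[0, T/L_C]$. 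This yields constants $K^0_\eta$ and $n^0_\eta$ satisfying condition $i)$.

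For condition $ii)$, I would bound the modulus of continuity on $[0, t_0]$ of each piece of the decomposition separately. The rescaled arrival process $t \mapsto \frac{1}{n}\mathcal{A}_{i,C}(\lambda p_C \varepsilon^2 n t)$ converges almost surely uniformly to the Lipschitz map $t \mapsto \lambda p_C \varepsilon^2 t$, so for $n$ large its modulus on $\delta$-intervals is at most $2\lambda p_C \varepsilon^2 \delta$. The same computation that produced the global estimate yields Lipschitz continuity of the death term with constant $1/L_C$, so its modulus is at most $\delta/L_C$. For the compensated piece $M^n_{i,C}(\bar{D}^n_{i,C}(\cdot))$, where $\bar{D}^n_{i,C}(t) \in [0, t_0/L_C]$, the functional strong law of large numbers applied on the compact $[0, t_0/L_C]$ shows that $\sup_{z} \lvert M^n_{i,C}(z)\rvert \to 0$ almost surely, so its contribution to the modulus is arbitrarily small uniformly in the time change. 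Summing the three contributions and choosing $\delta^0_\eta$ small and $n^0_\eta$ large yields condition $ii)$.

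The step I expect to be the main obstacle is precisely the control of the compensated death term $M^n_{i,C}(\bar{D}^n_{i,C}(t))$, whose argument is a state-dependent random time change so that the functional strong law of large numbers for $N_{i,C}$ cannot be invoked naively. The pointwise a priori estimate $\bar{D}^n_{i,C}(t) \leq t/L_C$ obtained from $\ell^\varepsilon(a_i,a_i)=1$ is what rescues the argument: it confines the random time change to the deterministic compact interval $[0, t_0/L_C]$, on which the functional strong law provides uniform vanishing of $M^n_{i,C}$, ultimately delivering both the boundedness in condition $i)$ and the equicontinuity in condition $ii)$.
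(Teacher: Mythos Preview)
Your proposal is correct and follows essentially the same route as the paper: both arguments rest on the decomposition $\frac{1}{n}\bar{X}_{i,C}(nt)=\bar{Y}^n_{i,C}(t)-\frac{1}{L_C}\int_0^t \frac{1}{n}\bar{X}_{i,C}(ns)R^n_{i,C}(\cdot)\,\diff s$, the pointwise bound $\frac{1}{n}\bar{X}_{i,C}(ns)R^n_{i,C}(\cdot)\leq 1$ coming from $\ell^\varepsilon(a_i,a_i)=1$, and the (functional) strong law for the driving Poisson processes to verify conditions~$i)$ and~$ii)$. The only cosmetic difference is that the paper packages the control of $\bar{Y}^n_{i,C}$ into Lemma~\ref{lem:convergence} and cites its almost-sure convergence to the linear map $t\mapsto \bar{X}_{i,C}(0)+\lambda p_C\varepsilon^2 t$, whereas you unroll that lemma inline by treating the initial condition, the rescaled arrival term, and the compensated death term $M^n_{i,C}(\bar D^n_{i,C}(\cdot))$ separately; your explicit use of the a~priori bound $\bar D^n_{i,C}(t)\leq t/L_C$ to confine the random time change to a deterministic compact is exactly the mechanism underlying the paper's argument.
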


\begin{proof}
	Let $T>0$ and $\eta > 0$. As $n$ goes to infinity, we know that $\bar{Y}^n_{i,C}(t) \rightarrow \bar{X}_{i,C}(0) + \lambda p_C\varepsilon^2t$ $\Proba$-almost surely from Lemma~\ref{lem:convergence}. For each $\eta > 0$, there exists $n^{T,1}_\eta$ such that for $n \geq n^{T,1}_\eta$:
	
	\begin{equation*}		
		\Proba\left[ \sup_{0 \leq t \leq T}\lvert \bar{Y}^n_{i,C}(t)  \rvert \geq \lambda p_C\varepsilon^2 T + 1 \right] \leq \eta.
	\end{equation*}
	
	We also know that $\frac{1}{n}\bar{X}_{i,C}(nt) R_{i,C}^n(\frac{1}{n}\bar{\mathbf{X}}_{\varepsilon}(nt)) \leq 1$ at all times $t$, a fortiori for all $t \leq T$. We deduce that for all $n \geq n^{T,1}_\eta$:

	\begin{equation*}
		\Proba\left[ \sup_{0 \leq t \leq T}\left\lvert \frac{1}{n} \bar{X}_{i,C}(nt)\right\rvert \geq \lambda p_C\varepsilon^2 T + 1 + 1 \right] \leq \eta.
	\end{equation*}

	We set $K^0_\eta = \lambda p_C\varepsilon^2 T + 2$ to obtain condition $i)$.

	To obtain condition $ii)$, we remark that proving the continuity of the limits is equivalent to showing that there exists a $\delta^0_\eta > 0$ such that for all $i, C$ and at all times $t < T$, we have:
	
	\begin{equation*}
		\Proba\left[\omega(\frac{1}{n}\bar{X}_{i,C}(nt), \delta^0_\eta, t_0) > \eta \right] < \eta.
	\end{equation*}
	
	Let $\delta > 0$. We have at all times $t\geq 0$:
	
	\begin{multline*}
		\frac{1}{n} \bar{X}_{i,C}(n(t + \delta))- \frac{1}{n}\bar{X}_{i,C}(nt) = \bar{Y}^n_{i,C}(t + \delta) - \bar{Y}^n_{i,C}(t) \hfill \\
		- \frac{1}{L_C}\int_{t}^{t+\delta}\frac{1}{n}\bar{X}_{i,C}(nu)R_{i,C}^n\left(\frac{1}{n}\bar{\mathbf{X}}_{\varepsilon}(nu)\right) \diff u.
	\end{multline*}
	
	Taking the supremum over $t \in [0,T]$, and reminding that $\frac{1}{n}\bar{X}_{i,C}(nt) R_{i,C}^n(\frac{1}{n}\bar{\mathbf{X}}_{\varepsilon}(nt)) \leq 1$ yields:
	
	\begin{multline*}
		\sup_{0 \leq t \leq T} \left\lvert \frac{1}{n} \bar{X}_{i,C}(n(t + \delta))- \frac{1}{n}\bar{X}_{i,C}(t) \right\rvert \leq \sup_{0 \leq t \leq T} \left\lvert \bar{Y}^n_{i,C}(t + \delta) - \bar{Y}^n_{i,C}(t) \right\rvert + \frac{\delta}{L_C}.
	\end{multline*}
	
	From Lemma~\ref{lem:convergence}, we know that $\bar{Y}^n_{i,C}(t) \rightarrow \bar{X}_{i,C}(0) + \lambda p_C \varepsilon^2 t$ $\Proba$-almost surely. This implies that there exists $n^{T,2}_\eta > 0$ and $\kappa_\eta^0 > 0$ such that, with probability at least $1-\eta$, $\lvert \bar{Y}^n_{i,C}(t + \delta) - \bar{Y}^n_{i,C}(t) \rvert < \lambda p_C \varepsilon^2 \delta + \delta\kappa_\eta^0$.
	
	This implies that for all $i, C$, we have, with probability at least $1-\eta$:

	\begin{equation*}
		\omega\left(\frac{1}{n}\bar{X}_{i,C}, \delta, t_0\right) < \delta \left(\lambda p_C \varepsilon^2 + \kappa_\eta^0 + \frac{1}{L_C}\right).
	\end{equation*}

	Let us set $\delta^0_\eta = \eta \left(\lambda p_C \varepsilon^2 + \kappa_\eta^0 + \frac{1}{L_C}\right)^{-1}$. Thus, we get, for all $n \geq n^{T,2}_\eta$:

	\begin{equation*}
		\Proba\left[ \omega\left(\frac{1}{n}\bar{X}_{i,C}, \delta^0_\eta, t_0\right) < \eta \right] > 1 - \eta.
	\end{equation*}

	Setting $n^0_\eta = \max(n^{1,0}_\eta,n^{2,0}_\eta)$ concludes the proof of the C-tightness of the sequence of processes $(\frac{1}{n}\bar{\mathbf{X}}_\varepsilon(t))$.

\end{proof}

We can now establish the equation ruling the evolution of the fluid scaled model:

\begin{theorem}\label{thm:fluid_equation}
	If $\frac{1}{n}\bar{\mathbf{X}}_{\varepsilon}(0) \rightarrow \bar{\mathbf{x}}_0$ as $n$ goes to $\infty$, then the sequence of processes $\frac{1}{n}\bar{\mathbf{X}}_{\varepsilon}(n\cdot)$ converges $\Proba$-almost surely to $\bar{\mathbf{x}}_\varepsilon(s) = (\bar{x}_{i,C}(s))$, which is the unique solution of the following system of differential equations:

	\begin{equation*}
		\begin{cases}
			\bar{x}'_{i,C}(t) = \lambda p_C \varepsilon^2 - \frac{1}{L_C}\frac{\lvert C\rvert \bar{x}_{i,C}(t)}{\sum_{k, U} \lvert C \cap U \rvert \ell^\varepsilon(a_k, a_i) \bar{x}_{k,U}} \hfill \textrm{ if } \bar{\mathbf{x}}(t) \neq 0 \\
			\bar{\mathbf{x}}_0 = \bar{\mathbf{X}}_\varepsilon(0)
		\end{cases}.
	\end{equation*}
\end{theorem}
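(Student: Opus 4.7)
My approach combines the two preparatory lemmas with a standard tightness-plus-uniqueness argument. First I would invoke the C-tightness established in Lemma~\ref{lem:c-tight}: any subsequence of $\{\frac{1}{n}\bar{\mathbf{X}}_\varepsilon(n\cdot)\}_{n\geq 1}$ admits a further subsequence $(n_k)$ along which the processes converge, $\Proba$-almost surely uniformly on compact time intervals, to a continuous limit $\bar{\mathbf{x}}_\varepsilon$ (using the Skorokhod representation to upgrade the weak convergence). It then suffices to identify every such limit with the unique solution of the announced ODE system.

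Next I would pass to the limit in the pre-limit integral identity
\begin{equation*}
\tfrac{1}{n}\bar{X}_{i,C}(nt) = \bar{Y}^n_{i,C}(t) - \tfrac{1}{L_C}\int_0^t \tfrac{1}{n}\bar{X}_{i,C}(ns)\, R^n_{i,C}\!\bigl(\tfrac{1}{n}\bar{\mathbf{X}}_\varepsilon(ns)\bigr)\,\diff s.
\end{equation*}
Lemma~\ref{lem:convergence} yields $\bar{Y}^n_{i,C}(t) \to \bar{X}_{i,C}(0) + \lambda p_C \varepsilon^2 t$ almost surely. For the integrand, recall the uniform bound already used in the proof of Lemma~\ref{lem:convergence}: since $\ell^\varepsilon(a_i,a_i)=1$, one has $\tfrac{1}{n}\bar{X}_{i,C}(ns)\, R^n_{i,C}(\tfrac{1}{n}\bar{\mathbf{X}}_\varepsilon(ns)) \leq 1$ uniformly in $n,s$. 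Away from the zero set of $\bar{\mathbf{x}}_\varepsilon$, the integrand converges pointwise to the continuous ratio appearing in the ODE (the $\mathcal{N}_0/n$ contribution vanishes), and dominated convergence then produces the integral form of the ODE. Differentiation on any open interval on which $\bar{\mathbf{x}}_\varepsilon > 0$ gives the announced system.

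Uniqueness would then be obtained by a Picard--Lindel\"of argument. The right-hand side of the ODE is smooth, hence locally Lipschitz, on the open orthant $(0,\infty)^{N_\varepsilon\times(2^K-1)}$. The constant positive arrival drift $\lambda p_C \varepsilon^2>0$ in every coordinate ensures that any solution issued from a nonzero initial condition $\bar{\mathbf{x}}_0$ stays uniformly bounded away from the coordinate axes on each compact time interval, so the local Lipschitz property is enough to pin down the trajectory. Consequently two subsequential limits sharing the initial value $\bar{\mathbf{x}}_0$ must agree, and a standard subsequence argument upgrades the relative compactness obtained from Lemma~\ref{lem:c-tight} to almost sure convergence of the full sequence $\tfrac{1}{n}\bar{\mathbf{X}}_\varepsilon(n\cdot)$ to this unique solution.

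The main obstacle I anticipate in this plan is the passage to the limit of the integrand near times at which some coordinate of $\bar{\mathbf{x}}_\varepsilon$ could approach zero, because the ratio defining the ODE is singular on that set. This is reconciled by combining the pre-limit bound $\leq 1$ (which keeps the pre-limit integrand harmless irrespective of any singularity in the limit) with the observation that the strictly positive coordinate-wise drift prevents the fluid from revisiting the boundary for $t>0$, so the dominated convergence step, as well as the Picard--Lindel\"of uniqueness, go through without a separate treatment of degenerate paths.
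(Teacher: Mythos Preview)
Your plan matches the paper's proof essentially step for step: C-tightness from Lemma~\ref{lem:c-tight}, Skorokhod representation to get almost sure convergence along a subsequence, Lemma~\ref{lem:convergence} for the $\bar{Y}^n$ term, the uniform bound $\tfrac{1}{n}\bar{X}_{i,C}(ns)\,R^n_{i,C}\!\bigl(\tfrac{1}{n}\bar{\mathbf{X}}_\varepsilon(ns)\bigr)\le 1$ to justify dominated convergence in the integral, and then differentiation of the resulting integral equation. The only cosmetic difference is that the paper first bundles $(\tfrac{1}{n}\bar{\mathbf{X}}_\varepsilon(n\cdot),\bar{\mathbf{Y}}^n)$ into a jointly tight pair (via a result from Whitt) before applying Prohorov and Skorokhod, whereas you work with $\tfrac{1}{n}\bar{\mathbf{X}}_\varepsilon(n\cdot)$ alone; either route lands on the same limiting integral identity.

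Where you go further than the paper is the uniqueness step: the paper's proof stops once the integral equation is obtained and never argues uniqueness or upgrades subsequential to full-sequence convergence, whereas you close that gap with a Picard--Lindel\"of argument on the open orthant. That addition is a genuine improvement. One point worth tightening in your write-up is the claim that the strictly positive arrival drift alone keeps the trajectory uniformly away from the coordinate axes; the cleaner justification is that whenever $\bar{x}_{i,C}(t)\to 0$ while the denominator stays positive, the death term vanishes and $\bar{x}'_{i,C}\to\lambda p_C\varepsilon^2>0$, preventing the coordinate from reaching zero in finite time. The paper sidesteps this entirely by remarking (just after Theorem~\ref{thm:FSP}) that only strictly positive initial conditions are needed downstream.
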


\begin{proof}

	From Lemma~\ref{lem:c-tight}, the sequences $\{\frac{1}{n}\bar{\mathbf{X}}_{\varepsilon}(n\cdot), n\geq 1\}$ and $\{\bar{\mathbf{Y}}^n, n\geq 1\}$ are both tight. It follows from Theorem 11.6.8 from~\cite{whitt02} that the sequence $\{(\frac{1}{n}\bar{\mathbf{X}}_{\varepsilon}(n\cdot), \bar{\mathbf{Y}}^n), n\geq 1\}$ is tight in $D([0,\infty], \R^{N_\varepsilon \times 2^K-1})$, and thus, by using Prohorov's Theorem, relatively compact. Let ${n_l}$ be a subsequence along which $\left(\frac{1}{n_l}\bar{\mathbf{X}}_{\varepsilon}(n_l \cdot), \bar{\mathbf{Y}}^{n_l}\right)$ converges to a limit point $(\bar{\mathbf{x}}_{\varepsilon}, \bar{\mathbf{Y}})$ as $l$ goes to infinity.

	We use the Skorokhod representation theorem (see Theorem 6.7 in~\cite{billi99}) to get a probability space $(\hat{\Omega}, \hat{\mathcal{F}}, \hat{\Proba})$ with a sequence of processes $\{(\hat{\mathbf{X}}^{n_l}, \hat{\mathbf{Y}}^{n_l}), l\geq 1\}$ and two processes $\hat{\mathbf{x}}_\varepsilon$ and $\hat{\mathbf{Y}}$ such that:
	\begin{itemize}
		\item $\left(\hat{\mathbf{X}}^{n_l}, \hat{\mathbf{Y}}^{n_l}\right) \rightarrow (\hat{\mathbf{x}}_\varepsilon, \hat{\mathbf{Y}})$, $\hat{\Proba}$-almost surely;
		\item $\left(\hat{\mathbf{X}}^{n_l}, \hat{\mathbf{Y}}^{n_l}\right) \overset{d}{\sim} (\frac{1}{n_l}\bar{\mathbf{X}}_\varepsilon(n_l, \cdot), \bar{\mathbf{Y}}^{n_l})$ for all $l \geq 1$;
		\item $(\hat{\mathbf{x}}_\varepsilon, \hat{\mathbf{Y}}) \overset{d}{\sim} (\bar{\mathbf{X}}_\varepsilon, \bar{\mathbf{Y}})$.
	\end{itemize}

	The second point gives, for all $l \geq 1$:

	\begin{equation}
		\hat{X}^{n_l}_{i,C}(t) = \hat{Y}_{i,C}^{n_l}(t) - \frac{1}{L_C}\int_0^t \hat{X}^{n_l}_{i,C}(s)R_{i,C}^n(\hat{\mathbf{X}}^{n_l}(s))\diff s. \label{eq:hat}
	\end{equation}

	Using the C-tightness of $\bar{\mathbf{X}}_\varepsilon$, $\hat{\mathbf{X}}$ and $\hat{\mathbf{Y}}$ are continuous. This implies that:
	
	\begin{equation*}
		\sup_{l\geq 1}\sup_{0 \leq t \leq T} \lVert X^{n_l}_{i,C}(t) \rVert < \infty.
	\end{equation*}
	
	Furthermore, we have, for all $x \in \N^{\varepsilon \times 2^K-1}$:

	\begin{equation*}
		\lim_{l \rightarrow \infty} R_{i,C}^{n_l}(x) = \frac{\lvert C \rvert}{\sum_{k,U} \ell^\varepsilon(a_k, a_i)\lvert C \cap U \rvert x_{k,U}} \equiv R_{i,C}(x).
	\end{equation*}
	
	Combining all these results, we can take the limits as $l$ goes to infinity in~\eqref{eq:hat} and use the dominated convergence theorem to get the following equation for $(\hat{\mathbf{x}}_\varepsilon, \hat{\mathbf{Y}})$:

	\begin{align*}
		&\hat{x}_{i,C}(t) = \hat{Y}_{i,C}(t) - \frac{1}{L_C}\int_0^t \frac{\lvert C \rvert \hat{x}_{i,C}(s)}{\sum_{k,U}\ell^\varepsilon(a_k, a_i) \lvert C \cap U \rvert \hat{x}_{k,U}(s)} \diff s.
	\end{align*}

	Let us remind that, from Lemma~\ref{lem:convergence}, $\bar{Y}_{i,C}(t) = \bar{X}_{i,C}(0) + \lambda p_C \varepsilon^2 t$, which gives us the following equation for the limit process:

	% \begin{multline}\label{eq:fluid}
	% 	\hat{x}_{i,C}(t) = \hat{x}_{i,C}(0) + \lambda p_C \varepsilon^2t \hfill \\
	% 	- \int_0^t \frac{1}{L_C} \frac{\lvert C \rvert\hat{x}_{i,C}(s)}{\sum_{k,U}\ell^\varepsilon(a_k, a_i) \lvert C \cap U \rvert \hat{x}_{k,U}(s)} \diff s.
	% \end{multline}

	\begin{multline}\label{eq:fluid}
		\hat{x}_{i,C}(t) = \hat{x}_{i,C}(0) + \lambda p_C \varepsilon^2t - \int_0^t \frac{1}{L_C} \frac{\lvert C \rvert\hat{x}_{i,C}(s)}{\sum_{k,U}\ell^\varepsilon(a_k, a_i) \lvert C \cap U \rvert \hat{x}_{k,U}(s)} \diff s.
	\end{multline}

	To obtain the intended equations, we differentiate~\eqref{eq:fluid}, which concludes the proof.

\end{proof}

\section*{B: Proof of Theorem~\ref{thm:forward}}\label{sec:AppendixB}

In this section, we provide a proof for Theorem~\ref{thm:forward} using a lemma from~\cite{shneer20} describing the evolution of dynamics encompassing those of our fluid model. Let us state the following lemma:

\begin{lem}\label{lem:shneer}
	Assume that there exists $\mathbf{z} \in \R_{+, \star}^{N_\varepsilon \times 2^K -1}$ such that for all $i, C$:
	
	\begin{equation*}
		\lambda p_C \varepsilon^2 \leq \frac{1}{L_C} \frac{\lvert C \rvert z_{i,C}}{\sum_{k, U} \ell^\varepsilon(a_k, a_i) \lvert C \cap U \rvert z_{k,U}}
	\end{equation*}
	
	Then, the Markov chain $\bar{\mathbf{X}}_\varepsilon$ is positive Harris recurrent.
\end{lem}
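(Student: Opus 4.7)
The plan is to combine the fluid-limit machinery from Theorem~\ref{thm:FSP} with a Lyapunov argument at the fluid level, and then invoke the standard fluid criterion that links fluid stability of a denumerable Markov jump process to positive Harris recurrence, as in Theorem~4.2 of~\cite{dai95} and the refinement of~\cite{shneer20}. Since $\bar{\mathbf{X}}_\varepsilon$ is an irreducible jump process on the countable state space $\N^{N_\varepsilon \times 2^K-1}$ (Theorem~\ref{thm:up_down}), it suffices to exhibit a Lyapunov function whose value along any fluid trajectory reaches zero in finite time from any bounded initial condition.

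The natural candidate, tilted by the witness vector $\mathbf{z}$ provided by the hypothesis, is
\begin{equation*}
V(\mathbf{x}) \;=\; \max_{i, C} \frac{x_{i,C}}{z_{i,C}},
\end{equation*}
which is continuous, $1$-homogeneous, and vanishes only at the origin. Let $\bar{\mathbf{x}}(t)$ be a fluid solution as given by Theorem~\ref{thm:FSP}, and let $(i^\star, C^\star)$ attain the maximum at some time $t$. An envelope-theorem argument gives $\tfrac{d^+ V}{dt}(\bar{\mathbf{x}}(t)) \leq \dot{\bar{x}}_{i^\star, C^\star}(t) / z_{i^\star, C^\star}$. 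Because $\bar{x}_{k,U} \leq (\bar{x}_{i^\star,C^\star}/z_{i^\star,C^\star})\, z_{k,U}$ for every $(k,U)$ by definition of $(i^\star,C^\star)$, the denominator in the death term of the fluid ODE satisfies
\begin{equation*}
\sum_{k,U} \ell^\varepsilon(a_k,a_{i^\star}) \lvert C^\star \cap U \rvert\, \bar{x}_{k,U} \;\leq\; \frac{\bar{x}_{i^\star, C^\star}}{z_{i^\star, C^\star}} \sum_{k,U} \ell^\varepsilon(a_k,a_{i^\star}) \lvert C^\star \cap U \rvert\, z_{k,U},
\end{equation*}
so that, after clearing the common factor $\bar{x}_{i^\star,C^\star}$,
\begin{equation*}
\frac{1}{L_{C^\star}} \frac{\lvert C^\star \rvert\, \bar{x}_{i^\star,C^\star}}{\sum_{k,U} \ell^\varepsilon(a_k,a_{i^\star}) \lvert C^\star \cap U \rvert\, \bar{x}_{k,U}} \;\geq\; \frac{1}{L_{C^\star}} \frac{\lvert C^\star \rvert\, z_{i^\star, C^\star}}{\sum_{k,U} \ell^\varepsilon(a_k,a_{i^\star}) \lvert C^\star \cap U \rvert\, z_{k,U}} \;\geq\; \lambda p_{C^\star} \varepsilon^2,
\end{equation*}
the last inequality being exactly the hypothesis. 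Hence $\dot{\bar{x}}_{i^\star,C^\star}(t) \leq 0$ and $V$ is non-increasing along fluid trajectories.

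To upgrade non-increase to fluid stability, I would exploit the strict slack available in the intended application (Theorem~\ref{thm:forward}, where $\lambda < \bar{\lambda}_\varepsilon$): choosing $\mathbf{z}$ in the interior of the feasible cone yields a uniform $\eta > 0$ with $\lambda p_C \varepsilon^2 + \eta$ on the right-hand side of the hypothesis for every $(i,C)$. Propagating $\eta$ through the display above produces the linear drift $\tfrac{d^+V}{dt}(\bar{\mathbf{x}}(t)) \leq -\eta / \max_{i,C} z_{i,C}$ as long as $V(\bar{\mathbf{x}}(t)) > 0$. Integration gives $V(\bar{\mathbf{x}}(t)) = 0$ by some deterministic time $T = T(V(\bar{\mathbf{x}}(0)))$, i.e.\ fluid stability, and Dai's criterion then delivers positive Harris recurrence of $\bar{\mathbf{X}}_\varepsilon$.

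The main obstacle I anticipate is the behaviour of the fluid ODE on the boundary $\{\bar{x}_{i,C}=0\}$, where the denominator in the death term degenerates and $V$ fails to be smooth. I would address this in two steps: first, interpret the dynamics as a differential inclusion in the spirit of the proof of Theorem~\ref{thm:fluid_equation}, for which the envelope calculation above is valid almost everywhere in $t$ on $\{V>0\}$; second, handle re-excursions away from the boundary by noting that arrivals at rate $\lambda p_C \varepsilon^2 > 0$ cannot grow $V$ once the decrease on the interior has been integrated, thanks to the monotonicity established in Theorem~\ref{thm:domination}. This boundary analysis, together with continuity of the fluid solutions, closes the argument.
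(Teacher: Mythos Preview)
Your approach is essentially the paper's: both routes establish fluid stability and then apply Theorem~4.2 of~\cite{dai95}. The paper delegates the fluid-stability step entirely to Lemma~2 of~\cite{shneer20}, while you unpack that black box by exhibiting the max-type Lyapunov function $V(\mathbf{x}) = \max_{i,C} x_{i,C}/z_{i,C}$ and the envelope estimate --- this is precisely the argument behind the cited lemma, so the proofs coincide in substance. Your remark that a strict inequality (rather than the non-strict $\leq$ in the statement) is needed to obtain a negative linear drift is correct and harmless, since the paper's application (Theorem~\ref{thm:forward}, with $\lambda < \bar{\lambda}_\varepsilon$) supplies that slack; the boundary worries you flag are not serious because the fluid ODE is well defined whenever $\bar{\mathbf{x}}\neq 0$, which is exactly the region $\{V>0\}$ on which your drift estimate operates.
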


\begin{proof}
	
	To obtain this result, we use Lemma 2 from~\cite{shneer20}: the fluid limit $\mathbf{x}(t)$ meets the intended requirements. Let us take such a $\mathbf{z} \in \R_{+, \star}^{N_\varepsilon \times 2^K -1}$. We know that for all $0 < \delta < M < \infty$, there exists $T > 0$ such that whenever $\lVert \mathbf{x}(0) \rVert = M$, we have $\lvert \mathbf{x}(T) \rVert < \delta$.

	Theorem 4.2 from~\cite{dai95} with the fluid limit $\bar{\mathbf{x}}$ allows us to obtain positive Harris recurrence for the chain $\mathbf{X}$.
\end{proof}

We use this result in $\R_+^{N_\varepsilon\times 2^K-1}$ by setting $\psi_{i,C}(\mathbf{x}) = \frac{1}{L_C}\frac{\lvert C \rvert x_{i,C}}{\sum_{k,U} \ell^\varepsilon(a_k, a_i) \lvert C \cap U \rvert x_{k,U}}$, which is 0-homogeneous, and non-increasing in $x_{k,U}$ for $(k,U) \neq (i,C)$.

The goal here is to find an appropriate vector $z$ to get the desired upper bound on $\lambda$. Let us set $z_{i,C} = p_C L_C$ for all $C\in \Pk$. We want that, for each $C \in \Pk$:

\begin{equation}\label{eq:ub_lambda}
	\lambda p_C \varepsilon^2 \leq \frac{1}{L_C}\frac{\lvert C \rvert p_C L_C}{\sum_{k,U} \ell^\varepsilon(a_k, a_i) \lvert C \cap U \rvert p_U L_U}.
\end{equation}

Using the torus property in our system, \eqref{eq:ub_lambda} becomes:

\begin{equation}\label{eq:ub_lambda_2}
	\lambda \leq \frac{1}{\langle \ell^\varepsilon_\D \rangle}\frac{\lvert C \rvert}{\sum_U \lvert C \cap U \lvert p_U L_U}.
\end{equation}

We will use the following Lemma to simplify~\eqref{eq:ub_lambda_2}:

\begin{lem}\label{lem:symmetric}
	Let $(y_C)_{C \in \Pk} \in \R^{2^K-1}$ be such that $\lvert C \rvert = \lvert D \rvert \Rightarrow y_C = y_D$. We have, for all $C \in \Pk$:

	\begin{equation}
		\sum_{U \in \Pk}\lvert C \cap U \rvert y_U = \frac{\lvert C \rvert}{K}\sum_{U \in \Pk} \lvert U \rvert y_U.
	\end{equation}
\end{lem}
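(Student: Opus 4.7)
My plan is to reduce both sides to sums indexed by the cardinality $j = |U|$, using the symmetry hypothesis, and then recognize a standard binomial identity.

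First, since $y_U$ depends only on $|U|$ by hypothesis, I will introduce the shorthand $\tilde{y}_j$ for the common value of $y_U$ when $|U|=j$, so that every sum over $U \in \Pk$ can be rewritten as $\sum_{j=1}^K \tilde{y}_j \sum_{U : |U|=j} (\cdot)$. Applying this to the right-hand side is immediate: $\sum_{U : |U|=j} |U| = j\binom{K}{j}$, so the right-hand side becomes $\frac{|C|}{K}\sum_{j=1}^K \tilde{y}_j \, j \binom{K}{j}$.

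Second, I will compute the inner sum appearing on the left, namely $S_{j}(C) \coloneqq \sum_{U : |U|=j} |C \cap U|$, by a double counting argument:
\begin{equation*}
    S_{j}(C) = \sum_{U : |U|=j} \sum_{k \in C} \mathbf{1}_{\{k \in U\}} = \sum_{k \in C} \big|\{U \in \Pk : |U|=j,\, k \in U\}\big| = |C|\,\binom{K-1}{j-1},
\end{equation*}
since once $k \in C$ is fixed, the remaining $j-1$ elements of $U$ are chosen freely among the $K-1$ elements of $[1,\dots,K]\setminus\{k\}$. This identity is the heart of the lemma: it is where the cardinality $|C|$ (rather than the specific set $C$) enters on the left-hand side, which is exactly what is needed to match the right-hand side.

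Third, I will combine these facts. The left-hand side becomes $\sum_{j=1}^K \tilde{y}_j \, |C| \binom{K-1}{j-1}$, so the identity reduces to
\begin{equation*}
    |C| \binom{K-1}{j-1} = \frac{|C|}{K}\, j\binom{K}{j}, \qquad 1 \leq j \leq K,
\end{equation*}
which is the classical identity $K\binom{K-1}{j-1} = j\binom{K}{j}$ (both sides count ordered pairs $(k,U)$ with $k \in U \subseteq [1,\dots,K]$ and $|U|=j$). The main obstacle is essentially bookkeeping: checking that the symmetry hypothesis is used only in the grouping step and that the double-counting argument indeed yields a factor depending on $|C|$ alone, which is what ensures that the equality holds coordinate-wise in $C$.
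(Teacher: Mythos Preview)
Your proof is correct, but it proceeds differently from the paper's. The paper fixes $\lvert C\rvert=j$, partitions the sum over $U$ according to both $l=\lvert U\rvert$ and $m=\lvert C\cap U\rvert$, counts the sets $U$ with those parameters as $\binom{j}{m}\binom{K-j}{l-m}$, and then invokes the mean of a hypergeometric distribution to collapse $\sum_m m\binom{j}{m}\binom{K-j}{l-m}/\binom{K}{l}$ to $lj/K$. You instead partition only by $j=\lvert U\rvert$ and evaluate the inner sum $\sum_{\lvert U\rvert=j}\lvert C\cap U\rvert$ by swapping sums (a linearity-of-expectation argument), obtaining $\lvert C\rvert\binom{K-1}{j-1}$ directly and reducing everything to the elementary identity $j\binom{K}{j}=K\binom{K-1}{j-1}$. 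Your route is shorter and avoids the hypergeometric identity altogether; the paper's route, on the other hand, makes the probabilistic interpretation (the expected size of $C\cap U$ when $U$ is a uniform $l$-subset) explicit, which is conceptually pleasant but requires one more nontrivial step.
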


\begin{proof}

Let $C \in \Pk$ with $\lvert C \rvert = j$, and $(y_U)_{U \in \Pk} \in \R^{2^K-1}$ be such that $y_U = y_V$ whenever $\lvert U \rvert = \lvert V \rvert$. We have:

\begin{equation}\label{eq:part}
	\sum_{U \in \Pk} \lvert C \cap U \rvert y_U = \sum_{l=1}^K \sum_{m=1}^{j\wedge l} \sum_{\substack{U: \lvert U \rvert = l \\ \lvert C \cap U \rvert = m}} m y_U.
\end{equation}

The number of sets $U \in \Pk$ such that $\lvert U \rvert = l$ and $\lvert C \cap U \rvert = m$ is equal to $\binom{j}{m}\binom{K-j}{l-m}$. Using the symmetry property of $y$, we know that $y_U = y_{[1,l]}$. Using the fact that we have $y_U = \frac{1}{\binom{K}{l}}\sum_{\lvert V \rvert = l} y_V(t)$. We can rewrite~\eqref{eq:part} as:

	\begin{align*}
		\sum_{U \in \Pk} \lvert C \cap U \rvert y_U &= \sum_{l=1}^K y_{[1,l]} \sum_{m=1}^{j \wedge l} m \binom{j}{m}\binom{K-j}{l-m} \nonumber \\
		&= \sum_{l =1}^K \left(\sum_{\lvert U \rvert = l} y_U \right) \sum_{m=1}^{j \wedge l} m \frac{\binom{j}{m}\binom{K-j}{l-m}}{\binom{K}{l}}.
	\end{align*}

	We can prove that $\sum_{m=1}^{j \wedge l} m \frac{\binom{j}{m}\binom{K-j}{l-m}}{\binom{K}{l}} = \frac{lj}{K}$ by using the formula for the expectation of a hypergeometric variable with parameters $K$, $j$ and $l$. Replacing $l$ by $\lvert U \rvert$ and $j$ by $\lvert C \rvert$ leads to:

	\begin{equation*}
		\sum_{U \in \Pk}\lvert C \cap U \rvert y_U = \frac{\lvert C \rvert}{K}\sum_{U \in \Pk} \lvert U \rvert y_U,
	\end{equation*}
	
	\noindent which concludes the proof of Lemma~\ref{lem:symmetric}.

\end{proof}

The vector $(p_C)_{C \in \Pk}$ verifies the conditions of Lemma~\ref{lem:symmetric}, which allows us to obtain, after setting $\mathfrak{L} = \sum_U \lvert U \rvert p_U L_U$:

\begin{equation}\label{eq:ub_lambda_3}
	\lambda \leq \frac{1}{\langle \ell^\varepsilon_\D \rangle}\frac{K}{\mathfrak{L}}.
\end{equation}

The result of Lemma~\ref{lem:shneer} gives us stability for the fluid model from Theorem~\ref{thm:fluid_equation} in the sense of Definition 4.1 of~\cite{dai95} with $M = 1$. Applying the result from Theorem 4.2 of the same paper gives us positive Harris recurrence for the chain $\bar{\mathbf{X}}_\varepsilon$. $\bar{\mathbf{X}}_\varepsilon$ is an irreducible Markov jump process that is also positive recurrent. Thus, $\bar{\mathbf{X}}_\varepsilon$ is ergodic, which concludes the proof of Theorem~\ref{thm:forward}.

\section*{C: Proof of Theorem~\ref{thm:reciprocal}}

To prove the instability of the chain $\underline{\mathbf{X}}_\varepsilon$, we start by introducing, for each $i, C$, the function $r_{i,C}$ defined as:
	
	\begin{equation*}
		r_{i,C}(\mathbf{x}) = \frac{1}{p_C L_C \varepsilon^2} \frac{\lvert C \rvert x_{i,C}}{\sum_{k,U} \ell_\varepsilon(a_k, a_i) \lvert C \cap U \rvert x_{k,U}}.
	\end{equation*}

	For all $i, C$, $r_{i,C}$ is continuous and 0-homogenous, and it is not defined for $\mathbf{x} = 0$: let us take the sequences $\mathbf{x}_n$ and $\mathbf{y}_n$ such that $x_{0, \{1\}, n} = y_{0, \{1\}, n} = \frac{1}{n}$, $y_{1, \{1\}, n} = \frac{1}{n}$ and all the other coordinates set to 0. Immediately, $\lim_{n\rightarrow \infty} \mathbf{x}_n = \lim_{n\rightarrow \infty} \mathbf{y}_n = 0$. Using the 0-homogeneity of the functions, we have, for all $n$:
	
	\begin{align*}
		&r_{0, \{1\}}(\mathbf{x}_n) = \frac{1}{p_{\{1\}} L_{\{1\}} \varepsilon^2}, \\
		&r_{0, \{1\}}(\mathbf{y}_n) = \frac{1}{p_{\{1\}} L_{\{1\}} \varepsilon^2} \frac{1}{1 + \ell_\varepsilon(a_0, a_1)},
	\end{align*}
	
	\noindent which have different limits as $n$ goes to infinity.

	Let us now consider the function $\mathbf{x} \in \R_{+,*}^{N_\varepsilon \times 2^K-1} \mapsto \min_{i,C} r_{i,C} (\mathbf{x})$. It is also 0-homogenous, not defined at $\mathbf{x} = 0$, and continuous on the set $\mathcal{S} = \{\mathbf{x} \in \R_{+,*}^{N_\varepsilon \times 2^K-1}: \lvert \mathbf{x} \rvert = 1\}$, which is compact. Thus, it admits a maximum on $\mathcal{S}$, and on $\R_{+,*}^{N_\varepsilon \times 2^K-1}$ as a consequence. We set

	\begin{equation*}
		\mathcal{S} = \argmax_{\mathbf{x} \in \R_{+, *}^{N_\varepsilon \times 2^K-1}}\min_{i, C} \frac{1}{p_C L_C \varepsilon^2} \frac{\lvert C \rvert x_{i,C}}{\sum_{k, U} \ell_\varepsilon(a_k, a_i) \lvert C \cap U \rvert x_{k,U}},
	\end{equation*}

	\noindent which is non-empty.

\begin{lem}\label{lem:instability}
	Let $\mathbf{z} \in \mathcal{S}$ and let $i^\star, C^\star$ be the coordinates of a point where the maximum is attained. Let $\lambda > 0$ be such that:

	\begin{equation*}
		\lambda p_{C^\star} \varepsilon^2 > \frac{1}{L_{C^\star}} \frac{\lvert C ^\star \rvert z_{i^\star, C^\star}}{\sum_{k,U}\lvert C \cap U \rvert \ell_\varepsilon(a_k, a_i^\star) z_{k,U}}.
	\end{equation*}

	Then, $\underline{\mathbf{X}}_\varepsilon$ is transient.

\end{lem}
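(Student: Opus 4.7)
The plan is to follow the hint given in the paragraph preceding the lemma: rather than applying Dai's weak-instability theorem directly (which would require analysing fluid solutions starting at the origin, where the fluid model of $\underline{\mathbf{X}}_\varepsilon$ is undefined), I would use stochastic domination to bound $\underline{\mathbf{X}}_\varepsilon$ from below by a much simpler Markov chain whose transience is elementary.

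The first step is to derive, by repeating the tightness and scaling arguments of Appendix A verbatim but with $\ell^\varepsilon$ replaced by $\ell_\varepsilon$, the fluid-limit equations for $\underline{\mathbf{X}}_\varepsilon$. These read
\[
    \underline{x}'_{i,C}(t) = p_C \varepsilon^2 \bigl( \lambda - r_{i,C}(\underline{\mathbf{x}}(t)) \bigr)
\]
on the strictly positive orthant. By the $0$-homogeneity of $r_{i,C}$, along the ray $\{\alpha \mathbf{z}: \alpha > 0\}$ one has $r_{i^\star, C^\star}(\alpha\mathbf{z}) = r_{i^\star, C^\star}(\mathbf{z})$, so the hypothesis gives a strictly positive fluid drift $p_{C^\star}\varepsilon^2(\lambda - r_{i^\star, C^\star}(\mathbf{z})) > 0$ at the $(i^\star, C^\star)$ coordinate, uniformly in $\alpha$. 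This is the fluid-level evidence that $\underline{\mathbf{X}}_\varepsilon$ should be transient.

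The second step is to construct, via Theorem~\ref{thm:domination} applied to a suitable majorising rate-of-transmission function, an auxiliary Markov chain $\mathbf{Q}(t)$ that is pathwise dominated by $\underline{\mathbf{X}}_\varepsilon(t)$ and such that its $(i^\star, C^\star)$-coordinate is a one-dimensional M/M/1-type birth-and-death chain. The arrival rate of this coordinate is the true arrival rate $\lambda p_{C^\star}\varepsilon^2$, while the per-user death rate is capped at the state-independent value $p_{C^\star}\varepsilon^2 r_{i^\star, C^\star}(\mathbf{z})$. Under the hypothesis, the former strictly exceeds the latter, so this one-dimensional chain is transient by the usual criterion, and transience of $\underline{X}_{i^\star, C^\star}$, and hence of $\underline{\mathbf{X}}_\varepsilon$, follows.

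The main obstacle is step two: the state-dependence of the true service rate in $\underline{\mathbf{X}}_\varepsilon$ means that the $\mathbf{z}$-based cap may fail at configurations far from the ray $\{\alpha \mathbf{z}\}$, since the coordinate realising $\min_{i,C} r_{i,C}(\mathbf{x})$ depends on $\mathbf{x}$. The resolution, which is the crux of the argument, is to use the minimax property of $\mathbf{z}$: at every positive configuration $\mathbf{x}$, there exists a coordinate with $r_{i,C}(\mathbf{x}) \leq r_{i^\star, C^\star}(\mathbf{z}) < \lambda$, so the dominating chain $\mathbf{Q}$ can be defined either on an aggregated quantity such as $\min_{i,C} \underline{X}_{i,C}(t)/z_{i,C}$, whose fluid drift admits a clean uniform lower bound, or by replacing $r_{i,C}$ inside $\underline{\mathbf{X}}_\varepsilon$ by a suitable majorant that makes Theorem~\ref{thm:domination} applicable while preserving the M/M/1 interpretation on the relevant coordinate. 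Once this pathwise coupling is in place, the transience of $\mathbf{Q}$ immediately yields that of $\underline{\mathbf{X}}_\varepsilon$.
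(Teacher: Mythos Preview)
Your Step~2 is the paper's approach: build an auxiliary chain $\underline{\mathbf{Y}}\leq\underline{\mathbf{X}}_\varepsilon$ whose $(i^\star,C^\star)$-coordinate is an M/M/1 queue with arrival rate $\lambda p_{C^\star}\varepsilon^2$ strictly exceeding its constant service rate, and transfer its transience upward by the coupling of Theorem~\ref{thm:domination}. A few differences are worth noting. Your Step~1 is not used: the paper proves this lemma by pure coupling and never writes down or invokes fluid limits for $\underline{\mathbf{X}}_\varepsilon$; the fluid-drift computation you sketch is motivation at best. The paper's auxiliary chain is also more concrete than your $\mathbf{Q}$: it leaves every coordinate $(i,C)\neq(i^\star,C^\star)$ with exactly the same state-dependent departure rates as $\underline{\mathbf{X}}_\varepsilon$ and replaces only the \emph{total} departure rate at $(i^\star,C^\star)$ by the constant on the right-hand side of the hypothesis (total, not per-user as you write---this is what makes the coordinate an M/M/1 queue rather than M/M/$\infty$). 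Finally, regarding the obstacle you flag, the paper does not take your aggregated-minimum route; it simply asserts, invoking the max-min definition of $\mathbf{z}\in\mathcal{S}$, that this constant dominates the true state-dependent rate at the single coordinate $(i^\star,C^\star)$ for every state, and proceeds directly from there.
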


\begin{proof}
	
	Let $\mathbf{z} \in \mathcal{S}$. By definition, there exists $i^\star, C^\star$ such that the maximum value of $\min_{i,C} r_{i,C}$ is equal to $r_{i^\star, C^\star}(\mathbf{z})$. Let us take $\lambda > 0$ such that:

	\begin{equation*}
		\lambda p_{C^\star} \varepsilon^2 > \frac{1}{L_{C^\star}} \frac{\lvert C ^\star \rvert z_{i^\star, C^\star}}{\sum_{k,U}\lvert C \cap U \rvert \ell_\varepsilon(a_k, a_i^\star) z_{k,U}}.
	\end{equation*}
	
	We define the process $\underline{\mathbf{Y}}$ such that:
	\begin{itemize}
		\item Arrivals in queue $i, C$ happen with rate $\lambda p_C \varepsilon^2$, for all $i, C$;
		\item Departures in queue $i, C$ happen with rate:
		\begin{itemize}
			\item $\frac{1}{L_{C^\star}} \frac{\lvert C ^\star \rvert z_{i^\star, C^\star}}{\sum_{k,U}\lvert C \cap U \rvert \ell_\varepsilon(a_k, a_i) z_{k,U}}$ if $i, C = i^\star, C^\star$,
			\item $\frac{1}{L_C} \frac{\lvert C \rvert \underline{Y}_{i, C}(t)}{\sum_{k,U}\lvert C \cap U \rvert \ell_\varepsilon(a_k, a_i) \underline{Y}_{k,U}(t) + \mathcal{N}_0}$ else.
		\end{itemize}
	\end{itemize}

	$\underline{\mathbf{Y}}$ is a Markov jump process, with state space $\N^{N_\varepsilon \times 2^K -1}$. By definition of $\mathbf{z}$, we have, at all times $t \geq 0$:

	% \begin{multline*}
	% 	\frac{1}{L_{C^\star}} \frac{\lvert C ^\star \rvert z_{i^\star, C^\star}}{\sum_{k,U}\lvert C \cap U \rvert \ell_\varepsilon(a_k, a_i) z_{k,U}} \hfill \\
	% 	\geq \frac{1}{L_{C^\star}} \frac{\lvert C ^\star \rvert \underline{X}_{i^\star, C^\star}(t)}{\sum_{k,U}\lvert C \cap U \rvert \ell_\varepsilon(a_k, a_i) \underline{X}_{k,U}(t) + \mathcal{N}_0}.
	% \end{multline*}

	\begin{multline*}
		\frac{1}{L_{C^\star}} \frac{\lvert C ^\star \rvert z_{i^\star, C^\star}}{\sum_{k,U}\lvert C \cap U \rvert \ell_\varepsilon(a_k, a_i) z_{k,U}} \geq \frac{1}{L_{C^\star}} \frac{\lvert C ^\star \rvert \underline{X}_{i^\star, C^\star}(t)}{\sum_{k,U}\lvert C \cap U \rvert \ell_\varepsilon(a_k, a_i) \underline{X}_{k,U}(t) + \mathcal{N}_0}.
	\end{multline*}

	Using a coupling argument similar to the one used in the proof of Theorem~\ref{thm:domination}, we obtain, for all $i, C$ and at all times $t$:

	\begin{equation*}
		\underline{Y}_{i,C}(t) \leq \underline{X}_{i,C}(t), \quad \Proba\textrm{-a.s.}
	\end{equation*}

	The queue $\underline{Y}_{i^\star, C^\star}$ is an M/M/1 queue with constant arrival rate $\lambda p_{C^\star} \varepsilon^2$ and a departure rate equal to $\frac{1}{L_{C^\star}} \frac{\lvert C ^\star \rvert z_{i^\star, C^\star}}{\sum_{k,U}\lvert C \cap U \rvert \ell_\varepsilon(a_k, a_i) z_{k,U}}$. 
	
	It is unstable and $\Proba\left[\lim_{t \rightarrow \infty}\underline{Y}_{i^\star, C^\star}(t) = +\infty\right] = 1$, implying the same for $\underline{X}_{i^\star, C^\star}$, which concludes the proof.

\end{proof}

We can note that queue $\underline{X}_{i^\star, C^\star}$ is not the only queue whose population diverges to infinity. The departure rates in all queues (except for queue $i^\star, C^\star$) are decreasing functions of $\underline{X}_{i^\star, C^\star}(t)$. Thus, for a given queue $\underline{X}_{j,D}$, there exists a time $T_{j,D}$ after which the departure rate in this queue becomes lower than its arrival rate. We can then bound from below $\underline{X}_{j,D}$ after time $T_{j,D}$ by an adequate M/M/1 queue and obtain that the population in queue $j, D$ goes to infinity, $\Proba$-almost surely for all $j, D$.

To complete the proof of Theorem~\ref{thm:reciprocal}, we characterize the value of $\mathbf{z}$:

\begin{lem}\label{lem:system}
	Let $\mathbf{z}\in \mathcal{S}$. Then, $\mathbf{z}$ is a solution to the following system of equations, for all $0 \leq i, j \leq N_\varepsilon-1$ and $C, D \in \Pk$:

	% \begin{multline}\label{eq:system_0}
	% 	\frac{1}{p_C L_C} \frac{\lvert C \rvert z_{i,C}}{\sum_{k,U} \ell_\varepsilon(a_k, a_i) \lvert C \cap U \rvert z_{k,U}} \hfill \\
	% 	= \frac{1}{p_D L_D} \frac{\lvert D \rvert z_{j,D}}{\sum_{k,U} \ell_\varepsilon(a_k, a_j) \lvert D \cap U \rvert z_{k,U}}.
	% \end{multline}
	
	\begin{multline}\label{eq:system_0}
		\frac{1}{p_C L_C} \frac{\lvert C \rvert z_{i,C}}{\sum_{k,U} \ell_\varepsilon(a_k, a_i) \lvert C \cap U \rvert z_{k,U}} = \frac{1}{p_D L_D} \frac{\lvert D \rvert z_{j,D}}{\sum_{k,U} \ell_\varepsilon(a_k, a_j) \lvert D \cap U \rvert z_{k,U}}.
	\end{multline}

\end{lem}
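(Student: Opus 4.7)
The goal is to show that every $\mathbf{z} \in \mathcal{S}$ equalizes the family $\{r_{i,C}\}$, i.e., all values $r_{i,C}(\mathbf{z})$ coincide. The plan is to recognize the max–min problem defining $\mathcal{S}$ as a Collatz--Wielandt problem for a nonnegative matrix and to invoke Perron--Frobenius theory.

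Introduce the nonnegative square matrix $A$ with rows and columns indexed by pairs $(i,C)$, defined by
\begin{equation*}
A_{(i,C),(k,U)} = \frac{p_C L_C \varepsilon^2 \, \ell_\varepsilon(a_k, a_i) \, \lvert C \cap U \rvert}{\lvert C \rvert}.
\end{equation*}
A direct computation shows $(A\mathbf{z})_{i,C} = z_{i,C}/r_{i,C}(\mathbf{z})$, whence
\begin{equation*}
\min_{i,C} r_{i,C}(\mathbf{z}) = \left(\max_{i,C} \frac{(A\mathbf{z})_{i,C}}{z_{i,C}}\right)^{-1}.
\end{equation*}
Maximizing the left side over $\mathbf{z} > 0$ is therefore equivalent to minimizing the quantity inside the reciprocal on the right, and the Collatz--Wielandt formula for nonnegative matrices yields
\begin{equation*}
\sup_{\mathbf{z} > 0} \min_{i,C} r_{i,C}(\mathbf{z}) = \frac{1}{\rho(A)},
\end{equation*}
where $\rho(A)$ is the spectral radius of $A$.

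Next I would verify that $A$ is irreducible. Under the mild assumption $\ell_\varepsilon(a_k, a_i) > 0$ for all cells (which follows from the positivity of $\ell$ on the compact torus), irreducibility reduces to strong connectedness of the class-intersection graph on $\Pk$. Since $\lvert C \cap \{1, \ldots, K\}\rvert = \lvert C \rvert > 0$ for every $C \in \Pk$, each $(i, C)$ admits a direct $A$-edge to every $(k, \{1, \ldots, K\})$, and from there to every $(j, U)$; so the directed graph of the nonzero entries of $A$ is strongly connected in at most two steps.

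Perron--Frobenius then yields a unique (up to positive scaling) strictly positive eigenvector $\mathbf{z}^\star$ with $A\mathbf{z}^\star = \rho(A)\mathbf{z}^\star$, and guarantees that the Collatz--Wielandt minimum is attained only at positive multiples of $\mathbf{z}^\star$. Consequently every $\mathbf{z} \in \mathcal{S}$ is a positive multiple of $\mathbf{z}^\star$, so $(A\mathbf{z})_{i,C}/z_{i,C} = \rho(A)$, i.e., $r_{i,C}(\mathbf{z}) = 1/\rho(A)$ for all $(i,C)$. Equating $r_{i,C}(\mathbf{z})$ and $r_{j,D}(\mathbf{z})$ and cross-multiplying the factors of $\varepsilon^2$ gives exactly the system~\eqref{eq:system_0}. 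The only nontrivial step is the irreducibility check; once that is secured, the rest is a direct application of Collatz--Wielandt and Perron--Frobenius.
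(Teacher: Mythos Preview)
Your proof is correct and takes a genuinely different route from the paper's. The paper argues by direct monotonicity: assuming the minimum of the $r_{i,C}(\mathbf{z})$ is attained at some index $(i_0,C_0)$, it observes that $r_{i_0,C_0}$ is non-increasing in every other coordinate $z_{j,D}$ while $r_{j,D}$ is increasing in its own coordinate; hence if $r_{j,D}(\mathbf{z})>r_{i_0,C_0}(\mathbf{z})$ strictly, one could lower $z_{j,D}$ slightly to raise the minimum, contradicting maximality of $\mathbf{z}$. You instead recast the max--min as the Collatz--Wielandt characterization of $\rho(A)$ for the nonnegative matrix $A$ and invoke Perron--Frobenius to identify $\mathcal{S}$ with the Perron eigenray. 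The spectral route is heavier machinery but buys more: it shows at once that $\mathcal{S}$ is a single ray, gives the optimal value as $1/\rho(A)$, and makes the irreducibility hypothesis explicit. That hypothesis is in fact implicitly needed in the paper's argument as well, since the ``decrease $z_{j,D}$'' move only \emph{strictly} raises $r_{i_0,C_0}$ when $\ell_\varepsilon(a_j,a_{i_0})\,\lvert C_0\cap D\rvert>0$; when $C_0\cap D=\emptyset$ the direct argument would require a chaining step that is exactly irreducibility of $A$. So your approach is not merely different but arguably tightens the logic, at the cost of assuming strict positivity of $\ell$ on the torus.
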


\begin{proof}

Let $\mathbf{z}$ be such that:

\begin{equation*}
	\mathbf{z} \in \argmax_{\mathbf{y} \in \R_+^{N_\varepsilon\times 2^K-1}} \min_{i,C} r_{i,C}(\mathbf{y}).
\end{equation*}

To prove this result, we use the maximality of $\mathbf{z}$. Without loss of generality, let us assume that the minimum over $i,C$ of $r_{i,C}(\mathbf{z})$ is reached for $i = 0$ and $C = \{1\}$. We know that the function $r_{0, \{1\}}$ is a decreasing function of $z_{i, C}$ for all $1 \leq i \leq N-1$ and $C \neq \{0\}$. Hence, these values have to be minimal in order to maximize $r_{0, \{1\}}$.

By definition, for all $1 \leq j \leq N_\varepsilon - 1$, $C \neq \{1\}$, we have $r_{0,\{1\}}(\mathbf{z}) \leq r_{i,C}(\mathbf{z})$.
But, for $i \geq 1$ and $C \neq \{0\}$, $r_{i, C}$ is an increasing function of $z_{i, C}$. Thus, the only possible value for $\mathbf{z}$ is such that:

\begin{equation*}
	r_{0,\{1\}}(\mathbf{z}) = r_{i, C}(\mathbf{z}),
\end{equation*}

\noindent which concludes the proof.

\end{proof}

Finally, we know that $\mathbf{z}$ such that for all $i, C$, $z_{i,C} = p_C L_C$ is a solution to~\eqref{eq:system_0}. Using Lemma~\ref{lem:instability}, the system is unstable if:

\begin{align*}
	\lambda p_{C^\star} \varepsilon^2 \geq& \frac{1}{L_{C^\star}} \frac{\lvert C^\star \rvert p_{C^\star}L_{C^\star}}{\sum_{k,U}\ell_\varepsilon(a_k, a_{i^\star}) \lvert C^\star \cap U \rvert p_U L_U} \\
	\overset{(a)}{=}& \frac{\lvert C^\star \rvert p_{C^\star}}{\frac{1}{\varepsilon^2}\langle \ell_{\varepsilon, \D} \rangle \frac{\lvert C^\star \rvert}{K} \sum_U \lvert U \rvert p_U L_U} \\
	=& \frac{\varepsilon^2 p_{C^\star} K}{\langle \ell_{\varepsilon, \D} \rangle \frac{\lvert C^\star \rvert}{K} \sum_U \lvert U \rvert p_U L_U},
\end{align*}

\noindent where $(a)$ uses Lemma~\ref{lem:symmetric} with the vector $(p_C L_C)_{C \in \Pk}$ and the square torus property of $\D$. Setting $\mathfrak{L} = \sum_U \lvert U \rvert p_U L_U$  and rearranging the equation gives us:

\begin{equation*}
	\lambda \geq \frac{K}{\langle \ell_{\varepsilon, \D} \rangle \mathfrak{L}},
\end{equation*}

\noindent which is the intended condition for $\lambda$ and concludes the proof.

\end{document}